\renewcommand{\cite}{\citet}
\newtheorem{theorem}{Theorem}   
\newtheorem{lemma}[theorem]{Lemma}
\newtheorem{remark}[theorem]{Remark}
\newtheorem{definition}[theorem]{Definition}
\newtheorem{example}[theorem]{Example}
\newtheorem{assumption}[theorem]{Assumption}
\DeclarePairedDelimiter\abs{\lvert}{\rvert}
\DeclarePairedDelimiter\norm{\lVert}{\rVert}
\DeclareMathOperator*{\argmin}{arg\,min}
\newcounter{samCounter}
\title{Mean-Field Generalisation Bounds for Learning Controls in Stochastic Environments}
\author{Boris Baros\thanks{Mathematical Institute, University of Oxford, Oxford OX2 6GG, UK,
 {\tt \{boris.baros, samuel.cohen, christoph.reisinger\}@maths.ox.ac.uk}} \and Samuel Cohen\footnotemark[1] \and Christoph Reisinger\footnotemark[1]}
 \date{}
\begin{document} 

\maketitle
\begin{abstract}
    We consider a data-driven formulation of the classical discrete-time stochastic control problem. Our approach exploits the natural structure of many such problems, in which significant portions of the system are uncontrolled. Employing the dynamic programming principle and the mean-field interpretation of single-hidden layer neural networks, we formulate the control problem as a series of infinite-dimensional minimisation problems. When regularised carefully, we provide practically verifiable assumptions for non-asymptotic bounds on the generalisation error achieved by the minimisers to this problem, thus ensuring stability in overparametrised settings, for controls learned using finitely many observations. We explore connections to the traditional noisy stochastic gradient descent algorithm, and subsequently show promising numerical results for some classic control problems.
\end{abstract}

\section{Introduction}
\subsection{Motivation}
When solving stochastic control problems, one is often limited by the challenge of specifying realistic model dynamics of the involved processes. Parametric approaches to estimating dynamics introduce model error, while `model-free' approaches typically suffer from extreme curse of dimensionality constraints. The development of reliable machine-learning based methods for stochastic control is therefore of significant practical interest.

In this paper, we focus on problems where a decision maker faces a \emph{stochastic environment}, that is, where they interact with a system with unknown and uncontrolled stochastic dynamics, which, together with their control, induce a controlled state process and costs. Examples of this include optimal investment for a small investor -- here the stochastic dynamics of assets are uncontrolled and unknown, the investor chooses a strategy based on past observations, and together these generate a wealth process which must be optimised. A second example is aerial navigation in the presence of uncertain weather -- the weather is unaffected by the navigation policy chosen, while the navigator must account for uncertainties in their planning, and the resulting flight-plan needs to be optimised. In both these cases, the stochastic environment is naturally high-dimensional and may not be Markovian, and so is challenging to model statistically using finitely many observations.

We consider the setting where we have access to a finite number of i.i.d.~samples of trajectories of the stochastic environment, that is, historical paths which the environment has taken and which can be used to model future behaviour. Rather than using these to build an explicit statistical model of the environment, we investigate learning controls by direct optimisation against these historical scenarios, recasting the problem as empirical risk minimisation. As our actions can depend on the environment in a complex manner, it is natural to describe them using overparametrised models, such as neural networks. This overparametrised setting has been shown in \cite{overlearn} to suffer from unavoidably poor generalisation, due to information leakage from using full trajectories of the stochastic environment for training. In particular, overparametrised models learn to anticipate the training data, thus leading to poor out-of-sample performance. Consequently, the in-sample error does not serve as a good proxy for the out-of-sample error.

These statistical learning issues are the focus of this work. We consider a regularised variant of the empirical risk minimisation approach, modified to consider a data-driven stochastic control setting. Exploiting the benefits of dynamic programming and the mean-field interpretation of one-hidden layer neural networks, we are able to recast the problem as a backwards inductive series of infinite-dimensional minimisation problem. This allows gradient-based training to be viewed as a dynamical system in the space of measures, which is amenable to mathematical analysis. Based on this perspective, we demonstrate that entropy regularisation induces stability of generalisation of provably unique minimisers to the problem. 

Furthermore, we demonstrate that this formulation is also practical, by presenting an algorithm for its minimisation in the case of one-hidden layer neural networks. We also link this problem to the unregularised control problem, by deriving a scaling of the regularisation term which leads to a balance of bias and generalisation error for moderately large samples.

\subsection{Current Literature}
Sequential (discrete-time) decision-making problems, in particular those under uncertainty, arise ubiquitously across sectors, emerging in problems spanning engineering, biology, finance, transport, and beyond. As such, this class of problems has been studied extensively by a variety of disciplines with differing motivations, leading to significant advances both theoretically and computationally. For this reason, any literature review will necessarily be incomplete.

On the theoretical side, a rich theory exists surrounding existence and uniqueness of solutions (\cite{bertsekasShreve}), as well as attempts to weaken standard assumptions such as time consistency and Markovian dynamics (see, for example, the recent works \cite{me_myself_I,control_pham}). Whilst this direction provides tools necessary for computational methods, it fails to answer questions concerning modelling. Moreover, solutions are often intractable (\cite{stoch_control_survey}). These issues naturally lead to considering statistical techniques, which may shed light on real applications. 

Many \emph{simulation-based iterative methods} have since emerged to combine these fields, under the various names of reinforcement learning, approximate dynamic programming, neurodynamic programming and Monte-Carlo style algorithms; a selection of references includes \cite{RL_book, DL-SC, ML_SC, ent_christoph}. Whether based on machine learning or more statistical in nature, these estimation procedures typically depend on interactive access to stationary real-world systems or pre-calibrated simulators, in order to provide sufficient data for experimentation and learning. In addition, their performance often deteriorates with dimension, due to Bellman's `curse of dimensionality'.

In practice, \emph{synthetic data generation for large time-series models} may be difficult (\cite{synthetic}), particularly as time-series data are rarely stationary. Therefore, we often operate in settings with relatively small training sets, and training generators with good general performance is challenging. This motivates the main goal of this paper, which is to learn high-quality decision rules in high-dimensional settings with little training data. 

In dealing with the high-dimensional and non-linear aspects of such problems, it is common to \emph{parametrise controls with neural networks}, due to their desirable function approximation properties and dimensionality reduction capabilities (\cite{DL-SC,DL-finance, buehler2019deep}). The algorithm we propose is in this broad class, and is similar to the NNcontPI method in \cite{algo_paper_1, algo_paper_2}.

However, overparametrised sequential decision-making problems exhibit \emph{overlearning} (\cite{overlearn}), whose effects are parallel to what occurs with overfitting in classical supervised learning problems -- out-of-sample behaviour is unstable, and, in particular, we cannot use in-sample error as a good estimator for the out-of-sample error. This connects to the increasingly well-understood statistical properties of large neural networks: various models exist to analyse the training and out of sample performance of these methods, from more classical techniques such as Rademacher complexity (\cite{rad_gauss,overlearn, vapnik}) to highly specialised neural network models of learning, such as the neural tangent kernel (\cite{NTK}), random feature models (\cite{rand_features}), and mean-field analysis.

We focus on a \emph{mean-field formalism}, as in \cite{langevin, mean_field_mei, mean_field_OG,mean_field_games,mean-field-depth}. These  exhibit surprising connections to traditional noisy stochastic gradient descent algorithms via McKean--Vlasov SDEs and propagation-of-chaos results. We regularise this learning system using relative entropy, to formulate a minimisation problem in the space of probability measures over neural network parameters. In some sense, this is similar to the relaxed control approach considered in \cite{Wang2020RLContinuousTime, policy_entropy,ent_christoph, ent_christoph_2}, however our approach is fundamentally nonlinear, and ultimately yields classical feedback controls.
 
Building on these insights, we study the \emph{generalisation properties} of learned controls in a regularised overparametrised regime. This is related to unexpected contradictions of the classical bias-variance tradeoff (\cite{bias_variance, double_descent, empirical_ge, leave_one_out}).  In particular, the mean-field methods we consider have desirable generalisation bounds (\cite{sam_ge, NitandaWuSuzuki2021ParticleDualAveraging}), which we can apply in a control context. 
This is motivated by the view that, in the context of large control problems, stability of actions at a small cost in performance is often more desirable than an unstable algorithm which performs optimally in-sample.

\subsection{Main Contributions}
\par
Our contributions are as follows:
\begin{itemize}
    \item 
    We formalise a new data-driven framework for computationally solving finite-horizon stochastic control problems, inspired by recent results from mean-field neural networks. By lifting into an infinite-dimensional measure space and regularising with relative entropy, we derive a well-posed problem with a unique solution. 
    \item 
    We demonstrate general conditions (importantly, encompassing  nonlinearities in state dynamics and standard neural network activation functions) under which there exists an upper bound on the generalisation error of order $n^{-1}$, where $n$ is the size of the training dataset. This extends recent results from the mean-field approach in supervised learning (\cite{sam_ge}) to that of stochastic control by incorporating dynamic programming. 
    
    Importantly, to the best of our knowledge, this presents the first set of results to guarantee out-of-sample performance in data-driven stochastic control settings with low sample size. 
    \item 
    Using our $n^{-1}$ upper bound on generalisation error, we demonstrate a suitable scaling for the regularisation strength under which both bias and generalisation are bounded from above by terms of order $n^{-\frac{1}{2}}$. 
    \item 
    Leveraging results regarding Mean-Field Langevin Dynamics (MFLD) and Propagation of Chaos (PC), we are able to explicitly provide a training mechanism for the problem. Noting that the minimisers to our problem are unique, this demonstrates a guarantee on both bias and generalisation of our formulation. This is in contrast to standard applications of neural networks to data-driven stochastic control problems, where the possibilities of local minima ensure no such guarantee. 
\end{itemize}

The remaining paper is structured as follows. In Section \ref{problem} we will present the problem under consideration from both a classical stochastic control perspective and a data-driven empirical risk minimisation perspective. We briefly outline the overlearning phenomenon (Section \ref{overlrn}) associated with such an approach. We consider the parameterisation of controls using neural networks (Section \ref{NN parametrisation section}), and then reformulate the problem in terms of learning optimal distributions in the space of probability measures, rather than the standard finite-dimensional setting of learning optimal weights (Section \ref{Markov definition}). This alternative formulation involves exploiting the dynamic programming principle, invoking the mean-field interpretation of the one-hidden layer neural network parametrisation of actions, and finally adding an entropy regularisation term to the objective function. We finish this section by presenting key assumptions on the inputs to the problem in Assumption \ref{verifiable assumptions}.

In Section \ref{minimisers exist section}, we demonstrate a series of first-order conditions which characterise the unique minimisers to the problem (Theorem \ref{minimisers}). This characterises learning as evaluating a specific map from the empirical measure of the training data, whose generalisation error we can then analyse. In Section \ref{generalisation as stability section} we begin by noting (Theorem \ref{bousquet ge result}) that the generalisation error can be reformulated in terms of the stability of the expected empirical loss under resampling one point of the training data. We exploit this formulation to write the generalisation error in terms of linear functional derivatives of running costs and the minimising parameter measures, detailed in Theorem \ref{generalisation error}

We next demonstrate (Section \ref{bounding generalisation error section}) that our assumptions lead to an upper bound on the generalisation error of the minimising parameter measures. This upper bound is of order $n^{-1}$ (Theorem \ref{1/n theoretical}), where $n$ is the size of the training dataset, and is finite under reasonable assumptions regarding the moments of the data-generating distribution.  

Moving on to the computational aspects of the problem, in Section \ref{computation} we outline results which justify an extension of the traditional noisy stochastic gradient descent algorithm as a suitable algorithm for approximating the minimising parameter measure (Algorithm \ref{gibbs vector algo}). Section \ref{numerics} concludes the work by considering two concrete applications of our method. We demonstrate that our algorithm is feasible, empirically exhibits the theoretical behaviours demonstrated in earlier sections, and retains good in-sample performance (thus managing a balance of bias and stability, as discussed in Section \ref{balancing bias + stability}). 
\section{Problem Formulation}\label{problem}
\subsection{An Empirical Risk Minimisation Problem}
We focus our attention on a common instance of a classical control problem, where components of the state are uncontrolled, as considered in \cite[Example 2.2]{ndp} and \cite{overlearn}.
We call these components of the process the \textit{stochastic environment}.
For motivation and concreteness, we begin by giving the following case:

\begin{example}\label{flights}
    \par
    Consider the problem of navigating a plane from a start point to a destination. The controller attempts to specify an optimal sequence of velocities, where optimality is described by a combination of objectives, such as avoiding obstacles, minimising fuel usage, and reducing travel time. However, since the weather is random -- in particular the wind speed and direction --
    so will be the fuel consumption or travel time, and we are led to minimising an objective in expectation -- a typical stochastic control problem. We formalise this problem below in \eqref{full problem}.
    \par
    The approach we investigate exploits the fact that our control (the chosen velocity sequence) negligibly affects the weather. Therefore, we may view this part of the state vector as an uncontrolled process, the stochastic environment.
    \par 
    Supposing we have access to i.i.d.~realisations of the stochastic environment, and a model for the fuel consumption of the flight, we could evaluate controls offline and use the resulting performance as an estimator of the expected performance under the real distribution of the stochastic environment. 
    \par
    Traditional analytical methods would require explicit modelling of the weather, which is clearly difficult and subject to model uncertainty. Our approach circumvents this issue, the new challenges being how we might learn from data in such a context, and assess the performance out of sample. 
    The modeling requirement is reduced to the fuel consumption -- a considerably easier prospect (\cite{fuel_models}).
\end{example}
\par
Below we formalise such a setting. We consider a state process $X$, controlled by a process $U$, and dependent on an (uncontrolled) stochastic environment vector\footnote{In some contexts, the stochastic environment may be called the innovations process, and is often assumed to be a white noise process. We will instead allow $Z$ to have general (unknown) dynamics $\nu_{\mathrm{pop}}$.} $Z = \{Z_s\}_{s > 0}$. These take values in corresponding sets $\mathcal{X}, \mathcal{U},\mathcal{Z}^{T}$, which we assume to be subsets of (finite dimensional) Euclidean spaces. In this paper we will focus on the discrete-time case with a finite horizon $T$. The aim of the controller is then to specify some sequence of feedback controls $u_t:\mathcal{X}\to \mathcal{U}$, $t\in\mathbb{T} := \{0,\ldots, T\}$ and where measurability is implicit, to minimise the expectation of some cost functional. 
\par
We suppose that the stochastic environment $Z$ is distributed according to some unknown law $\nu_{\mathrm{pop}}$ on $\mathcal{Z}^T$, and generates a filtration $\mathbb{F} := (\mathcal{F}_t)_{t\in\mathbb{T}}$, where $\mathcal{F}_0 := \{\Omega, \emptyset\}$, and for $t \geq 1$ we define $\mathcal{F}_t := \sigma(\{Z_{s}\}_{s\leq t})$. Given a fixed initial state $x_0\in\mathcal{X}$, the state process $X^{u}(Z) = (X_t^{u}(Z))_{t\in\mathbb{T}}$ is defined recursively via a one-step transition function,
\begin{align} X_0^{u} := x_0, \quad X_{t+1}^u := h_t(X_t^u, u_t(X_t^u), Z_{t+1}), \quad t=0,\ldots, T-1.\label{transition function} \end{align}
We observe that the resulting state process $X^{u}(Z)$ is then $\mathbb{F}$-adapted. 
\begin{remark}
    Without much additional consideration it is simple to consider the case where $x_0$ is a random variable, independent of the stochastic environment $Z$, under the assumption that the distribution of $x_0$ has polynomial moments of sufficient order\footnote{Alternatively, we can simply start our problem at time $t=1$,  set $x_1=Z_0$, and formally initialise the system at $X_0=0$.}.
\end{remark}
\par
Denote by $\mathcal{C}$ the space of (sequences of) feedback controls $u = \{u_t\}_{t=0}^{T-1}$. The controller wishes to solve 
\begin{align} \mathrm{minimise}\ u\in\mathcal{C}\mapsto \mathbb{E}_{Z\sim\nu_{\mathrm{pop}}}\Big[\sum_{t=0}^{T-1} c_t(X_t^u(Z), u_t( X_t^u)) + \Phi(X_T^{u}(Z))\Big] =: \mathbb{E}_{Z\sim\nu_{\mathrm{pop}}}[\ell(X^u(Z), u)]. \label{full problem}
\end{align}
We make the following assumptions on the current framework for the scope of this work:
\begin{assumption}\label{formulation assumptions}
    \begin{enumerate}[i.]
    \item
    For every feedback control $u\in\mathcal{C}$, $X^u(Z)$ is a Markov process.
    \item 
    Transition functions $\{h_t\}_{t=0}^{T-1}:\mathcal{X}\times\mathcal{U}\times\mathcal{Z}\to \mathcal{X}$, and costs $\Phi:\mathcal{X} \to \mathbb{R}, \{c_t\}_{t=0}^{T-1}:\mathcal{X}\times\mathcal{U}\to\mathbb{R}$ are known and continuous in all their arguments.
    \end{enumerate}
\end{assumption}
\begin{remark}
    When we view $X$ as a feature vector, Assumption \ref{formulation assumptions}(i) is essentially an assumption regarding $X$ being rich enough that it is a sufficient statistic for describing the ``state'' of the problem at a given time. Albeit at the price of adding dimensions, this can always be achieved by taking $X_t$ to include all past observations and actions as components. 
\end{remark}
\par
The data-driven aspect of our approach arises from the fact that we do not have access to the distribution of the stochastic environment a priori. Instead, we have a set of sample paths of the stochastic environment $\{Z^{(i)}\}_{i=1}^{n}$, which generates a training distribution $\nu_n := \frac{1}{n}\sum_{i=1}^{n}\delta_{Z^{(i)}}$. The controller can then recast the problem as an empirical risk minimisation (ERM) problem, minimising an unbiased estimate of the expected loss; namely, 
\begin{align} \mathrm{minimise}\ u\in\mathcal{C}\mapsto \mathbb{E}_{Z\sim\nu_n}[\ell(X^u(Z), u)] = \frac{1}{n}\sum_{i=1}^n\ell(X^u(Z^{(i)}), u).\label{ERM problem}\end{align}
We emphasise that it is the paths $Z^{(i)} = \{Z_t^{(i)}\}_{t\in\mathbb{T}}$ which are i.i.d.~(as $i$ varies) and each path is \emph{not} assumed to be an i.i.d.~sequence (as $t$ varies).
\subsection{Overlearning}\label{overlrn}
The optimisation over the training distribution in \eqref{ERM problem} is a classical problem, with many standard statistical or machine learning methods available (\cite{ndp, DL-SC, ML_SC}). Suppose we represent $u$ using some parameters $\theta \in \Theta$ -- for example, using a neural network. One concern that arises is overfitting\footnote{In \cite{overlearn} this is described in stochastic control settings as \textit{overlearning}.}, where model parameters are fitted too closely to optimise the in-sample loss \eqref{ERM problem}, leading to parameters that generalise poorly when applied to \eqref{full problem}. If we consider an increasing sequence of sufficiently rich parameter spaces $\{\Theta_k\}_{k\in\mathbb{N}}$, under fairly mild conditions on $Z$,  \cite{overlearn} prove the asymptotic result
\begin{align} \limsup_{n\to\infty}\lim_{k\to\infty} \inf_{\theta\in\Theta_k} \frac{1}{n}\sum_{i=1}^{n}\ell(X^{u_\theta}(Z^{(i)}), u_\theta) \leq \inf_{u\in\mathcal{A}_{\mathrm{ant}}} \mathbb{E}_{Z}[\ell(X^u(Z), u)] < \inf_{u\in\mathcal{C}} \mathbb{E}_{Z}[\ell(X^u(Z), u)], \label{overlearning result}
\end{align}
where $u_\theta$ is the feedback control parametrised by $\theta\in\Theta_k$, and $\mathcal{A}_{\mathrm{ant}}$ denotes the space of anticipative controls, that is, controls which depend on the whole path of $Z$ (including future values), rather than just the current state.
\par
This result highlights that, in the overparametrised setting, the minimiser of the empirical risk outperforms (in-sample) not only the feedback control minimising the expected loss, but also the anticipative control minimising the empirical risk. This behaviour arises from the fact that, despite the resulting feedback control $u_\theta$ being $\mathbb{F}$-adapted, there is information leakage from the future when training the parameters. 
\begin{example}
    \par
    If we train an investment strategy using one observation of a stock process over two periods, the trained system can identify which initial stock price changes corresponded (in our training data) with increases of stock values in the second period. In these scenarios, the strategy will aim to invest as much as possible in the stock over the second period to obtain the highest wealth possible at the terminal time. If we then test the resulting strategy out-of-sample, on a stock with the same initial price change, and the value instead decreases overall, we will catastrophically fail, and incur the worst possible loss. 
    \par
    This is an example of a control problem whose empirical risk has a degenerate minimiser, that is, the optimal strategy would be an infinite initial investment. Importantly, it illustrates the instability of training with empirical risk in the overparametrised regime. We will demonstrate this empirically in Section \ref{numerics}. 
    \par
\end{example}
\begin{remark}
    As discussed in \cite{overlearn}, synthetic data generation is a means of curbing overlearning, demonstrating that in the underparametrised setting we see a convergence to the optimal control rather than the overlearned one. However, as discussed in Example \ref{flights}, this would require explicitly modelling the stochastic environment, which is the very issue we wish to avoid. In the following section, we introduce an adjusted version of the minimisation problem which we will demonstrate to have more desirable properties than the above minimisers. 
\end{remark}

\subsection{Control Parametrisation via Mean-Field Neural Networks}\label{NN parametrisation section}
In order to specify a training mechanism, we typically need to parametrise feedback controls. Due to their desirable function approximation properties, clear training procedure, and the mean-field analytic tools available for investigating their learning, we employ scaled one-hidden layer neural networks (also known as mean-field neural networks) to parametrise the feedback control at different time-points (\cite{mean_field_OG,mean_field_mei}).

Concretely, we fix a one-hidden layer neural network with $r$ hidden neurons to give a feedback control $u_{\theta^{(r)}}: \mathcal{X}\to\mathcal{U}$, via 
\begin{align} u_{\theta^{(r)}}(X) = \frac{1}{r}\sum_{j=1}^{r} a_{j}\sigma(w_{j}\cdot {X} + b_{j}) = \frac{1}{r}\sum_{j=1}^{r}\phi(\theta_{j}, X), \label{NN parametrisation}\end{align}
where $\theta_{j} := (a_{j}, w_{j}, b_{j})\in\Theta$, $\phi(\theta_{j}, X) := a_{j}\sigma(w_{j} \cdot X + b_{j}),$ and $\theta^{(r)}$ denotes the collection $\{\theta_{j}\}_{j=1}^{r}$.
Noting that we can write 
\[ \frac{1}{r}\sum_{j=1}^{r} \phi(\theta_{j}, X) = \int_{\Theta} \phi(\theta, X)\mathfrak{m}^r(\mathrm{d}\theta),\]
where $\mathfrak{m}^r := \frac{1}{r}\sum_{j=1}^{r} \delta_{\theta_{j}}$ is the empirical distribution of the parameters, we can instead proceed with a set of measure-parametrised feedback controls $u_{m}:\mathcal{X}\to \mathcal{U}$, where
\[ u_{m}(X) := \int_{\Theta} \phi(\theta,X)m(\mathrm{d}\theta) = \mathbb{E}_{\theta\sim m}[\phi(\theta,X)]. \]
From mean-field theory, the behaviour of these measure-parametrised controls is approximated well by the neural network parametrised case, with $r$ sufficiently large (\cite{mean_field_OG}). We will discuss this connection further in Section \ref{computation}, but for now we proceed with this infinite-dimensional parametrisation, affording us a rich hypothesis space. This allows us to view gradient-based training methods as dynamical systems in the space of probability measures over $\Theta$, which we may analyse using infinite-dimensional calculus. 
\par
As we will discuss further in Remark \ref{global vs sliced}, it will be convenient to separate the controls at different times, leading us to use a length $T$ vector $\mathbf{m} = (m_t)_{t=0}^{T-1},$ corresponding to the control at each time $t$.
For the sake of notational simplicity, from here onwards we often omit writing the control functions $u_m$, instead just writing $m$. For instance, the running cost $c_t(x, u_{m_t}(x))$ becomes $c_t(x, m_t)$. 
\subsection{Approximate Dynamic Programming and Entropy Regularisation}\label{Markov definition}
Recalling that the state process $X$ is a (potentially time-inhomogeneous) controlled Markov process for every measure-vector $\mathbf{m}$, we introduce some standard notation for the resulting Markov Decision Process (MDP). By $\{ P_t^{\mathbf{m}}(x, \mathrm{d}x')\}_{u\in\mathcal{C}, x\in\mathcal{X}, t\in\mathbb{T}}$ we denote the family of transition probabilities associated to the Markov process $X$, explicitly defined as $P_t^\mathbf{m}(x, \mathrm{d}x') := \mathbb{P}[h_t(x, u_{m_t}(x), Z_{t+1})\in \mathrm{d}x'].$ For any measurable function $F$ we have the pushforward notation
\[P_t^{\mathbf{m}}F(x) = \int F(x')P_t^{\mathbf{m}}(x, \mathrm{d}x') = \mathbb{E}[F(h_t(x, u_{m_t}(x), Z_{t+1}))] = \mathbb{E}[F(X_{t+1}^{\mathbf{m}}(Z))|X_t^{\mathbf{m}}(Z) = x],\]
where we observe that the $P_t^{\mathbf{m}}$ implicitly depend on the (unknown) law $\nu_{\mathrm{pop}}$ of $Z$.
Note the similarity of our MDP to that of \cite{algo_paper_1}, who instead assume the stochastic environment at each time is identical and independently distributed, and the transition function is time-constant. 
\par
Denoting the optimal value by 
\[ V_0(x_0) := \inf_{\mathbf{m}}\mathbb{E}_{Z}[\ell(X^\mathbf{m}(Z), \mathbf{m})],\]
the dynamic programming principle ensures that we may solve for $V_0(x_0)$ via backwards induction with terminal condition $V_T(x) = \Phi(x)$, followed by the system 
\[ \begin{cases}
      Q_t(x, m_t) := c_{t}(x, u_{m_t}(x)) + P_t^{\mathbf{m}} V_{t+1}(x), & x\in\mathcal{X} \\
      V_t(x) := \inf_{m_t} Q_t(x,m_t).
    \end{cases} \]
We call $Q_t$ the optimal state-measure value function and $V_t$ the optimal value function, similarly to \cite{RL_book}. 
It is a standard result that solving this recursion (assuming the infimum is attainable) provides an optimal control in feedback form $u^*_{\mathbf{m}}$, that is,
    \[ V_0(x_0) = \mathbb{E}_{Z}[\ell(X^{u^*_{\mathbf{m}}}(Z), u^*_{\mathbf{m}})].\]
\par
\begin{remark}\label{global vs sliced}
    It is worth noting that we have transitioned from aiming to learn a single control for all time points to learning separate controls at each time point -- specifically, we fit separate neural networks at each time $t$. 
    \par
    Others have proposed using one global function to represent controls (\cite{DL-SC, em}), followed by a single minimisation problem, rather than the dynamic sequence we propose. Whilst this may seem more desirable for large $T$, achieving a global minimiser with sufficient flexibility requires a highly complex model. In the case of neural network parametrisations, this can lead to exploding gradient issues (see \cite{exploding_gradients}). 
    \par
    In addition to this, we will demonstrate that unique minimisers in the dynamic approach can be guaranteed under more general convexity assumptions than when considering the global problem.
\end{remark}

One key issue arises from the fact that we do not have explicit access to the transition probabilities, rendering the problem unsolvable. Since the stochastic environment $Z$ is unaffected by the state function $X$ and the chosen control $u$, we can instead evaluate state trajectories for given controls over some i.i.d.\ training set $\{Z^{(i)}\}_{i=1}^n$. Taking the cost-to-go from some initial $x$ at time $t$ along these trajectories, and some chosen control $u$, provides an approximation of the Q-functions as follows. 
\par 
Given control measures $\mathbf{m} = (m_t)_{t=0}^{T-1}$, a training sample path $Z = (Z_t)_{t=1}^{T}$,  and state $x\in\mathcal{X}$, we define the \textit{empirical Q-functions} as
\begin{align} \widehat{Q}_t(x, m_t,m_{t+1},\ldots, m_{T-1},  Z) := \sum_{s\geq t} c_s^*(X_s^{t,x, \mathbf{m}}(Z), m_s),\label{ADP problem}\end{align}
where $X_s^{t,x,\mathbf{m}}(Z)$ denotes the state process defined by \eqref{transition function}, with initial condition $X_t^{t,x,\mathbf{m}}(Z) = x$, and following controls from $\mathbf{m}$ thereafter, and we use running costs $c_t^*$, defined by 
\[ c_t^*(X_t^{\mathbf{m}}(Z), m_t) := \begin{cases}
        c_t(X_t^{\mathbf{m}}(Z), m_t) & t < T-1, \\
        c_{T-1}(X_{T-1}^{\mathbf{m}}(Z), m_{T-1}) + \Phi(h_{T-1}(X_{T-1}^{\mathbf{m}}(Z), u_{m_{T-1}}(X_{T-1}^{\mathbf{m}}(Z)), Z_{T})) & t = T-1,
    \end{cases}
    \]
which will allow us to simplify notation when needed. In \cite{algo_paper_1} an approximate dynamic programming approach is analysed, where -- in our measure-controlled formulation -- the controller solves the upper-triangular series of minimisation problems
\[ m_t \mapsto \frac{1}{n}\sum_{i=1}^{n} \widehat{Q}_t(X_t^{\mathrm{ref}}(Z^{(i)}), m_t, m_{t+1:T-1}^*, Z^{(i)}), \quad t = T-1, \ldots, 0,\]
where the $m_{t+1:T-1}^* := \{m_s^*\}_{s=t+1}^{T-1}$ are the minimising measures for future steps (previously determined by backwards induction), and $X_t^{\mathrm{ref}}$ denotes the state controlled up to time $t$ by some pre-specified `reference control'. We adopt the reference control in order to eliminate dependence on the chosen measures for earlier timesteps, thus introducing the aforementioned upper-triangular structure of the problem. 
\begin{remark}
    It is important to note that we focus our attention on the generalisation error, that is, how much worse our control will perform out-of-sample than in-sample. This is important for our understanding of the approximate dynamic programming, as is highlighted by the following scenario: 
    \par
    Suppose the reference control results in $X^{\mathrm{ref}}_2 = x_2$, for some fixed value $x_2$. When training the control measure $m_2$, we cannot expect to have good performance for other values of $X_2$ as these are not explored during training. If we now consider the next step, where we train a control at time $t = 1$, which implicitly depends on the control $m_2$, we may not obtain a near-optimal solution to the overall MDP. 
    \par
    However, what we will show is that, with the appropriate regularisation, the generalisation error remains small -- the controls $m_1, m_2$ which we construct will continue to perform comparably in and out-of-sample, despite not being optimised at time $t = 2$ for the state $X_2^{m_1, x_1}$ which we obtain by following $m_1$ from state $x_1$. That is, our fitted control continues to generalise well, but approximate dynamic programming will not (without further assumptions) ensure convergence to an optimal control. 
    \par
    This highlights the importance of using a reference control (which may be randomised) that causes $X^{\mathrm{ref}}$ to explore the space well (as visualised in Appendix \ref{zermelo} for a navigation problem from Section \ref{numerics}), as this encourages controls to be learned which perform well when started from a variety of states. 
\end{remark}
What we have suggested so far minimises the original loss $\ell$ over some training set $\{Z^{(i)}\}_{i=1}^{n}$ with a rich action space parametrised by $\mathcal{P}(\Theta)^T$. Left like this, any minimisation will lead to overlearning, now at $T$ separate time-points. Inspired by results in the setting of supervised learning (\cite{sam_ge}), we add an entropy regularisation term to our loss function to combat the overlearning effect from \eqref{overlearning result}. 
\par

\par
Fixing some initial reference control, which generates state process $X^{\mathrm{ref}}(Z)$, we solve a lower-triangular series of backwards inductive minimisation problems, aiming to minimise, for each $t$, the map
\begin{align}\label{entropy_minimisation_problem} m_t \in \mathcal{P}_2(\Theta) \mapsto \mathbb{E}_{Z\sim\nu_n}\big[\widehat{Q}_t(X_t^{\mathrm{ref}}(Z), m, Z)\big] + \frac{\sigma^2}{2\beta^2}\mathrm{KL}(m_t||\gamma^\sigma),
\end{align}
where:
\begin{itemize}
    \item 
    $\mathcal{P}_2(\Theta)$ denotes the space of finite-variance probability measures over parameter space $\Theta$. See Appendix \ref{prereq} for more information regarding such spaces. 
    \item 
    For notational simplicity, we have omitted future control measures in $\widehat{Q}_t$.
    Where all measures are important we will sometimes use the notation $\widehat{Q}_t(x, m_t, \ldots, m_{T-1}, z)$ as in \eqref{ADP problem}, but the definition stays the same.  
    \item 
    $\sigma, \beta > 0$ are regularisation hyperparameters. We will see in Secton \ref{computation} that these values decouple and control different aspects of our eventual algorithm.
    \item 
    The Kullback--Leibler divergence is defined as 
    \begin{align*}
        \mathrm{KL}(m'||\gamma^\sigma) := 
        \begin{cases}
            \int_{\Theta} \log\Big(\frac{m'(\theta)}{\gamma^\sigma(\theta)}\Big)m'(\theta)\mathrm{d}\theta, \\
            \infty \quad \mathrm{otherwise}.
        \end{cases}
    \end{align*}
    Note that we will only work with finite-entropy measures, so will somewhat abuse notation and simply write $m(\theta)$ for the density of measure $m$ at $\theta$.
    \item 
    The Gibbs measure $\gamma^{\sigma}$ has density $\gamma^{\sigma}(\theta) = \frac{1}{F}\exp\Big\{-\frac{1}{\sigma^2} \Gamma(\theta)\Big\},$ where $\Gamma:\Theta\to\mathbb{R}$ is a regularisation potential, and $F$ is a normalisation constant.
\end{itemize}

\par
We will demonstrate useful properties of the minimisers of \eqref{entropy_minimisation_problem} under the following assumptions. Aside from Assumption \ref{verifiable assumptions}(i) (which we discuss in Remark \ref{rem:verifiable assumptions discussion}), these assumptions are easily verifiable in practice, being assumptions only regarding the inputs to the problem.
\begin{assumption}\label{verifiable assumptions}
    Concerning the costs and the state dynamics, for all $x\in\mathcal{X}, u\in\mathcal{U}, z\in\mathcal{Z}, t \in \mathbb{T}$, we assume there exists some $C > 0 $ such that:
    \begin{enumerate}[i.]
        \item 
        The empirical Q-functions $\widehat{Q}_t:\mathcal{X}\times\mathcal{P}_2(\Theta) \to \mathbb{R}$ are nonnegative, and for each $x\in \mathcal{X}$ are convex and $\mathcal{C}^2$ with respect to $m\in \mathcal{P}_2(\Theta)$ (see Appendix \ref{prereq} for a precise definition);
        \item
        The running costs $\{c_t\}_{t=0}^{T-1}$ and terminal cost $\Phi$ satisfy a quadratic growth condition,
        \[ \abs{c_t(x, u)} \leq C(1+\norm{x}^2 + \norm{u}^2), \quad \abs{\Phi(x)} \leq C(1+\norm{x}^2);\]
        \item 
        The derivatives of the running costs and terminal cost exist and satisfy linear growth conditions. That is, 
        \[ \norm{\nabla_x c_t(x, u)},\norm{\nabla_u c_t(x, u)} \leq C(1+\norm{x}+\norm{u}), \quad \norm{\nabla_x \Phi(x)} \leq C(1+\norm{x}); \]
        \item 
        The state transition functions $\{h_t\}_{t}$ satisfy a linear growth condition  
        \[ \norm{h_t(x, u, z)} \leq C(1+\norm{x}+\norm{u}+\norm{z});\]
        \item 
        The state transition functions $\{h_t\}_t$ are differentiable with respect to $x$ and $u$, and are Lipschitz continuous with Lipschitz constant $C$. 
    \end{enumerate}
    At the level of the neural network and the regularising potential, we assume that:
    \begin{enumerate}[i.]\setcounter{enumi}{5}
        \item 
        The activation function $\phi$ appearing in \eqref{NN parametrisation} satisfies $\norm{\phi(x, \theta)} \leq C(1+\norm{x})(1+\norm{\theta}^2)$;
        \item 
        For some $p \geq 4$, the regularising potential $\Gamma$ satisfies $\lim_{\norm{\theta}\to\infty}\frac{\Gamma(\theta)}{\norm{\theta}^p} = \infty$. 
    \end{enumerate}
\end{assumption}
\begin{remark}\label{rem:verifiable assumptions discussion}
We make a few comments on Assumption \ref{verifiable assumptions}:
    \begin{itemize}
        \item Assumption \ref{verifiable assumptions}(i) is somewhat restrictive, as it corresponds to the convexity of the $\widehat{Q}_t$ functions (or discrete Hamiltonian) for our problem, and is based on the interaction between the costs $c^*_t$ and the dynamics $h_t$. In the case where the $h_t$ are linear and $c^*_t(x,m)$ are convex for all $t$, it is easy to verify that this assumption is satisfied.
        
        This special case naturally occurs when we consider relaxed control problems (where the control $u$ is replaced by a probability measure over the control space, and hence the costs and dynamics are linear in $u$, and hence in its mean-field parametrisation $m$). The usual regularization methods used in these cases (see, for example, discussion in \cite{ent_christoph_2} or \cite{Wang2020RLContinuousTime}) are often designed to ensure the required convexity.
        
        In practice, this assumption does not appear critical, as mean-field training performs well in cases where the potential function is mildly not convex (see, for example, \cite{nonconvex}). 
        \item 
        Other assumptions on the growth conditions are possible, and will simply lead to differing powers in the upper bounds that we demonstrate later. 
        \item 
        Assumption \ref{verifiable assumptions}(iv, v) ensures that, under the reference control, the state process $X^{\mathrm{ref}}$ is well-defined, and that the state process is continuous with respect to changing the control. 
        \item 
        Regarding the growth conditions on the neural network (Assumption \ref{verifiable assumptions}(vi)), this assumption includes the ReLU activation function -- this is often missed when one makes smoothness assumptions instead. 
        \item 
        For the sake of proofs going forward, we assume, without loss of generality, that $C \geq 1$, and we allow it change line by line (however, $C$ will not depend on $Z$ or the chosen controls).
        \item 
        These assumptions are sufficient to ensure that \eqref{entropy_minimisation_problem} is weakly continuous when restricted to measures $m_t\in\mathcal{P}_2(\Theta)$, which are absolutely continuous with respect to $\gamma^\sigma$. 
    \end{itemize}
\end{remark}
\section{Existence and Uniqueness of Minimisers}\label{minimisers exist section}

We now characterise the minimisers of \eqref{entropy_minimisation_problem}. We denote the $\ell$-th moment of a measure $m$ by 
\[ E_m^{(\ell)} := \mathbb{E}_{\theta\sim m}[\norm{\theta}^\ell],\]
and will repeatedly make use of the fact (a consequence of H\"{o}lder's inequality) that
$\Big\Vert\sum_{i=1}^J x_i\Big\Vert^k \leq J^{k-1}\sum_{i=1}^J \norm{x_i}^k$
for $k > 1$ in order to make simplifications such as $(1+E_{m}^{(2)})^2 \leq 2(1+E_m^{(4)})$, and so on for higher powers and larger sums. This is somewhat crude, but allows significant algebraic simplification. 
\begin{theorem}\label{minimisers}
    Under Assumption \ref{verifiable assumptions}, there exists a unique vector of measures $\mathfrak{m}(\nu) = (\mathfrak{m}_t(\nu))_{t=0}^{T-1}$ simultaneously minimising the series of approximate dynamic programming problems \eqref{entropy_minimisation_problem}, which we call the Gibbs vector.
    \par
    Moreover, when $\nu \in \mathcal{P}_q(\mathcal{Z}^{T})$ for $q \geq T$, the $t$-th element of the Gibbs vector is the unique fixed point of the map $m \mapsto M_t( m, \nu)$, where $M_t: \mathcal{P}_p(\Theta) \times \mathcal{P}_q(\mathcal{Z}^T) \to \mathcal{P}_p(\Theta)$ is defined in terms of the density of its output, given by
    \begin{align} M_t(m,\nu;\mathrm{d}\theta) = \frac{1}{F_{\beta,\sigma, t}}\exp\left\{-\frac{2\beta^2}{\sigma^2}\left[\int_{\mathcal{Z}^{T}}\frac{\delta}{\delta m} \widehat{Q}_t(X_t^{\mathrm{ref}}(Z), m,Z;\theta)\nu(\mathrm{d}Z)+\frac{1}{2\beta^2} \Gamma(\theta)\right]\right\}\mathrm{d}\theta,\label{Szpruch first order condition}\end{align}
    for $t = T-1, \ldots, 0$, in which $F_{\beta,\sigma, t}$ is a normalisation constant and $p$ is the value described in Assumption \ref{verifiable assumptions}(vii).
\end{theorem}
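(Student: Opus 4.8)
The plan is a backwards induction over $t = T-1, T-2, \dots, 0$. At step $t$ the future components $(\mathfrak{m}_s(\nu))_{s=t+1}^{T-1}$ have already been constructed, so the only genuine task is to show that the single infinite-dimensional problem \eqref{entropy_minimisation_problem} — with $\nu_n$ replaced by a general $\nu\in\mathcal{P}_q(\mathcal{Z}^T)$, $q\geq T$ (which covers the empirical measure, all of whose moments are finite), and with the future measures frozen at their optimal values — has a unique minimiser, and that this minimiser is the unique fixed point of $M_t(\cdot,\nu)$. Write
\[
\mathcal{J}_t(m) := \mathbb{E}_{Z\sim\nu}\big[\widehat{Q}_t(X_t^{\mathrm{ref}}(Z), m, (\mathfrak{m}_s(\nu))_{s>t}, Z)\big] + \frac{\sigma^2}{2\beta^2}\,\mathrm{KL}(m\,\|\,\gamma^\sigma),
\]
which I will first restrict to measures absolutely continuous with respect to $\gamma^\sigma$ with finite relative entropy, since $\mathcal{J}_t\equiv+\infty$ off this set.

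\emph{Existence and uniqueness of the step-$t$ minimiser.} The data term is convex in $m$ by Assumption \ref{verifiable assumptions}(i) and this is preserved under $\mathbb{E}_{Z\sim\nu}$; $m\mapsto\mathrm{KL}(m\|\gamma^\sigma)$ is strictly convex; hence $\mathcal{J}_t$ is strictly convex, which gives uniqueness once existence is established. For existence I use the direct method. Since $\widehat{Q}_t\geq 0$ (Assumption \ref{verifiable assumptions}(i)), $\mathcal{J}_t(m)\geq\frac{\sigma^2}{2\beta^2}\mathrm{KL}(m\|\gamma^\sigma)$, so sublevel sets of $\mathcal{J}_t$ sit inside sublevel sets of $\mathrm{KL}(\cdot\|\gamma^\sigma)$, which are weakly compact (Dunford--Pettis). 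A short bootstrap upgrades this to relative compactness in $\mathcal{P}_p(\Theta)$: the Gibbs variational identity relating $\int\Gamma\,\mathrm{d}m$, $\mathrm{KL}(m\|\gamma^\sigma)$ and the differential entropy $\mathrm{Ent}(m)$, together with the Gaussian bound $\mathrm{Ent}(m)\lesssim\log(1+E_m^{(2)})$ and the growth condition $\Gamma(\theta)/\norm{\theta}^p\to\infty$ of Assumption \ref{verifiable assumptions}(vii), bounds $\int\Gamma\,\mathrm{d}m$ and hence $E_m^{(p)}$ on each sublevel set, so any minimiser automatically lies in $\mathcal{P}_p(\Theta)$. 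Weak lower semicontinuity of $\mathcal{J}_t$ on this set follows from lower semicontinuity of the KL term and the weak continuity of $m\mapsto\mathbb{E}_{Z\sim\nu}[\widehat{Q}_t]$ noted at the end of Remark \ref{rem:verifiable assumptions discussion}. The direct method yields a minimiser; strict convexity makes it unique. Call it $\mathfrak{m}_t(\nu)$.

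\emph{First-order condition and the Gibbs form.} As $\widehat{Q}_t$ is $\mathcal{C}^2$ in $m$ (Assumption \ref{verifiable assumptions}(i)) and the entropy term is Gateaux-differentiable along admissible perturbations in the interior of its domain, $\mathcal{J}_t$ is $\mathcal{C}^1$ there with
\[
\frac{\delta\mathcal{J}_t}{\delta m}(m;\theta) = \int_{\mathcal{Z}^T}\frac{\delta}{\delta m}\widehat{Q}_t\big(X_t^{\mathrm{ref}}(Z), m, Z;\theta\big)\,\nu(\mathrm{d}Z) + \frac{\sigma^2}{2\beta^2}\Big(1 + \log\frac{m(\theta)}{\gamma^\sigma(\theta)}\Big).
\]
By convexity the first-order condition is necessary and sufficient: $\mathfrak{m}_t(\nu)$ is the minimiser iff $\theta\mapsto\delta\mathcal{J}_t/\delta m(\mathfrak{m}_t(\nu);\theta)$ is $\mathfrak{m}_t(\nu)$-a.e.\ constant. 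Substituting $\gamma^\sigma(\theta)\propto\exp(-\Gamma(\theta)/\sigma^2)$ and solving for $\log m(\theta)$ gives exactly the density \eqref{Szpruch first order condition}, i.e.\ $\mathfrak{m}_t(\nu) = M_t(\mathfrak{m}_t(\nu),\nu)$. Conversely, any fixed point $m = M_t(m,\nu)$ has density proportional to $\gamma^\sigma(\theta)\exp\{-\tfrac{2\beta^2}{\sigma^2}\int\tfrac{\delta}{\delta m}\widehat{Q}_t(\cdots;\theta)\,\nu(\mathrm{d}Z)\}$; the growth estimates below bound this exponent by $C_\nu(1+\norm{\theta}^2)$, which the super-polynomial decay of $\gamma^\sigma$ (Assumption \ref{verifiable assumptions}(vii), $p\geq 4>2$) dominates, so $m\in\mathcal{P}_p(\Theta)$ with finite entropy and $m$ satisfies the first-order condition — hence $m=\mathfrak{m}_t(\nu)$. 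This shows the fixed point is unique and, simultaneously, that $M_t$ maps $\mathcal{P}_p(\Theta)$ into itself.

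\emph{The main obstacle.} The delicate step is the estimate that legitimises everything above: showing $\int_{\mathcal{Z}^T}\frac{\delta}{\delta m}\widehat{Q}_t(X_t^{\mathrm{ref}}(Z),m,Z;\theta)\,\nu(\mathrm{d}Z)$ is finite, which is where $\nu\in\mathcal{P}_q(\mathcal{Z}^T)$, $q\geq T$, enters. I would propagate the recursion \eqref{transition function} under the linear-growth bounds of Assumptions \ref{verifiable assumptions}(iv),(vi) to control $X_s^{t,x,\mathbf{m}}(Z)$ by a polynomial in $(\norm{Z_1},\dots,\norm{Z_T})$, differentiate the recursion once — using the Lipschitz/differentiability bounds of Assumption \ref{verifiable assumptions}(v) and the activation growth of Assumption \ref{verifiable assumptions}(vi) — to control $\delta X_s/\delta m_t$, and combine with the linear growth of the cost gradients (Assumption \ref{verifiable assumptions}(iii)) and the definition \eqref{ADP problem} of $\widehat{Q}_t$ to bound $\big|\tfrac{\delta}{\delta m}\widehat{Q}_t(\cdots;\theta)\big|$ by such a polynomial times $(1+\norm{\theta}^2)$, with degree controlled by $T$; integrability against $\nu$ is then immediate from the moment hypothesis. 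The remaining technical points — the reduction to absolutely continuous measures, differentiability of the entropy only in the interior of its domain, and the fact that the variational inequality collapses to the pointwise equality above with no active constraints — I would dispatch as in the supervised mean-field setting of \cite{sam_ge}, of which this is the dynamic-programming analogue. Finally, running the $T$ inductive steps and invoking uniqueness at each produces the unique Gibbs vector $\mathfrak{m}(\nu)$.
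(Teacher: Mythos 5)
Your proposal is correct and follows essentially the same route as the paper: backwards induction freezing the later measures, strict convexity (convex $\widehat{Q}_t$ plus strictly convex relative entropy) for uniqueness, weak compactness of KL-sublevel sets for existence, the first-order condition giving the Gibbs fixed-point density, and the growth estimate on $\tfrac{\delta}{\delta m}\widehat{Q}_t$ propagated through the state recursion together with H\"older's inequality and $q\geq T$ to get integrability in $\nu$, with Assumption \ref{verifiable assumptions}(vii) dominating the quadratic-in-$\theta$ exponent so that $M_t$ maps into $\mathcal{P}_p(\Theta)$. The only cosmetic differences are that you derive the moment bound on sublevel sets via a differential-entropy bootstrap and state the first-order condition from convexity directly, whereas the paper cites a weak-compactness lemma and the first-order condition of the mean-field Langevin literature, and it carries out your sketched derivative estimate explicitly via its appendix lemmas.
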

\begin{proof}
    We proceed with proving the claim assuming $\mathcal{X}, \mathcal{U} \subset \mathbb{R}$. The multivariate case follows analogously with increasingly involved notation. 
    \par 
    At any time $t$, by admissibility of the Gibbs measure $\gamma^\sigma$ we first note that any potential minimisers will occupy the set 
    \[ \Big\{m_{t}\in\mathcal{P}_p(\Theta): \frac{\sigma^2}{2\beta^2}\mathrm{KL}(m_{t}||\gamma^{\sigma}) \leq \mathbb{E}_{Z\sim\nu}\big[\widehat{Q}_t(X_t^{\mathrm{ref}}(Z),\gamma^{\sigma}, Z)\big]\Big\}.\] 
    From \cite[Lemma 1.4.3]{weak_compactness} we note that this set is relatively compact in the weak topology, guaranteeing existence of minimisers to \eqref{entropy_minimisation_problem}.
    \par
    By Assumption \ref{verifiable assumptions}, the empirical Q-functions $\widehat{Q}_t$ are convex with respect to $m_t$, and the relative entropy is strictly convex with respect to $m_t$, so the regularised empirical Q-function from \eqref{entropy_minimisation_problem} is strictly convex with respect to $m_t$ and therefore we can guarantee existence of a unique minimiser. 
    \par
    From the first-order condition in \cite{langevin}, we may conclude that, for each $t$, $\mathfrak{m}_t^{\beta,\sigma}(\nu)$ satisfies
    \[ \frac{\delta}{\delta m}\int_{\mathcal{Z}^{T}} \widehat{Q}_t(X_t^{\mathrm{ref}}(Z), m, Z; \theta)\nu(\mathrm{d}Z) + \frac{\sigma^2}{2\beta^2}\log(m) + \frac{1}{2\beta^2}\Gamma(\theta) = F_t,\]
    where the $\{F_t\}_{t=0}^{T-1}$ are constants, and $\frac{\delta}{\delta m}$ denotes the linear functional derivative with respect to $m$, as defined in Definition \ref{frechet definition}, Appendix \ref{prereq}. Rearrangement yields the representation \ref{Szpruch first order condition}.
    \par
    We are just left to show that the given map represents a genuinely valid map into $\mathcal{P}_p(\Theta)$. Given the exponential form of $M_t$, it is sufficient to show that, for each $t$ and each $m\in \mathcal{P}_p(\Theta)$, we have 
    \[ -\frac{2\beta^2}{\sigma^2}\left[\frac{\delta}{\delta m}\int_{\mathcal{Z}^{T}} \widehat{Q}_t(X_t^{\mathrm{ref}}(Z), m,Z;\theta)\nu(\mathrm{d}Z)+\frac{1}{2\beta^2} \Gamma(\theta)\right] \leq -c\norm{\theta}^p,\]
    for some constant $c > 0$, for all $\theta$ sufficiently large.
    \par
    
    Computing directly,
    \begin{align*}
        \Big|\frac{\delta \widehat{Q}_t}{\delta m}(X_t^{\mathrm{ref}}(Z), m ,Z; \theta) \Big| & = \Big|\partial_u c_t^*\Big(\phi(X_t^{\mathrm{ref}}(Z), \theta) - \mathbb{E}_{\theta\sim m}[\phi(X_t^{\mathrm{ref}}(Z), \theta)]\Big) \\
        & \quad + \sum_{s > t} \Big(\partial_x c_s^* + (\partial_u c_s^*)( \partial_x u_{\mathfrak{m}_s(\nu)})\Big)\frac{\delta}{\delta m}X_s^{t, m, \mathfrak{m}(\nu)}(Z)\Big|,
    \end{align*}
    where $X_s^{t, m, \mathfrak{m}(\nu)}(Z)$ denotes the state process with initial condition $X_t^{t, m, \mathfrak{m}(\nu)}(Z) = X_t^{\mathrm{ref}}(Z)$, followed by control measure $m$ at time $t$, then the $(\mathfrak{m}_s(\nu))_{s > t}$ obtained from the prior minimisations in the backwards induction \eqref{entropy_minimisation_problem}. Writing $P(Z) := \prod_{s=0}^{T-1}(1+\norm{Z_{s+1}})$ for notational clarity, and applying the inequalities of Assumption \ref{verifiable assumptions}, Lemma \ref{frechet_state}, and Lemma \ref{norm x}, we see 
    \begin{align*}
        \Big|\frac{\delta \widehat{Q}_t}{\delta m}(X_t^{\mathrm{ref}}(Z), m ,Z; \theta) \Big| & \leq C(1+\norm{X_t^{\mathrm{ref}}(Z)})(1+E^{(2)}_{m})(1+\norm{\theta}^2 + E_m^{(2)})\prod_{s=t+1}^{T-1}(1+E_{\mathfrak{m}_s(\nu)}^{(2)}) \\
        & \leq C (1+\norm{x_0})P(Z)(1+E^{(2)}_{m})(1+\norm{\theta}^2 + E_m^{(2)})\prod_{s=t+1}^{T-1}(1+E_{\mathfrak{m}_s(\nu)}^{(2)}) \\
        & \leq C(1+\norm{x_0})P(Z) b(m,\theta)\prod_{s=t+1}^{T-1}(1+E^{(2)}_{\mathfrak{m}_s(\nu)}),
    \end{align*}
    where we have absorbed moments of the reference controls into $C$, and written
    \[ b(m, \theta) := (1+\norm{\theta}^2 + E_m^{(2)})^2.\]
    Here $E_m^{(2)} < \infty$ as $m \in \mathcal{P}_p(\Theta)$. 
    Since $p \geq 4$, and we are taking an inductive approach in assuming that the previously found measures $(\mathfrak{m}_s(\nu))_{s>t}$ are in $\mathcal{P}_p(\Theta)$, we know that 
    \[ \prod_{s=t+1}^{T-1} (1+E^{(2)}_{\mathfrak{m}_s(\nu)}) < \infty.\]
    Since $q \geq T$, by H\"{o}lder's inequality we may conclude that 
    \begin{align*}
    \mathbb{E}_{Z\sim\nu}[P(Z)] & \leq C\mathbb{E}_{Z\sim\nu}\Big[\prod_{s=0}^{T-1}\big(1+\norm{Z_{s+1}}\big)\Big] \\ & \quad \leq C\prod_{s=0}^{T-1}\mathbb{E}_{Z\sim\nu}\big[\big(1+\norm{Z_{s+1}}\big)^T\big]^{\frac{1}{T}} \leq C\mathbb{E}_{Z\sim\nu}\big[\big(1+\norm{Z}\big)^T\big] < \infty,  
    \end{align*}
    and so may write 
    \[ \Bigg|\int_{\mathcal{Z}^T} \frac{\delta \widehat{Q}_t}{\delta m}(X_t^{\mathrm{ref}}(Z), m, Z;\theta)\nu(\mathrm{d}Z)\Bigg| \leq C b(m,\theta),\]
    which also gives 
    \[ \int_{\mathcal{Z}^T}\frac{\delta \widehat{Q}_t}{\delta m}(X_t^{\mathrm{ref}}(Z), m, Z;\theta)\nu(\mathrm{d}Z) \geq -C b(m,\theta),\]
    where we have omitted $(1+\norm{x_0})$ since $\norm{x_0} < \infty$ by definition. 
    Finally, from Assumption \ref{verifiable assumptions}, since $p \geq 4$, for any $A > 0$ there exists some $M > 0$ such that $\norm{\theta} \geq M$ ensures
    \[ \Gamma(\theta) \geq A(\norm{\theta}^p + b(m,\theta)).\]
    For any $t$ we have 
    \[ -\frac{2\beta^2}
    {\sigma^2}\left[\frac{\delta}{\delta m}\int_{\mathcal{Z}^{T}} \widehat{Q}_t(X_t^{\mathrm{ref}}(Z), m,Z;\theta)\nu(\mathrm{d}Z)+\frac{1}{2\beta^2} \Gamma(\theta)\right] \leq \frac{2\beta^2}{\sigma^2}C b(m,\theta) - \frac{A}{\sigma^2}\norm{\theta}^p - \frac{A}{\sigma^2}b(m,\theta),\]
    so taking $A = 2\beta^2 C$ provides the required bound. 
    
\end{proof}
\section{Generalisation Error as Stability}\label{generalisation as stability section}

Returning to the overlearning result \eqref{overlearning result}, it is of interest to bound the difference in performance of the Gibbs vector $\mathfrak{m}(\nu_n)$ in and out-of-sample. This is characterised by the \textit{generalisation error}, which we denote by 
\[\mathrm{gen}(\mathfrak{m}(\nu_n), \nu_{\mathrm{pop}}) := \mathbb{E}_{\mathbf{Z}_n}\big[\mathbb{E}_{Z\sim\nu_{\mathrm{pop}}}[\ell(X^{\mathfrak{m}(\nu_n)}(Z), \mathfrak{m}(\nu_n))] - \mathbb{E}_{Z\sim\nu_n}[\ell(X^{\mathfrak{m}(\nu_n)}(Z), \mathfrak{m}(\nu_n))]\big].\]

The first important step in analysing the generalisation error involves noting a result from \cite{leave_one_out}, which allows us to characterise the generalisation error of the Gibbs vector measures as their stability under resampling. 
\begin{theorem}\label{bousquet ge result}
    Given a training set of i.i.d.~paths $\mathbf{Z}_n = \{Z^{(i)}\}_{i=1}^{n}$ with each $Z^{(i)}\sim\nu_{\mathrm{pop}}$, and a single resampled path $\widetilde{Z}^{(1)}\sim\nu_{\mathrm{pop}}$ independent of the training paths, we may rewrite the generalisation error as 
    \[ \mathrm{gen}(\mathfrak{m}(\nu_n), \nu_{\mathrm{pop}}) = \mathbb{E}_{\mathbf{Z}_n, \widetilde{Z}^{(1)}}\Big[\ell(X^{\mathfrak{m}(\nu_n)}(\widetilde{Z}^{(1)}), \mathfrak{m}(\nu_n)) - \ell(X^{\mathfrak{m}(\nu_{n,(1)})}(\widetilde{Z}^{(1)}), \mathfrak{m}(\nu_{n,(1)}))\Big],\]
    where $\nu_{n,(1)} := \nu_n + \frac{1}{n}\Big(\delta_{\widetilde{Z}^{(1)}} - \delta_{Z^{(1)}}\Big)$ denotes the resampled empirical distribution. 
\end{theorem}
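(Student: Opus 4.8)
The plan is to exploit the i.i.d.\ structure of the training set and a symmetry (exchangeability) argument, which is the standard route for this ``generalisation-error-equals-resampling-stability'' identity (often attributed to the algorithmic stability literature; here cited from \cite{leave_one_out}). The key observation is that the Gibbs vector $\mathfrak{m}(\nu_n)$ depends on $\mathbf{Z}_n$ only through the empirical measure $\nu_n$, and hence is a symmetric function of the training paths. First I would expand the definition of $\mathrm{gen}(\mathfrak{m}(\nu_n), \nu_{\mathrm{pop}})$ into its two terms: the out-of-sample term $\mathbb{E}_{\mathbf{Z}_n}\mathbb{E}_{Z\sim\nu_{\mathrm{pop}}}[\ell(X^{\mathfrak{m}(\nu_n)}(Z), \mathfrak{m}(\nu_n))]$ and the in-sample term $\mathbb{E}_{\mathbf{Z}_n}\mathbb{E}_{Z\sim\nu_n}[\ell(X^{\mathfrak{m}(\nu_n)}(Z), \mathfrak{m}(\nu_n))]$.

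For the out-of-sample term, I would introduce the independent resampled path $\widetilde Z^{(1)} \sim \nu_{\mathrm{pop}}$ and note that, since $\mathfrak{m}(\nu_n)$ is independent of $\widetilde Z^{(1)}$, we may replace the inner expectation over $Z\sim\nu_{\mathrm{pop}}$ by an expectation over $\widetilde Z^{(1)}$; this gives $\mathbb{E}_{\mathbf{Z}_n, \widetilde Z^{(1)}}[\ell(X^{\mathfrak{m}(\nu_n)}(\widetilde Z^{(1)}), \mathfrak{m}(\nu_n))]$, which is exactly the first term on the right-hand side of the claim. For the in-sample term, I would write $\mathbb{E}_{Z\sim\nu_n}[\ell(X^{\mathfrak{m}(\nu_n)}(Z),\mathfrak{m}(\nu_n))] = \frac{1}{n}\sum_{i=1}^n \ell(X^{\mathfrak{m}(\nu_n)}(Z^{(i)}), \mathfrak{m}(\nu_n))$ and then use that the $Z^{(i)}$ are i.i.d., so each summand has the same expectation; hence the in-sample term equals $\mathbb{E}_{\mathbf{Z}_n}[\ell(X^{\mathfrak{m}(\nu_n)}(Z^{(1)}), \mathfrak{m}(\nu_n))]$.

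The crux is then a relabelling trick: in the expression $\mathbb{E}_{\mathbf{Z}_n}[\ell(X^{\mathfrak{m}(\nu_n)}(Z^{(1)}), \mathfrak{m}(\nu_n))]$, the point $Z^{(1)}$ plays the role of the resampled point. I would augment the probability space with the extra i.i.d.\ draw $\widetilde Z^{(1)}$ and swap the names of $Z^{(1)}$ and $\widetilde Z^{(1)}$: because $(\widetilde Z^{(1)}, Z^{(2)},\ldots, Z^{(n)})$ has the same joint law as $(Z^{(1)}, Z^{(2)},\ldots, Z^{(n)})$, the in-sample term equals $\mathbb{E}_{\mathbf{Z}_n,\widetilde Z^{(1)}}[\ell(X^{\mathfrak{m}(\nu')}(\widetilde Z^{(1)}), \mathfrak{m}(\nu'))]$ where $\nu' = \frac{1}{n}(\delta_{\widetilde Z^{(1)}} + \sum_{i=2}^n \delta_{Z^{(i)}})$. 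But this $\nu'$ is precisely $\nu_{n,(1)} = \nu_n + \frac{1}{n}(\delta_{\widetilde Z^{(1)}} - \delta_{Z^{(1)}})$, the resampled empirical distribution. Subtracting the two rewritten terms yields the stated identity.

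I do not expect a genuine obstacle here — the argument is a bookkeeping exercise in Fubini plus exchangeability — but the one point requiring a little care is that every manipulation needs the relevant random variables to be integrable, so that Fubini and the swapping of expectations are justified; Assumption \ref{verifiable assumptions}(ii)--(iv) together with the moment hypothesis $\nu_{\mathrm{pop}}\in\mathcal{P}_q(\mathcal{Z}^T)$ (which will be invoked in the surrounding results, cf.\ Theorem \ref{minimisers}) ensure $\ell(X^{\mathfrak{m}(\nu_n)}(Z),\mathfrak{m}(\nu_n))$ has finite expectation, so these steps are legitimate. The other subtlety is purely notational: one must keep track of the fact that $\ell$ depends on the control $\mathfrak{m}(\nu_n)$ \emph{and} on the environment path $Z$ through the state process $X^{\mathfrak{m}(\nu_n)}(Z)$ simultaneously, so when we swap $Z^{(1)}$ with $\widetilde Z^{(1)}$ we must swap it in \emph{both} occurrences — which is exactly what produces the $\mathfrak{m}(\nu_{n,(1)})$ (not $\mathfrak{m}(\nu_n)$) in the second term of the claim.
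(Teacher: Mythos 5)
Your proof is correct, and is in fact the standard exchangeability/relabelling argument behind this identity. The paper itself does not prove Theorem \ref{bousquet ge result}: it simply attributes the statement to \cite{leave_one_out} (the algorithmic-stability / leave-one-out literature) and moves on. So there is no ``paper proof'' to compare routes against; your write-up supplies the self-contained argument that the citation stands in for.

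On substance: the decomposition into the out-of-sample term (replace $\mathbb{E}_{Z\sim\nu_{\mathrm{pop}}}$ by an expectation over the fresh $\widetilde{Z}^{(1)}$, using independence of $\widetilde{Z}^{(1)}$ from $\mathbf{Z}_n$) plus the in-sample term (reduce to the first summand by symmetry, then swap $Z^{(1)} \leftrightarrow \widetilde{Z}^{(1)}$ using that $(Z^{(1)},\widetilde{Z}^{(1)},Z^{(2)},\ldots,Z^{(n)})$ and $(\widetilde{Z}^{(1)},Z^{(1)},Z^{(2)},\ldots,Z^{(n)})$ have the same joint law) is exactly right. The one piece of hygiene worth making explicit -- which you do note -- is that $\mathfrak{m}(\cdot)$ must be a well-defined measurable map of the empirical measure $\nu_n$ (hence symmetric in the $Z^{(i)}$); this is what Theorem \ref{minimisers} supplies, and without it the ``each summand has the same expectation'' step would not be available. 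The integrability remark you add is also appropriate, since Fubini and the swap both require the relevant loss random variables to be integrable, which the growth conditions in Assumption \ref{verifiable assumptions} together with the moment assumption on $\nu_{\mathrm{pop}}$ secure.
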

Writing the generalisation error in this way is useful, since we may make repeated use of the fundamental theorem of calculus (both in standard terms and for linear functional derivatives) in order to write this object in terms of derivatives of known quantities. In particular, we observe a $1/n$ scaling from the fact that, for a general $\mathcal{C}^1$ function $F(m_t)$, from the definition of linear functional derivative we may write 
\begin{align*}
    & F(\mathfrak{m}_t(\nu_{n})) - F(\mathfrak{m}_t(\nu_{n,(1)})) \\ 
    & = \int_0^1 \int_\Theta \frac{\delta F}{\delta m_t}\Big(\mathfrak{m}_t(\nu_{n,(1)}) + \lambda\big(\mathfrak{m}_t(\nu_n) - \mathfrak{m}_t(\nu_{n,(1)})\big); \theta\Big)\big(\mathfrak{m}_t(\nu_n) - \mathfrak{m}_t(\nu_{n,(1)})\big)(\mathrm{d}\theta)\mathrm{d}\lambda \\
    & = \frac{1}{n}\int_0^1 \int_0^1 \int_\Theta \int_{\mathcal{Z}^{T}} \frac{\delta F}{\delta m_t}\Big(\mathfrak{m}_t(\nu_{n,(1)}) + \lambda\big(\mathfrak{m}_t(\nu_n) - \mathfrak{m}_t(\nu_{n,(1)})\big); \theta\Big) \\
    & \quad \times \frac{\delta \mathfrak{m}_t}{\delta \nu}\Big(\nu_{n,(1)} + \tilde\lambda\big(\nu_n - \nu_{n,(1)}\big); Z\Big)\Big(\delta_{Z^{(1)}} - \delta_{\widetilde{Z}^{(1)}}\Big)(\mathrm{d}Z)\mathrm{d}\theta\mathrm{d}\tilde\lambda\mathrm{d}\lambda,
\end{align*}
where we use the results from Appendix \ref{appB}, which guarantee that each $\mathfrak{m}_t$ is $\mathcal{C}^1$ when viewed as a map of a general measure $\nu\in\mathcal{P}_q(\mathcal{Z}^T)$, where $q$ is as described in Theorem \ref{minimisers}. The difficulty in our context arises from the interactions of controls at different times via the state variable, leading to the following, more involved, representation. 
\begin{theorem}\label{generalisation error}
    The generalisation error of the Gibbs vector $\mathfrak{m}(\nu_n)$ can be written as 
    \begin{align}
        &\mathrm{gen}(\mathfrak{m}(\nu_n), \nu_{\mathrm{pop}}) \nonumber \\& = \frac{1}{n}\mathbb{E}_{\mathbf{Z}_n, \widetilde{Z}^{(1)}}\Bigg[\int_0^1\int_0^1\int_\Theta \Bigg(\sum_{t=0}^{T-1}\Bigg\{\frac{\delta c_t^*}{\delta m_t}\Big(X_t^{\mathrm{ref}}(\widetilde{Z}^{(1)}), \mathfrak{m}_t^{\tilde{\lambda}};\theta\Big)\frac{\delta \mathfrak{m}_t}{\delta \nu}\Big(\nu_n^{\tilde{\lambda}_2}; Z\Big)\Bigg\} \nonumber \\
        & \quad +  \int_0^1 \sum_{t=1}^{T-1}\Bigg\{f_t^{\lambda}\big(\mathfrak{m}_{t-1:T-1}(\nu_{n,(1)})\big)\frac{\delta g_t}{\delta m_{t-1}}(\mathfrak{m}_{t-1}^{\tilde\lambda}; \theta)\frac{\delta \mathfrak{m}_{t-1}}{\delta \nu}\Big(\nu_n^{\tilde\lambda_2}; Z\Big) \nonumber \\
        & \quad + \sum_{s=t-1}^{T-1} \frac{\delta f_t^\lambda}{\delta m_s}\Big(\mathfrak{m}_{t-1:s-1}(\nu_{n,(1)}), \mathfrak{m}_s^{\tilde\lambda}, \mathfrak{m}_{s+1:T-1}(\nu_n); \theta\Big) \nonumber \\
        & \quad \quad \times g_t(\mathfrak{m}_{t-1}(\nu_n))\frac{\delta \mathfrak{m}_s}{\delta \nu}\Big(\nu_n^{\tilde\lambda_2}; Z\Big)\Bigg\}\mathrm{d}\lambda\Bigg)\Bigg|_{Z=Z^{(1)}}^{Z=\widetilde{Z}^{(1)}}(\mathrm{d}\theta)\mathrm{d}\tilde\lambda_2 \mathrm{d}\tilde\lambda\Bigg] \nonumber 
    \end{align}
    where for notational clarity we define, for $\mathbf{m}:=(m_l)_{l=0}^{T-1}$,
    \begin{align*}
        f_t^\lambda(m_{t-1}, m_t, \ldots, m_{T-1}) & := \frac{\partial \widehat{Q}_t}{\partial x}\Big(X_{t}^{\mathrm{ref}}(\widetilde{Z}^{(1)}) + \lambda\big(X_{t}^{t-1, \mathbf{m}}(\widetilde{Z}^{(1)}) - X_{t}^{\mathrm{ref}}(\widetilde{Z}^{(1)})\big), m_{t},\widetilde{Z}^{(1)}\Big), \\
        g_t(m_{t-1})& := X_{t}^{t-1, \mathbf{m}}(\widetilde{Z}^{(1)}) - X_{t}^{\mathrm{ref}}(\widetilde{Z}^{(1)}) \\
        & = h_{t-1}(X_{t-1}^{\mathrm{ref}}(\widetilde{Z}^{(1)}), u_{m_{t-1}}(X_{t-1}^{\mathrm{ref}}(\widetilde{Z}^{(1)})), \widetilde{Z}^{(1)}_{t}) - X_{t}^{\mathrm{ref}}(\widetilde{Z}^{(1)}), \\
        \mathfrak{m}_t^{\tilde\lambda} & := \mathfrak{m}_t(\nu_{n,(1)}) + \tilde\lambda\big(\mathfrak{m}_t(\nu_{n}) - \mathfrak{m}_t(\nu_{n,(1)})\big) \\
        \nu_n^{\tilde{\lambda}_2} & := \nu_{n,(1)} + \tilde{\lambda}_2(\nu_n - \nu_{n,(1)}).
    \end{align*}
\end{theorem}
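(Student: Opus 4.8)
The plan is to start from the stability representation of Theorem~\ref{bousquet ge result} and to unfold the pathwise loss difference $\ell(X^{\mathfrak{m}(\nu_n)}(\widetilde{Z}^{(1)}), \mathfrak{m}(\nu_n)) - \ell(X^{\mathfrak{m}(\nu_{n,(1)})}(\widetilde{Z}^{(1)}), \mathfrak{m}(\nu_{n,(1)}))$ into a sum of linear-functional-derivative terms, each carrying an explicit factor $1/n$. The first step is a deterministic ``reference decomposition'' of the pathwise loss: since $X_0^{\mathrm{ref}}(Z) = x_0$ we have $\ell(X^{\mathbf{m}}(Z), \mathbf{m}) = \widehat{Q}_0(X_0^{\mathrm{ref}}(Z), \mathbf{m}, Z)$, and combining the one-step recursion $\widehat{Q}_t(x, m_{t:T-1}, Z) = c_t^*(x, m_t) + \widehat{Q}_{t+1}(h_t(x, u_{m_t}(x), Z_{t+1}), m_{t+1:T-1}, Z)$ for $t < T-1$ (with $\widehat{Q}_{T-1} = c_{T-1}^*$) with the ordinary fundamental theorem of calculus in the state variable — comparing $\widehat{Q}_{t+1}$ at $X_{t+1}^{t,\mathbf{m}}(Z) := h_t(X_t^{\mathrm{ref}}(Z), u_{m_t}(X_t^{\mathrm{ref}}(Z)), Z_{t+1})$ with $\widehat{Q}_{t+1}$ at $X_{t+1}^{\mathrm{ref}}(Z)$ — I would obtain, by backwards induction on $t$,
\[
\ell(X^{\mathbf{m}}(Z), \mathbf{m}) = \sum_{t=0}^{T-1} c_t^*(X_t^{\mathrm{ref}}(Z), m_t) + \sum_{t=1}^{T-1} \Big(\int_0^1 f_t^\lambda(m_{t-1},\dots,m_{T-1})\,\mathrm{d}\lambda\Big)\, g_t(m_{t-1}),
\]
with $f_t^\lambda$ and $g_t$ exactly the quantities in the statement. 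This step uses only differentiability of $h_t$ and of the running/terminal costs (Assumption~\ref{verifiable assumptions}(iii),(v)), which also furnish the linear-growth control on $\partial_x\widehat{Q}_{t+1}$ needed later.

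Substituting this identity (at $\mathbf{m} = \mathfrak{m}(\nu_n)$ and at $\mathbf{m} = \mathfrak{m}(\nu_{n,(1)})$) into Theorem~\ref{bousquet ge result}, the loss difference splits into a ``direct'' part $\sum_{t=0}^{T-1}\big[c_t^*(X_t^{\mathrm{ref}},\mathfrak{m}_t(\nu_n)) - c_t^*(X_t^{\mathrm{ref}},\mathfrak{m}_t(\nu_{n,(1)}))\big]$ and an ``indirect'' part coming from the $f_t^\lambda g_t$ terms. For the direct part, for each $t$ I would apply the linear-functional-derivative fundamental theorem of calculus twice, as in the display preceding the theorem: first in the measure argument (interpolating $\mathfrak{m}_t^{\tilde\lambda} := \mathfrak{m}_t(\nu_{n,(1)}) + \tilde\lambda(\mathfrak{m}_t(\nu_n) - \mathfrak{m}_t(\nu_{n,(1)}))$, producing $\tfrac{\delta c_t^*}{\delta m_t}(X_t^{\mathrm{ref}},\mathfrak{m}_t^{\tilde\lambda};\theta)$ — explicitly $\partial_u c_t(x,u_m(x))\phi(\theta,x)$ plus the $\nabla\Phi$ chain-rule term at $t = T-1$, since $m\mapsto u_m(x)$ is linear); then in $\nu$, using $\nu_n - \nu_{n,(1)} = \tfrac1n(\delta_{Z^{(1)}} - \delta_{\widetilde{Z}^{(1)}})$ and that $\mathfrak{m}_t$ is $\mathcal{C}^1$ on $\mathcal{P}_q(\mathcal{Z}^T)$ (Appendix~\ref{appB}, via the fixed-point equation~\eqref{Szpruch first order condition} and the strict convexity established in Theorem~\ref{minimisers}), which yields the $\tfrac1n$ factor together with $\tfrac{\delta \mathfrak{m}_t}{\delta\nu}(\nu_n^{\tilde\lambda_2};Z)$ evaluated between $Z = Z^{(1)}$ and $Z = \widetilde{Z}^{(1)}$.

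For the indirect part, each summand is $B_t(\mathfrak{m}(\nu_n)) - B_t(\mathfrak{m}(\nu_{n,(1)}))$ with $B_t(\mathbf{m}) = \big(\int_0^1 f_t^\lambda(m_{t-1},\dots,m_{T-1})\,\mathrm{d}\lambda\big) g_t(m_{t-1})$, a product that I would split by the product rule, freezing $\int_0^1 f_t^\lambda\,\mathrm{d}\lambda$ at $\nu_{n,(1)}$ in one piece and $g_t$ at $\nu_n$ in the other. In the first piece only $g_t$ varies, so the same two-step FTC in the single variable $\mathfrak{m}_{t-1}$ gives $f_t^\lambda(\mathfrak{m}_{t-1:T-1}(\nu_{n,(1)}))\tfrac{\delta g_t}{\delta m_{t-1}}(\mathfrak{m}_{t-1}^{\tilde\lambda};\theta)\tfrac{\delta\mathfrak{m}_{t-1}}{\delta\nu}(\nu_n^{\tilde\lambda_2};Z)$ inside $\int_0^1\mathrm{d}\lambda$ (here $g_t$ is $\mathcal{C}^1$ in $m_{t-1}$ by differentiability of $h_{t-1}$ and linearity of $u_m$). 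The second piece is the genuinely delicate one: $f_t^\lambda$ depends on all of $\mathfrak{m}_{t-1},\dots,\mathfrak{m}_{T-1}$ — on $\mathfrak{m}_{t-1}$ through the interpolated state and on $\mathfrak{m}_t,\dots,\mathfrak{m}_{T-1}$ through $\widehat{Q}_t$ — so I would telescope over the index $s$ from $t-1$ to $T-1$, at step $s$ swapping $\mathfrak{m}_s(\nu_n)$ for $\mathfrak{m}_s(\nu_{n,(1)})$ while holding $\mathfrak{m}_{t-1:s-1}$ at their $\nu_{n,(1)}$-values and $\mathfrak{m}_{s+1:T-1}$ at their $\nu_n$-values, applying the two-step FTC to each increment; this produces $\sum_{s=t-1}^{T-1}\tfrac{\delta f_t^\lambda}{\delta m_s}(\mathfrak{m}_{t-1:s-1}(\nu_{n,(1)}),\mathfrak{m}_s^{\tilde\lambda},\mathfrak{m}_{s+1:T-1}(\nu_n);\theta)\,g_t(\mathfrak{m}_{t-1}(\nu_n))\,\tfrac{\delta\mathfrak{m}_s}{\delta\nu}(\nu_n^{\tilde\lambda_2};Z)$, with $\tfrac{\delta f_t^\lambda}{\delta m_s}$ computed by the chain rule from the $\mathcal{C}^2$-regularity of $\widehat{Q}_t$ in $m$ (Assumption~\ref{verifiable assumptions}(i)) and from differentiability in $x$ composed with the state derivatives $\tfrac{\delta}{\delta m_{t-1}}X_t^{t-1,\mathbf{m}}$ (the state-sensitivity estimates, cf. Lemma~\ref{frechet_state}). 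Collecting the direct and indirect contributions, taking $\mathbb{E}_{\mathbf{Z}_n,\widetilde{Z}^{(1)}}$, and relabelling the dummy interpolation variables $\lambda,\tilde\lambda,\tilde\lambda_2$ then yields the displayed formula, with $\int_0^1\mathrm{d}\lambda$ sitting in front of the $f_t^\lambda$-terms because $\lambda$ enters only through $f_t^\lambda$.

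It remains to dispatch, routinely, that every linear functional derivative above — evaluated at the interpolated parameter measures and at $\nu_n^{\tilde\lambda_2}$, along $X^{\mathrm{ref}}(\widetilde{Z}^{(1)})$ — is jointly measurable and integrable against $\mathrm{d}\theta\,\mathrm{d}\lambda\,\mathrm{d}\tilde\lambda\,\mathrm{d}\tilde\lambda_2$ and under $\mathbb{E}_{\mathbf{Z}_n,\widetilde{Z}^{(1)}}$, so that Fubini and the FTC steps are legitimate; this uses the polynomial growth bounds in Assumption~\ref{verifiable assumptions}(ii)--(vi), the $L^p(\Theta)$ control of the Gibbs vector from Theorem~\ref{minimisers}, the $\mathcal{C}^1$-regularity of $\nu\mapsto\mathfrak{m}_t(\nu)$ from Appendix~\ref{appB}, and sufficiently many moments of $\nu_{\mathrm{pop}}$ to control $X_t^{\mathrm{ref}}$ and the perturbations $g_t$. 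The main obstacle, as the text anticipates, is the multi-index telescoping of the $f_t^\lambda$-terms: keeping straight which control measures are held at $\nu_n$, which at $\nu_{n,(1)}$, and which are interpolated, and verifying that the partial linear functional derivatives of $f_t^\lambda$ — second-order objects mixing $\partial_{xx}\widehat{Q}_t$, $\tfrac{\delta}{\delta m}\widehat{Q}_t$ and state Jacobians — are well defined and integrable, is where essentially all the work lies.
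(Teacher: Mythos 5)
Your proposal is correct and follows essentially the same route as the paper's proof: unrolling the dynamic-programming recursion for $\widehat{Q}_t$ against the reference path with the fundamental theorem of calculus in the state variable to produce the $c_t^*$ and $f_t^\lambda g_t$ terms, then telescoping the product over the measure arguments (freezing $g_t$ at $\nu_n$ and the earlier arguments of $f_t^\lambda$ at $\nu_{n,(1)}$, exactly as in the stated formula) and applying the two-step linear-functional-derivative FTC, first in $m$ and then in $\nu$, with $\nu_n-\nu_{n,(1)}=\tfrac1n(\delta_{Z^{(1)}}-\delta_{\widetilde Z^{(1)}})$ supplying the $1/n$ factor. The only (immaterial) difference is that you establish the reference decomposition pathwise for $\ell$ before differencing, whereas the paper performs the same add-and-subtract recursively on $\mathrm{gen}_t$; the two are algebraically identical.
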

\begin{proof}
    We begin by considering the generalisation error of each of the minimisation problems in \eqref{entropy_minimisation_problem}. That is, we consider
    \begin{align*} & \mathrm{gen}_t(\mathfrak{m}_t(\nu_n), \nu_{\mathrm{pop}}) := \mathbb{E}_{\mathbf{Z}_n, \widetilde{Z}^{(1)}}\Big[\widehat{Q}_t(X_t^{\mathrm{ref}}(\widetilde{Z}^{(1)}), \mathfrak{m}_t(\nu_n),\widetilde{Z}^{(1)}) - \widehat{Q}_t(X_t^{\mathrm{ref}}(\widetilde{Z}^{(1)}), \mathfrak{m}_t(\nu_{n,(1)}), \widetilde{Z}^{(1)})\Big].
    \end{align*}
    Expanding using \eqref{ADP problem}, we see 
    \begin{align*}
    & \mathrm{gen}_t(\mathfrak{m}_t(\nu_n), \nu_{\mathrm{pop}}) \\   
    & = \mathbb{E}_{\mathbf{Z}_n, \widetilde{Z}^{(1)}}\Big[c_t(X_t^{\mathrm{ref}}(\widetilde{Z}^{(1)}), \mathfrak{m}_t(\nu_n)) - c_t(X_t^{\mathrm{ref}}(\widetilde{Z}^{(1)}), \mathfrak{m}_{t}(\nu_{n, (1)}))\Big] \\
    & \quad + \mathbb{E}_{\mathbf{Z}_n, \widetilde{Z}^{(1)}}\Big[\widehat{Q}_{t+1}(X_{t+1}^{t, \mathfrak{m}(\nu_n)}(\widetilde{Z}^{(1)}), \mathfrak{m}_{t+1}(\nu_n), \widetilde{Z}^{(1)})- \widehat{Q}_{t+1}(X_{t+1}^{t, \mathfrak{m}(\nu_{n,(1)})}(\widetilde{Z}^{(1)}), \mathfrak{m}_{t+1}(\nu_{n,(1)}),\widetilde{Z}^{(1)})\Big] \\
    & = \mathbb{E}_{\mathbf{Z}_n, \widetilde{Z}^{(1)}}\Big[c_t(X_t^{\mathrm{ref}}(\widetilde{Z}^{(1)}), \mathfrak{m}_t(\nu_n)) - c_t(X_t^{\mathrm{ref}}(\widetilde{Z}^{(1)}), \mathfrak{m}_{t}(\nu_{n, (1)}))\Big] \\
    & \quad + \mathbb{E}_{\mathbf{Z}_n, \widetilde{Z}^{(1)}}\Big[\widehat{Q}_{t+1}(X_{t+1}^{t, \mathfrak{m}(\nu_n)}(\widetilde{Z}^{(1)}), \mathfrak{m}_{t+1}(\nu_n), \widetilde{Z}^{(1)})- \widehat{Q}_{t+1}(X_{t+1}^{\mathrm{ref}}(\widetilde{Z}^{(1)}), \mathfrak{m}_{t+1}(\nu_n), \widetilde{Z}^{(1)})\Big]\\ & \quad - \mathbb{E}_{\mathbf{Z}_n, \widetilde{Z}^{(1)}}\Big[\widehat{Q}_{t+1}(X_{t+1}^{t, \mathfrak{m}(\nu_{n,(1)})}(\widetilde{Z}^{(1)}), \mathfrak{m}_{t+1}(\nu_{n,(1)}), \widetilde{Z}^{(1)}) -\widehat{Q}_{t+1}(X_{t+1}^{\mathrm{ref}}(\widetilde{Z}^{(1)}), \mathfrak{m}_{t+1}(\nu_{n,(1)}), \widetilde{Z}^{(1)})\Big] \\
    & \quad + \mathbb{E}_{\mathbf{Z}_n, \widetilde{Z}^{(1)}}\Big[\widehat{Q}_{t+1}(X_{t+1}^{\mathrm{ref}}(\widetilde{Z}^{(1)}), \mathfrak{m}_{t+1}(\nu_n), \widetilde{Z}^{(1)}) - \widehat{Q}_{t+1}(X_{t+1}^{\mathrm{ref}}(\widetilde{Z}^{(1)}), \mathfrak{m}_{t+1}(\nu_{n,(1)}), \widetilde{Z}^{(1)})\Big] \\
    & = \mathbb{E}_{\mathbf{Z}_n, \widetilde{Z}^{(1)}}\Big[c_t(X_t^{\mathrm{ref}}(\widetilde{Z}^{(1)}), \mathfrak{m}_t(\nu_n)) - c_t(X_t^{\mathrm{ref}}(\widetilde{Z}^{(1)}), \mathfrak{m}_{t}(\nu_{n, (1)}))\Big] \\ 
    & \quad + \mathbb{E}_{\mathbf{Z}_n, \widetilde{Z}^{(1)}}\Bigg[\int_0^1 \frac{\partial \widehat{Q}_{t+1}}{\partial x}\Big(X_{t+1}^{\mathrm{ref}}(\widetilde{Z}^{(1)}) + \lambda(X_{t+1}^{t, \mathfrak{m}(\nu_n)}(\widetilde{Z}^{(1)}) - X_{t+1}^{\mathrm{ref}}(\widetilde{Z}^{(1)})), \mathfrak{m}_{t+1}(\nu_n), \widetilde{Z}^{(1)}\Big) \\
    & \quad \quad \quad \times \Big(X_{t+1}^{t, \mathfrak{m}(\nu_n)}(\widetilde{Z}^{(1)}) - X_{t+1}^{\mathrm{ref}}(\widetilde{Z}^{(1)})\Big)\mathrm{d}\lambda \\
    & \quad \quad - \int_0^1 \frac{\partial \widehat{Q}_{t+1}}{\partial x}\Big(X_{t+1}^{\mathrm{ref}}(\widetilde{Z}^{(1)}) + \lambda(X_{t+1}^{t, \mathfrak{m}(\nu_{n,(1)})}(\widetilde{Z}^{(1)}) - X_{t+1}^{\mathrm{ref}}(\widetilde{Z}^{(1)})), \mathfrak{m}_{t+1}(\nu_{n,(1)}), \widetilde{Z}^{(1)}\Big) \\
    & \quad \quad \quad \times \Big(X_{t+1}^{t, \mathfrak{m}(\nu_{n,(1)})}(\widetilde{Z}^{(1)}) - X_{t+1}^{\mathrm{ref}}(\widetilde{Z}^{(1)})\Big)\mathrm{d}\lambda \Bigg] + \mathrm{gen}_{t+1}(\mathfrak{m}_{t+1}(\nu_n), \nu_{\mathrm{pop}}),
    \end{align*}
    where we have used the fundamental theorem of calculus in order to write the $\frac{\partial \widehat{Q}_{t+1}}{\partial x}$ terms. 
    Using this recursion, we conclude that 
    \begin{align*}
        & \mathrm{gen}(\mathfrak{m}(\nu_n), \nu_{\mathrm{pop}}) = \mathrm{gen}_0(\mathfrak{m}_0(\nu_n), \nu_{\mathrm{pop}}) \\ & \quad = \mathbb{E}_{\mathbf{Z}_n, \widetilde{Z}^{(1)}}\Bigg[\sum_{t=0}^{T-1}\Big\{c_t^*(X_t^{\mathrm{ref}}(\widetilde{Z}^{(1)}), \mathfrak{m}_t(\nu_n)) - c_t^*(X_t^{\mathrm{ref}}(\widetilde{Z}^{(1)}), \mathfrak{m}_{t}(\nu_{n, (1)}))\Big\} \\
        & \quad \quad + \sum_{t=0}^{T-2}\Bigg\{\int_0^1 \frac{\partial \widehat{Q}_{t+1}}{\partial x}\Big(X_{t+1}^{\mathrm{ref}}(\widetilde{Z}^{(1)}) + \lambda(X_{t+1}^{t, \mathfrak{m}(\nu_n)}(\widetilde{Z}^{(1)}) - X_{t+1}^{\mathrm{ref}}(\widetilde{Z}^{(1)})), \mathfrak{m}_{t+1}(\nu_n), \widetilde{Z}^{(1)}\Big) \\
        & \quad \quad \quad \times \Big(X_{t+1}^{t, \mathfrak{m}(\nu_n)}(\widetilde{Z}^{(1)}) - X_{t+1}^{\mathrm{ref}}(\widetilde{Z}^{(1)})\Big)\mathrm{d}\lambda \\
        & \quad \quad - \int_0^1 \frac{\partial \widehat{Q}_{t+1}}{\partial x}\Big(X_{t+1}^{\mathrm{ref}}(\widetilde{Z}^{(1)}) + \lambda(X_{t+1}^{t, \mathfrak{m}(\nu_{n,(1)})}(\widetilde{Z}^{(1)}) - X_{t+1}^{\mathrm{ref}}(\widetilde{Z}^{(1)})), \mathfrak{m}_{t+1}(\nu_{n,(1)}), \widetilde{Z}^{(1)}\Big) \\
        & \quad \quad \quad \times \Big(X_{t+1}^{t, \mathfrak{m}(\nu_{n,(1)})}(\widetilde{Z}^{(1)}) - X_{t+1}^{\mathrm{ref}}(\widetilde{Z}^{(1)})\Big)\mathrm{d}\lambda\Bigg\}\Bigg]. 
    \end{align*}
    We now proceed to simplify using linear functional derivatives. For the $c_t^*$ terms this will be simple, as measures differ in only one argument. On the other hand, the $\frac{\partial\widehat{Q}_t}{\partial x}$ terms differ in a number of arguments, so we decompose these into further terms which differ by only a single argument -- this is cumbersome but conceptually simple. 
    \par
    
    Writing using $f_t^\lambda, g_t$ as defined above, we can more clearly perform the decomposition,
    \begin{align*}
        & \int_0^1 \Big(f_t^\lambda\big(\mathfrak{m}_{{t-1}:{T-1}}(\nu_n)\big)g_t(\mathfrak{m}_{t-1}(\nu_n))-f_t^\lambda\big(\mathfrak{m}_{t-1:T-1}(\nu_{n,(1)})\big)g_t(\mathfrak{m}_{t-1}(\nu_{n,(1)}))\Big)\mathrm{d}\lambda \\
        & = \int_0^1 \Big(f_t^\lambda\big(\mathfrak{m}_{t-1:T-1}(\nu_n)\big)g_t(\mathfrak{m}_{t-1}(\nu_n)) - f_t^\lambda\big(\mathfrak{m}_{t-1}(\nu_{n,(1)}),\mathfrak{m}_{t:T-1}(\nu_n)\big)g_t(\mathfrak{m}_{t-1}(\nu_n)) \\
        & \quad + f_t^\lambda(\mathfrak{m}_{t-1}(\nu_{n,(1)}),\mathfrak{m}_{t:T-1}(\nu_n))g_t(\mathfrak{m}_{t-1}(\nu_n)) - f_t^\lambda(\mathfrak{m}_{t-1:t}(\nu_{n,(1)}),\mathfrak{m}_{t+1:T-1}(\nu_n))g_t(\mathfrak{m}_{t-1}(\nu_n)) \\
        & \quad +\cdots \\
        & \quad + f_t^\lambda(\mathfrak{m}_{t-1:T-2}(\nu_{n,(1)}),\mathfrak{m}_{T-1}(\nu_n))g_t(\mathfrak{m}_{t-1}(\nu_n)) - f_t^\lambda(\mathfrak{m}_{t-1:T-1}(\nu_{n,(1)}))g_t(\mathfrak{m}_{t-1}(\nu_n)) \\
        & \quad + f_t^\lambda(\mathfrak{m}_{t-1:T-1}(\nu_{n,(1)}))g_t(\mathfrak{m}_{t-1}(\nu_n))- f_t^\lambda(\mathfrak{m}_{t-1:T-1}(\nu_{n,(1)}))g_t(\mathfrak{m}_{t-1}(\nu_{n,(1)}))
        \Big)\mathrm{d}\lambda \\
        & = \int_0^1\int_0^1\int_\Theta \Big(\frac{\delta f_t^\lambda}{\delta m_{t-1}}\big(\mathfrak{m}_{t-1}^{\tilde{\lambda}}, \mathfrak{m}_{t:T-1}(\nu_n);\theta\big)g_t(\mathfrak{m}_{t-1}(\nu_n))\big(\mathfrak{m}_{t-1}(\nu_n) - \mathfrak{m}_{t-1}(\nu_{n,(1)})\big) \\
        & \quad + \frac{\delta f_t^\lambda}{\delta m_t}\big(\mathfrak{m}_{t-1}(\nu_{n,(1)}), \mathfrak{m}_t^{\tilde{\lambda}},\mathfrak{m}_{t+1:T-1}(\nu_n);\theta\big)g_t(\mathfrak{m}_{t-1}(\nu_n))\big(\mathfrak{m}_t(\nu_n) - \mathfrak{m}_t(\nu_{n,(1)})\big) \\
        & \quad + \cdots \\
        & \quad + \frac{\delta f_t^\lambda}{\delta m_{T-1}}\big(\mathfrak{m}_{t-1:T-2}(\nu_{n,(1)}),\mathfrak{m}_{T-1}^{\tilde\lambda};\theta\big)g_t(\mathfrak{m}_{t-1}(\nu_n))\big(\mathfrak{m}_{T-1}(\nu_n) - \mathfrak{m}_{T-1}(\nu_{n,(1)}))\big) \\
        & \quad + f_t^\lambda(\mathfrak{m}_{t-1:T-1}(\nu_{n,(1)}))\frac{\delta g_t}{\delta m_{t-1}}(\mathfrak{m}_{t-1}^{\tilde\lambda};\theta)\big(\mathfrak{m}_{t-1}(\nu_n) - \mathfrak{m}_{t-1}(\nu_{n,(1)})\big)\Big)(\mathrm{d}\theta)\mathrm{d}\tilde\lambda\mathrm{d}\lambda.
    \end{align*}
    We can similarly simplify, for general $s$, 
    \begin{align*}
        \mathfrak{m}_{s}(\nu_n) - \mathfrak{m}_{s}(\nu_{n,(1)}) & = \int_0^1\int_{\mathcal{Z}^{T}}\frac{\delta \mathfrak{m}_s}{\delta \nu}\Big(\nu_n^{\tilde{\lambda}_2}; Z\Big)\Big(\nu_n - \nu_{n,(1)}\Big)(\mathrm{d}Z)\mathrm{d}\tilde{\lambda}_2 \\
        & = \frac{1}{n}\int_0^1 \int_{\mathcal{Z}^{T}}\frac{\delta \mathfrak{m}_s}{\delta \nu}\Big(\nu_n^{\tilde{\lambda}_2}; Z\Big)\Big(\delta_{Z^{(1)}} - \delta_{\widetilde{Z}^{(1)}}\Big)(\mathrm{d}Z)\mathrm{d}\tilde{\lambda}_2 \\
        & = \frac{1}{n}\int_0^1 \frac{\delta \mathfrak{m}_s}{\delta \nu}\Big(\nu_n^{\tilde{\lambda}_2}; Z\Big)\Big|^{Z = Z^{(1)}}_{Z = \widetilde{Z}^{(1)}}\mathrm{d}\tilde{\lambda}_2.
    \end{align*}
    We are left to rewrite 
    \[ \mathbb{E}_{\mathbf{Z}_n, \widetilde{Z}^{(1)}}\Bigg[\sum_{t=0}^{T-1}\Big\{c_t^*(X_t^{\mathrm{ref}}(\widetilde{Z}^{(1)}), \mathfrak{m}_t(\nu_n)) - c_t^*(X_t^{\mathrm{ref}}(\widetilde{Z}^{(1)}), \mathfrak{m}_{t}(\nu_{n, (1)}))\Big\}\Bigg],\]
    which follows similarly, first taking the linear functional derivative in $m$, then the linear functional derivative in $\nu$. 
\end{proof}
\begin{remark}\label{L2 generalisation remark}
    A similar reformulation as in Theorem \ref{generalisation error} is possible for the $L_2$ generalisation error. With $q$ as in Theorem \ref{minimisers}, for $\nu \in \mathcal{P}_q(\mathcal{Z}^{T})$ we denote
    \begin{align}
        R_t(\nu, m_{t:T-1}) := \mathbb{E}_{Z\sim\nu}\big[\widehat{Q}_t(X_t^{\mathrm{ref}}(Z), m_{t:T-1}, Z)\big],
    \label{risk definition}\end{align}
    and define the $L_2$ generalisation error to be 
    \begin{align}
        & \mathbb{E}_{\mathbf{Z}_n}\Big[\big(R_0(\nu_{\mathrm{pop}}, \mathfrak{m}(\nu_n)) - R_0(\nu_n, \mathfrak{m}(\nu_n))\big)^2\Big] \nonumber \\
        & \quad = \mathbb{E}_{\mathbf{Z}_n}\Big[\Big(\mathbb{E}_{Z\sim\nu_{\mathrm{pop}}}[\ell(X^{\mathfrak{m}(\nu_n)}(Z), \mathfrak{m}(\nu_n))] - \mathbb{E}_{Z\sim\nu_n}[\ell(X^{\mathfrak{m}(\nu_n)}, \mathfrak{m}(\nu_n))]\Big)^2\Big]. \label{L2 error}
    \end{align} 
    We further discuss possible bounds on such an object in Remark \ref{Monte Carlo comparisons}.
\end{remark}
We now prove a result relating the moments of $\mathfrak{m}_t(\nu)$, which are stochastic if $\nu$ is allowed to be random, to a deterministic upper bound in terms of the moments of $Z$ and the measure $\tilde{\gamma}_p^\sigma$, which we define by its density
\[ \tilde{\gamma}_p^\sigma(\mathrm{d}\theta) := \frac{1}{\tilde{F}^\sigma}\exp\Big\{-\frac{1}{\sigma^2}\Gamma(\theta) + \norm{\theta}^p\Big\}\mathrm{d}\theta.\]
Due to Assumption \ref{verifiable assumptions}(vii), we know that $\tilde\gamma_p^\sigma \in \mathcal{P}_p(\Theta)$ and we observe that, for all $m \in \mathcal{P}_p(\Theta)$, 
\[ \mathrm{KL}(m||\gamma^\sigma) = \mathrm{KL}(m||\tilde{\gamma}_p^\sigma) + E_m^{(p)} + \mathrm{constant},\]
where the constant is independent of $m$. 
Therefore, considering the minimisers of the adjusted upper-triangular minimisation problems
\begin{align}\label{tilde_entropy_minimisation_problem} m_t \in \mathcal{P}_2(\Theta) \mapsto \mathbb{E}_{Z\sim\nu_n}\big[\widehat{Q}_t(X_t^{\mathrm{ref}}(Z), m, Z)\big] + E_{m_t}^{(p)}+\frac{\sigma^2}{2\beta^2}\mathrm{KL}(m_t||\tilde\gamma_p^\sigma), \quad t= T-1, \ldots, 0,
\end{align}
we see that the Gibbs vector $\mathfrak{m}(\nu_n)$ is also a minimiser here. In Lemma $\ref{tilde gamma lemma}$ we exploit the suboptimality of $\tilde\gamma_p^\sigma$ in solving \eqref{tilde_entropy_minimisation_problem} to derive the required upper bounds. 
\begin{lemma}\label{tilde gamma lemma}
    For $p$ as in Assumption \ref{verifiable assumptions},  there exists $C > 0$ such that for each $t \in \mathbb{T}$,
    \[ E_{\mathfrak{m}_t(\nu)}^{(p)} \leq TE_{\tilde{\gamma}_p^\sigma}^{(p)} + C\mathbb{E}_{Z\sim\nu}[P(Z)^2],\]
    where $\nu\in\mathcal{P}_q(\mathcal{Z}^{T})$ with $q$ as in Theorem \ref{minimisers}.
    
    In particular, there exists some $C > 0$  such that 
    \[ \prod_{t=0}^{T-1} (1+E_{\mathfrak{m}_t(\nu)}^{(p)}) \leq C\mathbb{E}_{Z\sim\nu}[P(Z)^{2T}],\]
    with $P(Z)$ as defined in Theorem \ref{minimisers}.
\end{lemma}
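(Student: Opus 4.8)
The plan is to exploit the observation, made just before the statement, that the Gibbs vector $\mathfrak{m}(\nu)$ also solves the adjusted backwards-inductive problems \eqref{tilde_entropy_minimisation_problem}, in which the penalty has been decomposed as $\mathrm{KL}(m_t||\gamma^\sigma) = \mathrm{KL}(m_t||\tilde{\gamma}_p^\sigma) + E_{m_t}^{(p)} + \mathrm{const}$. Fix $t$; in the backward recursion the future measures $\mathfrak{m}_{t+1:T-1}(\nu)$ are already determined, and $\mathfrak{m}_t(\nu)$ minimises the $t$-th objective in \eqref{tilde_entropy_minimisation_problem}. Testing this minimiser against the competitor $\tilde{\gamma}_p^\sigma$ and using that $\widehat{Q}_t \geq 0$ (Assumption \ref{verifiable assumptions}(i)), $\mathrm{KL}(\mathfrak{m}_t(\nu)||\tilde{\gamma}_p^\sigma) \geq 0$ and $\mathrm{KL}(\tilde{\gamma}_p^\sigma||\tilde{\gamma}_p^\sigma) = 0$, all the non-$E^{(p)}$ terms on the minimiser side may be dropped, leaving
\[
    E_{\mathfrak{m}_t(\nu)}^{(p)} \;\leq\; E_{\tilde{\gamma}_p^\sigma}^{(p)} + \mathbb{E}_{Z\sim\nu}\big[\widehat{Q}_t\big(X_t^{\mathrm{ref}}(Z), \tilde{\gamma}_p^\sigma, \mathfrak{m}_{t+1:T-1}(\nu), Z\big)\big].
\]
Everything then reduces to bounding this last expectation by a constant multiple of $\mathbb{E}_{Z\sim\nu}[P(Z)^2]$, uniformly in $\nu$.

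For that I would unfold $\widehat{Q}_t$ through \eqref{ADP problem} as a sum of running costs $c_s^*$ evaluated along the state trajectory started from $X_t^{\mathrm{ref}}(Z)$ under the controls $(\tilde{\gamma}_p^\sigma, \mathfrak{m}_{t+1}(\nu), \ldots)$, and then apply the growth conditions of Assumption \ref{verifiable assumptions} in the same crude-Hölder spirit as in the proof of Theorem \ref{minimisers}: the quadratic growth of $c_t^*$ and $\Phi$ (ii), the linear growth of $h_t$ (iv), the bound $\norm{u_m(x)} \leq C(1+\norm{x})(1+E_m^{(2)})$ obtained from (vi), and the estimate $\norm{X_t^{\mathrm{ref}}(Z)} \leq C(1+\norm{x_0})P(Z)$ already used there. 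Iterating the state recursion bounds each $\norm{X_s}$ by a constant multiple of $(1+\norm{X_t^{\mathrm{ref}}(Z)})\prod_{r=t}^{s-1}(1+E_{m_r}^{(2)})(1+\norm{Z_{r+1}})$; squaring, collecting the $Z$-factors into powers of $P(Z)$, and absorbing the finite constants $E_{\tilde{\gamma}_p^\sigma}^{(k)}$ for $k \leq p$ (finite by (vii)) yields
\[
    \mathbb{E}_{Z\sim\nu}\big[\widehat{Q}_t\big(X_t^{\mathrm{ref}}(Z), \tilde{\gamma}_p^\sigma, \mathfrak{m}_{t+1:T-1}(\nu), Z\big)\big] \;\leq\; C\,\mathbb{E}_{Z\sim\nu}\big[P(Z)^2\big]\prod_{r=t+1}^{T-1}\big(1+E_{\mathfrak{m}_r(\nu)}^{(p)}\big),
\]
where I have used $E_m^{(4)} \leq 1 + E_m^{(p)}$ since $p \geq 4$.

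A backwards induction on $t$ then closes the first estimate: at $t = T-1$ the product is empty, giving $E_{\mathfrak{m}_{T-1}(\nu)}^{(p)} \leq E_{\tilde{\gamma}_p^\sigma}^{(p)} + C\mathbb{E}_{Z\sim\nu}[P(Z)^2]$; for $t < T-1$ the bounds already established for $t+1, \ldots, T-1$ control the product factor, and, since $\mathbb{E}_{Z\sim\nu}[P(Z)^2] \geq 1$ while $E_{\tilde{\gamma}_p^\sigma}^{(p)}$ is a fixed constant, the accumulated terms re-absorb into the stated $E_{\mathfrak{m}_t(\nu)}^{(p)} \leq T E_{\tilde{\gamma}_p^\sigma}^{(p)} + C\mathbb{E}_{Z\sim\nu}[P(Z)^2]$ for every $t$. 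The second assertion is then immediate: each factor satisfies $1 + E_{\mathfrak{m}_t(\nu)}^{(p)} \leq 1 + T E_{\tilde{\gamma}_p^\sigma}^{(p)} + C\mathbb{E}_{Z\sim\nu}[P(Z)^2] \leq C'\mathbb{E}_{Z\sim\nu}[P(Z)^2]$ (again absorbing the constant term, as $P(Z) \geq 1$), so multiplying the $T$ factors and applying Jensen's inequality in the form $\mathbb{E}_{Z\sim\nu}[P(Z)^2]^T \leq \mathbb{E}_{Z\sim\nu}[P(Z)^{2T}]$ gives $\prod_{t=0}^{T-1}(1+E_{\mathfrak{m}_t(\nu)}^{(p)}) \leq C\mathbb{E}_{Z\sim\nu}[P(Z)^{2T}]$.

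I expect the main obstacle to be the moment bookkeeping in the middle step: compounding the state-process growth, the network growth, and the quadratic cost growth through the $Q$-function so that the power of $P(Z)$ stays at $2$ and the dependence on the previously fitted measures $\mathfrak{m}_{t+1:T-1}(\nu)$ enters only through a product of their $p$-th moments, which the backward induction can then re-absorb into a constant independent of $\nu$. By comparison, the variational step and the final product-and-Jensen argument are essentially one-liners.
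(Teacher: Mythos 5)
Your overall plan — test the variational problem against the competitor $\tilde\gamma_p^\sigma$, drop the nonnegative terms, and then propagate moment bounds through the $Q$-function via the growth conditions — is the right family of ideas, but the specific route you take contains a genuine gap at the ``re-absorb'' step of the backward induction.

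By testing only the $t$-th problem in \eqref{tilde_entropy_minimisation_problem} (replacing $m_t$ by $\tilde\gamma_p^\sigma$ while keeping the fitted $\mathfrak{m}_{t+1:T-1}(\nu)$ in place), you arrive, correctly, at
\[
E_{\mathfrak{m}_t(\nu)}^{(p)} \;\leq\; E_{\tilde\gamma_p^\sigma}^{(p)} + C\,\mathbb{E}_{Z\sim\nu}\big[P(Z)^2\big]\prod_{r=t+1}^{T-1}\big(1+E_{\mathfrak{m}_r(\nu)}^{(p)}\big).
\]
But the product on the right is not a constant: it depends on $\nu$ through the fitted measures. If you substitute the inductively obtained bounds $1 + E_{\mathfrak{m}_r(\nu)}^{(p)} \leq C'\,\mathbb{E}_{Z\sim\nu}[P(Z)^2]^{a_r}$ back into this inequality, the exponents compound as $a_t = 1 + \sum_{r > t} a_r$ (so $a_{T-1-k} = 2^k$), and the power of $\mathbb{E}_{Z\sim\nu}[P(Z)^2]$ grows exponentially in $T - t$ rather than staying at one. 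A constant $E_{\tilde\gamma_p^\sigma}^{(p)}$ can indeed be absorbed, but a factor $\mathbb{E}_{Z\sim\nu}[P(Z)^2]^{T-t}$ cannot be brought back down to $\mathbb{E}_{Z\sim\nu}[P(Z)^2]$ for general $\nu$, so the stated bound — and consequently the $\mathbb{E}_{Z\sim\nu}[P(Z)^{2T}]$ bound for the product, and ultimately the moment order $q$ required in Theorem \ref{minimisers} — does not come out of this argument.

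The paper avoids the self-referential dependence entirely by not doing a per-step comparison at all. Using dynamic programming it observes that $\mathfrak{m}(\nu)$ is also the minimiser of the \emph{global} regularised objective $\mathbf{m} \mapsto \mathbb{E}_{Z\sim\nu}\big[\widehat{Q}_0(x_0, m_{0:T-1}, Z)\big] + \sum_{t}\big(\tfrac{\sigma^2}{2\beta^2}\mathrm{KL}(m_t\|\tilde\gamma_p^\sigma) + E_{m_t}^{(p)}\big)$, and then tests against the constant competitor $(\tilde\gamma_p^\sigma, \ldots, \tilde\gamma_p^\sigma)$ simultaneously at every time step. Because the competitor is fixed, the resulting control on $\widehat{Q}_0(x_0, \tilde\gamma_p^\sigma, \ldots, \tilde\gamma_p^\sigma, Z)$ involves only the deterministic moments $E_{\tilde\gamma_p^\sigma}^{(2)}$ and the path factor $P(Z)^2$, so no fitted measure ever appears on the right-hand side. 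That is what produces the bound with $\mathbb{E}_{Z\sim\nu}[P(Z)^2]$ to the first power, and it is what your per-step comparison cannot replicate without an extra a priori moment bound on the $\mathfrak{m}_r(\nu)$ that you do not have.
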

\begin{proof}
We begin by noting, from dynamic programming, that 
\[ \mathfrak{m}(\nu) = \argmin_{(m_t)_{t=0}^{T-1}\subset \mathcal{P}_p(\Theta)} \Bigg\{ \mathbb{E}_{Z\sim\nu}\big[\widehat{Q}_0(x_0, \mathfrak{m}(\nu), Z)\big] + \sum_{t=0}^{T-1} \Big(\frac{\sigma^2}{2\beta^2}\mathrm{KL}(\mathfrak{m}_t(\nu)||\tilde\gamma_p^\sigma) + E_{\mathfrak{m}_t(\nu)}^{(p)}\Big)\Bigg\}.\]
From admissibility of $(\tilde\gamma_p^\sigma, \ldots, \tilde\gamma_p^\sigma)$ for the above objective, we see that
\begin{align*}  & \mathbb{E}_{Z\sim\nu}\big[\widehat{Q}_0(x_0, \mathfrak{m}(\nu), Z)\big] + \sum_{t=0}^{T-1} \Big(\frac{\sigma^2}{2\beta^2}\mathrm{KL}(\mathfrak{m}_t(\nu)||\tilde\gamma_p^\sigma) + E_{\mathfrak{m}_t(\nu)}^{(p)}\Big) \\ & \quad \leq \mathbb{E}_{Z\sim\nu}\big[\widehat{Q}_0(x_0, \tilde{\gamma}_p^\sigma, \ldots, \tilde{\gamma}_p^\sigma, Z)\big] + T E_{\tilde\gamma_p^\sigma}^{(p)}. \end{align*}
In particular, from nonnegativity of all left-hand terms, for any $t$,
\[ E_{\mathfrak{m}_t(\nu)}^{(p)} \leq \mathbb{E}_{Z\sim\nu}\big[\widehat{Q}_0(x_0, \tilde\gamma_p^\sigma, \ldots, \tilde\gamma_p^\sigma, Z)\big] + T E_{\tilde\gamma_p^\sigma}^{(p)}.\]
Noting now the quadratic growth conditions from Assumption \ref{verifiable assumptions}(ii, vi), we may bound
\begin{align*}
    \widehat{Q}_0(x_0, \tilde\gamma_p^\sigma, \ldots, \tilde\gamma_p^\sigma, Z) & = \sum_{t=0}^{T-1}c_t^*(X_t^{\tilde\gamma_p^\sigma}(Z), \tilde\gamma_p^\sigma) \\ 
    & \leq C\sum_{t=0}^{T-1}(1+\norm{X_t^{\tilde\gamma_p^\sigma}(Z)}^2)(1+E_{\tilde\gamma_p^\sigma}^{(2)})^2 \\
    & \leq C(1+\norm{x_0}^2)\sum_{t=0}^{T-1}\Bigg((1+E_{\tilde{\gamma_p^\sigma}}^{(2)})^2\prod_{s=0}^{t-1}(1+E_{\tilde\gamma_p^\sigma}^{(2)})^2 (1+\norm{Z_{s+1}})^2\Bigg) \\
    & \leq C(1+\norm{x_0}^2)P(Z)^2 (1+E_{\tilde\gamma_p^\sigma}^{(2)})^{2T}.
\end{align*}
Taking expectations over $\nu$ and absorbing $(1+\norm{x_0}^2)(1+E_{\tilde{\gamma}_p^\sigma}^{(2)})^{2T}$ into $C$ provides the claim.
\end{proof}

Note that, as discussed in the proof of \cite[Lemma 5.3]{sam_ge}, Jensen's inequality implies that the above bound then holds for all moments of $\mathfrak{m}_t(\nu)$, up to the $p$-th moment. 
\section{Bounding the Generalisation Error}\label{bounding generalisation error section}
\begin{theorem}\label{1/n theoretical}
    Suppose that Assumptions \ref{verifiable assumptions} holds. Then the unique minimiser to the stochastic control problem \eqref{entropy_minimisation_problem} described in Theorem \ref{minimisers} exhibits generalisation error with upper bound
    \[ \mathrm{gen}(\mathfrak{m}^{\beta,\sigma}(\nu_n),\nu_{\mathrm{pop}}) \leq \frac{c}{n}\frac{2\beta^2}{\sigma^2} \mathbb{E}_{Z^{(1)}\sim\nu_{\mathrm{pop}}}\Big[(1+\norm{Z^{(1)}})^A\Big]\]
    for some $A \leq 4T + 14$.
    In particular, when $\nu\in\mathcal{P}_{q}(\mathcal{Z}^{T})$ for $q \geq A, p \geq 8$, then the generalisation error for the Gibbs vector is of the scale $n^{-1}$.  
\end{theorem}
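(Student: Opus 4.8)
The plan is to start from the closed-form representation of the generalisation error in Theorem \ref{generalisation error}, which already isolates the prefactor $\tfrac1n$, and to bound the integrand inside the expectation, uniformly in the interpolation parameters $\lambda,\tilde\lambda,\tilde\lambda_2$ and in $\theta$, by $\tfrac{2\beta^2}{\sigma^2}$ times a polynomial in $\norm{Z^{(1)}}$ and $\norm{\widetilde{Z}^{(1)}}$ whose coefficients depend only on moments of $\tilde{\gamma}_p^\sigma$ and of the Gibbs vector $\mathfrak{m}(\nu_n)$. Taking expectations and invoking Lemma \ref{tilde gamma lemma} then collapses everything to a single moment $\mathbb{E}_{Z\sim\nu_{\mathrm{pop}}}[(1+\norm{Z})^A]$, leaving the $\tfrac1n$ and $\tfrac{2\beta^2}{\sigma^2}$ factors intact.

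First I would estimate each building block of the integrand of Theorem \ref{generalisation error} using Assumption \ref{verifiable assumptions}, much as in the proof of Theorem \ref{minimisers}: the cost sensitivities $\tfrac{\delta c_t^*}{\delta m_t}(x,m;\theta)=\partial_u c_t^*\bigl(\phi(x,\theta)-\mathbb{E}_{\theta'\sim m}[\phi(x,\theta')]\bigr)$ are controlled by (ii), (iii), (vi); the reference and interpolated states $X_t^{\mathrm{ref}}(\widetilde{Z}^{(1)})$ and $X_t^{t-1,\mathbf{m}}(\widetilde{Z}^{(1)})$ by (iv), (vi) and Lemma \ref{norm x}, giving bounds of the form $C(1+\norm{x_0})P(\widetilde{Z}^{(1)})\prod_s(1+E_{\mathfrak{m}_s}^{(2)})$; the terms $g_t$ and $\tfrac{\delta g_t}{\delta m_{t-1}}$ by the linear growth (iv) and differentiability (v) of $h$ together with the $\phi$-bound (vi); and $f_t^\lambda=\partial_x\widehat{Q}_t$ and $\tfrac{\delta f_t^\lambda}{\delta m_s}$ by expanding through the chain rule and using (iii), (v) and the Fréchet-state estimates of Appendix \ref{appB} (Lemma \ref{frechet_state}). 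The genuinely new ingredient is the data-sensitivity $\tfrac{\delta\mathfrak{m}_s}{\delta\nu}(\nu;Z)(\mathrm{d}\theta)$: differentiating the fixed-point identity \eqref{Szpruch first order condition} and solving $\tfrac{\delta\mathfrak{m}_s}{\delta\nu}=(I-\partial_m M_s)^{-1}\partial_\nu M_s$ (the implicit-function step carried out in Appendix \ref{appB}), this signed measure equals $\tfrac{2\beta^2}{\sigma^2}$ times $\mathfrak{m}_s(\mathrm{d}\theta)$ multiplied by a centred function of polynomial growth in $\norm{\theta}$ and in $P(Z)$; the convexity Assumption \ref{verifiable assumptions}(i), together with strict convexity of the relative entropy, makes $I-\partial_m M_s$ boundedly invertible uniformly in $\nu$, so exactly one factor $\tfrac{2\beta^2}{\sigma^2}$ is produced, and integrating a polynomially growing function of $\theta$ against it yields moments of $\mathfrak{m}_s$ up to order $p$ (this is where $p\ge 8$ enters, the polynomial weight accumulated from $\tfrac{\delta f^\lambda}{\delta m}$, $\tfrac{\delta\widehat{Q}}{\delta m}$ and the inversion being of degree roughly $\norm{\theta}^8$).

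Combining the blocks, each summand of Theorem \ref{generalisation error} is bounded by $\tfrac{2\beta^2}{\sigma^2}\,P(Z^{(1)})^{a}P(\widetilde{Z}^{(1)})^{b}$ times a polynomial of bounded degree in the moments $E_{\mathfrak{m}_t(\nu_n)}^{(\ell)}$, $\ell\le p$, and in moments of $\tilde{\gamma}_p^\sigma$, with $T$-linear exponents $a,b$. I would then use Lemma \ref{tilde gamma lemma} (and its Jensen corollary) to replace the random moments of $\mathfrak{m}_t(\nu_n)$ by $C\,\mathbb{E}_{Z\sim\nu_n}[P(Z)^2]=\tfrac{C}{n}\sum_i P(Z^{(i)})^2$, take $\mathbb{E}_{\mathbf{Z}_n,\widetilde{Z}^{(1)}}$, exploit independence of $\widetilde{Z}^{(1)}$ from $\mathbf{Z}_n$, apply Hölder's inequality to peel off the resampled-out point $Z^{(1)}$ (a training point, hence correlated with $\nu_n$), and apply the power-mean inequality $\bigl(\tfrac1n\sum a_i\bigr)^k\le\tfrac1n\sum a_i^k$ to bound $\mathbb{E}_{\mathbf{Z}_n}\bigl[(\mathbb{E}_{\nu_n}[P^2])^k\bigr]\le\mathbb{E}_{\nu_{\mathrm{pop}}}[P^{2k}]$. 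Since $P(Z)\le(1+\norm{Z})^T$, every surviving term is dominated by $\mathbb{E}_{Z\sim\nu_{\mathrm{pop}}}[(1+\norm{Z})^A]$ with $A$ the sum of the finitely many $T$-linear exponents; the bookkeeping — two $P^{2T}$-type contributions (one from the moment products, one from the interpolated states and sensitivities) combined through Hölder, plus a bounded number of growth-condition factors — gives $A\le 4T+14$. Finiteness when $\nu_{\mathrm{pop}}\in\mathcal{P}_q(\mathcal{Z}^T)$ with $q\ge A$ is then immediate, and since the whole expression is $O(n^{-1})$ in $n$, so is the generalisation error.

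The main obstacle is the sensitivity estimate on $\tfrac{\delta\mathfrak{m}_t}{\delta\nu}$: because the Gibbs vector is only implicitly characterised as a fixed point of $M_t$, and $M_t$ itself depends on the previously determined $\mathfrak{m}_s$, $s>t$, through $\widehat{Q}_t$ (so the entire backward recursion must be differentiated in $\nu$), establishing that this derivative exists, is $\mathcal{C}^1$ in $\nu$, and has polynomial-in-$\theta$, polynomial-in-$Z$ growth with a single clean $\tfrac{2\beta^2}{\sigma^2}$ factor requires a uniform spectral-gap / strong-convexity bound — furnished by Assumption \ref{verifiable assumptions}(i) — ensuring $I-\partial_m M_t$ is invertible with controlled inverse; this is the technical heart, relegated to Appendix \ref{appB}, and it is also what forces $p\ge 8$. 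The secondary difficulty is purely combinatorial: tracking all polynomial degrees through the long telescoping decomposition of Theorem \ref{generalisation error} and through the Hölder splitting to certify that $A$ remains linear in $T$ rather than growing like $T^2$.
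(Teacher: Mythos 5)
Your proposal matches the paper's proof closely: both start from the integral representation in Theorem \ref{generalisation error}, bound the cost/transition building blocks through the growth lemmas of Appendix \ref{appA} (Lemmas \ref{norm x}, \ref{frechet_state}, \ref{Q deriv x}, \ref{Q frechet deriv}, \ref{Q double deriv x}), exploit the implicit-function/covariance structure of $\tfrac{\delta\mathfrak{m}_t}{\delta\nu}$ to extract exactly one factor $\tfrac{2\beta^2}{\sigma^2}$ — the paper's Lemmas \ref{Hilbert-Schmidt}, \ref{cov} and \ref{S_t expectation inequality} express this precisely as the spectrally controlled inverse $\bigl(\mathrm{id}+\tfrac{2\beta^2}{\sigma^2}\mathcal{C}^t_m\bigr)^{-1}$, which is the operator you call $(I-\partial_m M_s)^{-1}$ and is boundedly invertible by the positivity coming from Assumption \ref{verifiable assumptions}(i) — and then collapse the random Gibbs-vector moments via Lemma \ref{tilde gamma lemma}, Jensen and Hölder. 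The moment bookkeeping ($p\geq 8$, $q$ linear in $T$) and the final reduction to a single moment of $\nu_{\mathrm{pop}}$ are likewise the same, so this is essentially the paper's route.
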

\begin{proof}
    Recalling the form of the generalisation error found in Theorem \ref{generalisation error}, we begin by finding bounds on the linear functional derivatives of the running costs $c_t^*$, and the functions $f_t^\lambda, g_t$, for all $t$. 
    \par
    As a pair of prerequisite results we begin by bounding $f_t^\lambda$ and $g_t$. Using Lemmas \ref{tilde gamma lemma}, \ref{norm x}, \ref{Q deriv x}, for general $\nu\in\mathcal{P}_q(\mathcal{Z}^T)$, we find that 
    \begin{align}
        f_{t}^\lambda(\mathfrak{m}_{t-1:T-1}(\nu)) & = \frac{\delta \widehat{Q}_{t}}{\partial x}\Big(X_{t}^{\mathrm{ref}}(\widetilde{Z}^{(1)}) + \lambda\big(X_{t}^{t-1,\mathfrak{m}_{t-1}(\nu)}(\widetilde{Z}^{(1)}) + X_{t}^{\mathrm{ref}}(\widetilde{Z}^{(1)})\big), \mathfrak{m}_{t:T-1}(\nu)\Big) \nonumber\\
        & \leq C(1+\lambda\norm{X_{t}^{t-1,\mathfrak{m}_{t-1}(\nu)}(\widetilde{Z}^{(1)})}+(1-\lambda)\norm{X_{t}^{\mathrm{ref}}(\widetilde{Z}^{(1)})})\nonumber\\
        & \quad \times \prod_{s=t}^{T-1}(1+E_{\mathfrak{m}_{s}(\nu)}^{(4)})(1+\norm{\widetilde{Z}^{(1)}_{s+1}}) \nonumber\\
        & \leq C(1+\norm{x_0})(1+E_{\mathfrak{m}_{t-1}(\nu)}^{(2)})\prod_{s=0}^{T-1}(1+\norm{\widetilde{Z}^{(1)}_{s+1}})\prod_{s=t}^{T-1}(1+E_{\mathfrak{m}_{s}(\nu)}^{(4)}) \nonumber \\
        & \leq C P(\widetilde{Z}^{(1)}) \mathbb{E}_{Z\sim\nu}[P(Z)^{2(T-t+1)}],\label{f bound}
    \end{align}
    and more simply
    \begin{align}
        g_{t}(\mathfrak{m}_{t-1}(\nu)) & \leq \norm{X_{t}^{t-1,\mathfrak{m}_{t-1}(\nu)}(\widetilde{Z}^{(1)})}+\norm{X_{t}^{\mathrm{ref}}(\widetilde{Z}^{(1)})} \nonumber \\
        & \leq C(1+\norm{x_0})(1+E_{\mathfrak{m}_{t-1}(\nu)}^{(2)})\prod_{s=0}^{t-1}(1+\norm{\widetilde{Z}^{(1)}_{s+1}}) \nonumber\\
        & \leq C P(\widetilde{Z}^{(1)})\mathbb{E}_{Z\sim\nu}[P(Z)^2],\label{g bound}
    \end{align}
    where we absorb moments of $\tilde\gamma_p^\sigma$ and powers of $\norm{x_0}$ into $C$ as in Lemma \ref{tilde gamma lemma}.
    \par
    Returning to bounding linear functionals, we start with the running costs $c_t^*$, using Assumption \ref{verifiable assumptions}(vi) and Lemma \ref{norm x}, 

    \begin{align}
        \frac{\delta c_t^*}{\delta m_t}\Big(X_t^{\mathrm{ref}}(\widetilde{Z}^{(1)}), \mathfrak{m}_t^{\tilde{\lambda}}; \theta\Big) & = \partial_u c_t^* (X_t^{\mathrm{ref}}(\widetilde{Z}^{(1)}), \mathfrak{m}_t^{\tilde\lambda})\big(\phi(X_t^{\mathrm{ref}}(\widetilde{Z}^{(1)}), \theta) - \mathbb{E}_{\theta\sim\mathfrak{m}_t^{\tilde\lambda}}[\phi(X_t^{\mathrm{ref}}(\widetilde{Z}^{(1)})]\big) \nonumber \\
        & \leq C(1+\norm{X_t^{\mathrm{ref}}(\widetilde{Z}^{(1)})}^2)(1+\norm{\theta}^2+E_{\mathfrak{m}_t^{\tilde\lambda}}^{(2)})(1+E_{\mathfrak{m}_t^{\tilde{\lambda}}}^{(2)}) \nonumber \\
        & \leq C(1+\norm{\theta}^2+E_{\mathfrak{m}_t^{\tilde\lambda}}^{(2)})(1+E_{\mathfrak{m}_t^{\tilde\lambda}}^{(2)})\prod_{s=0}^{t-1}(1+\norm{\widetilde{Z}^{(1)}_{s+1}})^2 \nonumber \\
        & \leq C(1+\norm{\theta}^2+E_{\mathfrak{m}_t^{\tilde\lambda}}^{(2)})(1+E_{\mathfrak{m}_t^{\tilde\lambda}}^{(2)})P(\widetilde{Z}^{(1)})^2. \label{first}
    \end{align}
    Recalling that $\mathfrak{m}_t^{\tilde\lambda} := (1-\tilde\lambda)\mathfrak{m}_t(\nu_{n,(1)}) + \tilde\lambda \mathfrak{m}_t(\nu_n),$
    using Lemma \ref{tilde gamma lemma} we write
    \begin{align}
        E_{\mathfrak{m}_t^{\tilde\lambda}}^{(2)} & = (1-\tilde\lambda) E_{\mathfrak{m}_t(\nu_{n,(1)})}^{(2)} + \tilde\lambda E_{\mathfrak{m}_t(\nu_n)}^{(2)}\nonumber \\
        & \leq C(1-\tilde\lambda)\mathbb{E}_{Z\sim\nu_{n,(1)}}[P(Z)^2] + C\tilde\lambda \mathbb{E}_{Z\sim\nu_n}[P(Z)^2]\nonumber \\
        & = C\Big(\mathbb{E}_{Z\sim\nu_n}[P(Z)^2] + \frac{1-\tilde\lambda}{n}\big(P(\widetilde{Z}^{(1)})^2 - P(Z^{(1)})^2\big)\Big) \nonumber \\
        & \leq C\Big(\frac{1}{n} \sum_{i=1}^{n} P(Z^{(i)})^2 + P(\widetilde{Z}^{(1)})^2 + P(Z^{(1)})^2\Big). \label{perturbed moment bound}
    \end{align}
    Note then from Lemma \ref{cov} that
    \begin{align*}
        & \int_\Theta \sum_{t=0}^{T-1} \Bigg\{\frac{\delta c_t^*}{\delta m_t}\Big(X_t^{\mathrm{ref}}(\widetilde{Z}^{(1)}), \mathfrak{m}_t^{\tilde{\lambda}};\theta\Big)\frac{\delta \mathfrak{m}_t}{\delta \nu}\Big(\nu_n^{\tilde{\lambda}_2}; Z\Big)\Bigg\}\Bigg|^{Z=Z^{(1)}}_{Z=\widetilde{Z}^{(1)}}(\mathrm{d}\theta) \\
        & \quad = - \frac{2\beta^2}{\sigma^2}\sum_{t=0}^{T-1}\mathbb{C}\mathrm{ov}_{\theta\sim\mathfrak{m}_t(\nu_n^{\tilde\lambda_2})}\Bigg[\frac{\delta c_t^*}{\delta m_t}\Big(X_t^{\mathrm{ref}}(\widetilde{Z}^{(1)}), \mathfrak{m}_t^{\tilde{\lambda}};\theta\Big), \frac{\delta S_t}{\delta \nu}\Big(\nu_n^{\tilde\lambda_2}, \theta; Z\Big)\Big|^{Z=Z^{(1)}}_{Z=\widetilde{Z}^{(1)}}\Bigg] \\
        & \quad \leq \frac{2\beta^2}{\sigma^2}\sum_{t=0}^{T-1}\mathbb{E}_{\theta\sim \mathfrak{m}_t(\nu_n^{\tilde\lambda_2})}\Bigg[\Bigg(\frac{\delta c_t^*}{\delta m_t}\Big(X_t^{\mathrm{ref}}(\widetilde{Z}^{(1)}), \mathfrak{m}_t^{\tilde{\lambda}};\theta\Big)\Bigg)^2\Bigg]^{\frac{1}{2}}\mathbb{E}_{\theta\sim\mathfrak{m}_t(\nu_n^{\tilde\lambda_2})}\Bigg[\Bigg(\frac{\delta S_t}{\delta \nu}\Big(\nu_n^{\tilde\lambda_2}, \theta; Z\Big)\Bigg|^{Z=Z^{(1)}}_{Z=\widetilde{Z}^{(1)}}\Bigg)^2\Bigg]^{\frac{1}{2}},
    \end{align*}
    so we need to bound 
    \begin{align*}
    & \mathbb{E}_{\theta\sim \mathfrak{m}_t(\nu_n^{\tilde\lambda_2})}\Bigg[\Bigg(\frac{\delta c_t^*}{\delta m_t}\Big(X_t^{\mathrm{ref}}(\widetilde{Z}^{(1)}), \mathfrak{m}_t^{\tilde{\lambda}};\theta\Big)\Bigg)^2\Bigg]^{\frac{1}{2}} \\
    & \quad \leq C\mathbb{E}_{\theta\sim \mathfrak{m}_t(\nu_n^{\tilde\lambda_2})}\big[(1+\norm{\theta}^2+E_{\mathfrak{m}_t^{\tilde\lambda}}^{(2)})^2\big]^{\frac{1}{2}}(1+E_{\mathfrak{m}_t^{\tilde\lambda}}^{(2)})P(\widetilde{Z}^{(1)})^2 \\
    & \quad \leq C(1+E_{\mathfrak{m}_t(\nu_n^{\tilde\lambda_2})}^{(4)}+E_{\mathfrak{m}_t^{\tilde\lambda}}^{(4)})(1+E_{\mathfrak{m}_t^{\tilde\lambda}}^{(2)})P(\widetilde{Z}^{(1)})^2 \\
    & \quad \leq CP(\widetilde{Z}^{(1)})^2\Big(\frac{1}{n} \sum_{i=1}^{n} P(Z^{(i)})^2 + P(\widetilde{Z}^{(1)})^2 + P(Z^{(1)})^2\Big)^2,
    \end{align*}
    where we simplified $E_{\mathfrak{m}_t(\nu_n^{\tilde\lambda_2})}^{(4)}$ using the same procedure as for \eqref{perturbed moment bound}. 
    \par
    Finally, applying Lemma \ref{S_t expectation inequality} and \eqref{perturbed moment bound} we bound
    \begin{align}
        & \mathbb{E}_{\theta\sim \mathfrak{m}_t(\nu_n^{\tilde\lambda_2})}\Bigg[\Bigg(\frac{\delta c_t^*}{\delta m_t}\Big(X_t^{\mathrm{ref}}(\widetilde{Z}^{(1)}), \mathfrak{m}_t^{\tilde{\lambda}};\theta\Big)\Bigg)^2\Bigg]^{\frac{1}{2}}\mathbb{E}_{\theta\sim\mathfrak{m}_t(\nu_n^{\tilde\lambda_2})}\Bigg[\Bigg(\frac{\delta S_t}{\delta \nu}\Big(\nu_n^{\tilde\lambda_2}, \theta; Z\Big)\Bigg|^{Z=Z^{(1)}}_{Z=\widetilde{Z}^{(1)}}\Bigg)^2\Bigg]^{\frac{1}{2}} \nonumber \\
        & \leq C P(\widetilde{Z}^{(1)})^2 \Big( P(Z^{(1)})^2 + P(\widetilde{Z}^{(1)})^2 \Big)\Big(\frac{1}{n} \sum_{i=1}^{n} P(Z^{(i)})^2 + P(\widetilde{Z}^{(1)})^2 + P(Z^{(1)})^2\Big)^2 \nonumber \\& \quad \times (1+E_{\mathfrak{m}_t(\nu_n^{\tilde\lambda_2})}^{(2)})\prod_{s=t}^{T-1}(1+E_{\mathfrak{m}_s(\nu_n^{\tilde\lambda_2})}^{(4)}) \nonumber \\
        & \leq C P(\widetilde{Z}^{(1)})^2 \Big(P(Z^{(1)})^2 + P(\widetilde{Z}^{(1)})^2 \Big)\Big(\frac{1}{n} \sum_{i=1}^{n} P(Z^{(i)})^2 + P(\widetilde{Z}^{(1)})^2 + P(Z^{(1)})^2\Big)^{T-t+3}. \nonumber
    \end{align}
    Over the full sum, we bound uniformly over $t$ to find that 
    \begin{align}
        & \sum_{t=0}^{T-1}\mathbb{E}_{\theta\sim \mathfrak{m}_t(\nu_n^{\tilde\lambda_2})}\Bigg[\Bigg(\frac{\delta c_t^*}{\delta m_t}\Big(X_t^{\mathrm{ref}}(\widetilde{Z}^{(1)}), \mathfrak{m}_t^{\tilde{\lambda}};\theta\Big)\Bigg)^2\Bigg]^{\frac{1}{2}}\mathbb{E}_{\theta\sim\mathfrak{m}_t(\nu_n^{\tilde\lambda_2})}\Bigg[\Bigg(\frac{\delta S_t}{\delta \nu}\Big(\nu_n^{\tilde\lambda_2}, \theta; Z\Big)\Bigg|^{Z=Z^{(1)}}_{Z=\widetilde{Z}^{(1)}}\Bigg)^2\Bigg]^{\frac{1}{2}} \nonumber \\
        & \leq C P(\widetilde{Z}^{(1)})^2 \Big(P(Z^{(1)})^2 + P(\widetilde{Z}^{(1)})^2 \Big)\Big(\frac{1}{n} \sum_{i=1}^{n} P(Z^{(i)})^2 + P(\widetilde{Z}^{(1)})^2 + P(Z^{(1)})^2\Big)^{T+3}. \label{first final}
    \end{align}
    For the remaining linear functional derivatives we aim to bound in a similar fashion, but will simplify notation for clarity. Defining $\widetilde{X}_t^{\lambda, m_{t-1}}(Z):= X_t^{\mathrm{ref}}(Z) + \lambda\big(X_t^{t-1, m_{t-1}}(Z) - X_t^{\mathrm{ref}}(Z)\big)$ we begin with 
    \begin{align*}
        & \frac{\delta f_t^\lambda}{\delta m_{t-1}}\big(m_{t-1:T-1}; \theta)  \\ & = \frac{\delta}{\delta m_{t-1}}\frac{\partial \widehat{Q}_t}{\partial x}\big(\widetilde{X}_t^{\lambda, m_{t-1}}(\widetilde{Z}^{(1)}), m_t, \widetilde{Z}^{(1)}; \theta\big) \\ 
        & = \frac{\partial^2 \widehat{Q}_t}{\partial x^2}\big(\widetilde{X}_t^{\lambda, m_{t-1}}(\widetilde{Z}^{(1)}), m_t, \widetilde{Z}^{(1)}\big) \frac{\delta }{\delta m_{t-1}}\widetilde{X}_t^{\lambda, m_{t-1}}(\widetilde{Z}^{(1)}) \\
        & = \lambda\frac{\partial^2 \widehat{Q}_t}{\partial x^2}\big(\widetilde{X}_t^{\lambda, m_{t-1}}(\widetilde{Z}^{(1)}), m_t, \widetilde{Z}^{(1)}\big)\partial_u h_{t-1}(X_{t-1}^{\mathrm{ref}}(\widetilde{Z}^{(1)}), u_{m_{t-1}}(X_{t-1}^{\mathrm{ref}}(\widetilde{Z}^{(1)})), \widetilde{Z}^{(1)}_{t}) \\
        & \quad \times \big(\phi(X_{t-1}^{\mathrm{ref}}(\widetilde{Z}^{(1)}), \theta) - \mathbb{E}_{\theta\sim m_{t-1}}[\phi(X_{t-1}^{\mathrm{ref}}(\widetilde{Z}^{(1)}), \theta)]\big) \\
        & \leq C\lambda(1+\norm{\widetilde{X}_t^{\lambda, m_{t-1}}(\widetilde{Z}^{(1)})})(1+\norm{X_{t-1}^{\mathrm{ref}}(\widetilde{Z}^{(1)})})(1+\norm{\theta}^2 + E_{m_{t-1}}^{(2)})\prod_{s=t}^{T-1}(1+E_{m_s}^{(8)})(1+\norm{\widetilde{Z}^{(1)}_{s+1}}) \\
        & \leq C\lambda (1+E_{m_{t-1}}^{(2)})(1+\norm{\theta}^2 + E_{m_{t-1}}^{(2)})P(\widetilde{Z}^{(1)})^2\prod_{s=t}^{T-1}(1+E_{m_s}^{(8)}),
    \end{align*}
    where we used Assumption \ref{verifiable assumptions} and Lemmas \ref{norm x}, \ref{Q double deriv x} to simplify. 
    \par
    Multiplying by the bound for $g_t$ found in \eqref{g bound}, we find that 
    \begin{align} & 
    \frac{\delta f_t^{\lambda}}{\delta m_{t-1}}\big(\mathfrak{m}_{t-1}^{\tilde\lambda}, \mathfrak{m}_{t:T-1}(\nu_n); \theta)g_t(\mathfrak{m}_{t-1}(\nu_n)) \nonumber \\
    & \quad \leq C(1+E_{\mathfrak{m}_{t-1}^{\tilde\lambda}}^{(2)})(1+\norm{\theta}^2 + E_{\mathfrak{m}_{t-1}^{\tilde\lambda}}^{(2)})P(\widetilde{Z}^{(1)})^3\mathbb{E}_{Z\sim\nu_n}[P(Z)^2] \prod_{s=t}^{T-1}(1+E_{\mathfrak{m}_s(\nu_n)}^{(8)}). \nonumber
    \end{align}
    Again anticipating the use of Lemma \ref{cov}, we apply Lemma \ref{tilde gamma lemma} and \eqref{perturbed moment bound} to find 
    \begin{align}
        & \mathbb{E}_{\theta\sim \mathfrak{m}_{t-1}(\nu_n^{\tilde\lambda_2})}\Bigg[\Bigg(\frac{\delta f_t^\lambda}{\delta m_{t-1}}\big(\mathfrak{m}_{t-1}^{\tilde{\lambda}}, \mathfrak{m}_{t:T-1}(\nu_n);\theta\big)g_t(\mathfrak{m}_{t-1}(\nu_n))\Bigg)^2\Bigg]^{\frac{1}{2}} \nonumber \\
        & \leq CP(\widetilde{Z}^{(1)})^3\Big(\frac{1}{n}\sum_{i=1}^{n}P(Z^{(i)})^2\Big)(1+E_{\mathfrak{m}_{t-1}^{\tilde\lambda}}^{(2)})(1+E_{\mathfrak{m}_{t-1}(\nu_n^{\tilde\lambda_2})}^{(4)}+E_{\mathfrak{m}_{t-1}^{\tilde\lambda}}^{(4)})\prod_{s=t}^{T-1}(1+E_{\mathfrak{m}_s(\nu_n)}^{(8)}) \nonumber \\
        & \leq CP(\widetilde{Z}^{(1)})^3\Big(\frac{1}{n}\sum_{i=1}^{n}P(Z^{(i)})^2\Big)^{T-t+1}\Big(\frac{1}{n} \sum_{i=1}^{n} P(Z^{(i)})^2 + P(\widetilde{Z}^{(1)})^2 + P(Z^{(1)})^2\Big)^2. \nonumber
    \end{align}
    Bounding uniformly over all $t$ and applying Lemmas \ref{cov}, \ref{S_t expectation inequality}, we find
    \begin{align}
        & \int_\Theta\sum_{t=1}^{T-1}\Bigg\{\frac{\delta f_t^\lambda}{\delta m_{t-1}}\big(\mathfrak{m}_{t-1}^{\tilde{\lambda}}, \mathfrak{m}_{t:T-1}(\nu_n);\theta\big)g_t(\mathfrak{m}_{t-1}(\nu_n))\frac{\delta \mathfrak{m}_{t-1}}{\delta \nu}\Big(\nu_n^{\tilde{\lambda}_2}; Z\Big)\Bigg\}\Bigg|^{Z=Z^{(1)}}_{Z=\widetilde{Z}^{(1)}}(\mathrm{d}\theta) \nonumber \\
        & \quad \leq \frac{2\beta^2}{\sigma^2}\sum_{t=1}^{T-1}\mathbb{E}_{\theta\sim \mathfrak{m}_{t-1}(\nu_n^{\tilde\lambda_2})}\Bigg[\Bigg(\frac{\delta f_t^\lambda}{\delta m_{t-1}}\big(\mathfrak{m}_{t-1}^{\tilde{\lambda}}, \mathfrak{m}_{t:T-1}(\nu_n);\theta\big)g_t(\mathfrak{m}_{t-1}(\nu_n))\Bigg)^2\Bigg]^{\frac{1}{2}} \nonumber \\
        & \quad \quad \times \mathbb{E}_{\theta\sim\mathfrak{m}_{t-1}(\nu_n^{\tilde\lambda_2})}\Bigg[\Bigg(\frac{\delta S_{t-1}}{\delta \nu}\Big(\nu_n^{\tilde\lambda_2}, \theta; Z\Big)\Bigg|_{Z=\widetilde{Z}^{(1)}}^{Z=Z^{(1)}}\Bigg)^2\Bigg]^{\frac{1}{2}} \nonumber\\
        & \quad \leq \frac{2\beta^2}{\sigma^2}C P(\widetilde{Z}^{(1)})^3 \Big(P(Z^{(1)})^2 + P(\widetilde{Z}^{(1)})^2\Big)\Big(\frac{1}{n}\sum_{i=1}^{n}P(Z^{(i)})^2\Big)^{T} \nonumber\\
        & \quad \quad \times \Big(\frac{1}{n} \sum_{i=1}^{n} P(Z^{(i)})^2 + P(\widetilde{Z}^{(1)})^2 + P(Z^{(1)})^2\Big)^{2}\sum_{t=1}^{T-1}(1+E_{\mathfrak{m}_{t-1}(\nu_n^{\tilde\lambda_2})}^{(2)})\prod_{s=t-1}^{T-1}(1+E_{\mathfrak{m}_s(\nu_n^{\tilde\lambda_2})}^{(4)}) \nonumber\\
        & \quad \leq \frac{2\beta^2}{\sigma^2}C P(\widetilde{Z}^{(1)})^3 \Big(P(Z^{(1)})^2 + P(\widetilde{Z}^{(1)})^2\Big)\Big(\frac{1}{n}\sum_{i=1}^{n}P(Z^{(i)})^2\Big)^T \nonumber \\
        & \quad \quad \times \Big(\frac{1}{n} \sum_{i=1}^{n} P(Z^{(i)})^2 + P(\widetilde{Z}^{(1)})^2 + P(Z^{(1)})^2\Big)^{T+3}. \label{second final}
    \end{align}
    Moving on, for $s > t-1$, we find from Lemmas \ref{norm x}, \ref{Q frechet deriv},
    \begin{align*}
        & \frac{\delta f_t^\lambda}{\delta m_s}\big(\mathfrak{m}_{t-1:s-1}(\nu_{n,(1)}), \mathfrak{m}_s^{\tilde\lambda}, \mathfrak{m}_{s+1:T-1}(\nu_n); \theta\big)\\
        & \quad = \frac{\delta}{\delta m_s}\frac{\partial \widehat{Q}_t}{\partial x}\big(\widetilde{X}_t^{\lambda, \mathfrak{m}_{t-1}(\nu_{n,(1)})}(\widetilde{Z}^{(1)}), \mathfrak{m}_{t:s-1}(\nu_{n,(1)}), \mathfrak{m}_s^{\tilde\lambda}, \mathfrak{m}_{s+1:T-1}(\nu_n), \widetilde{Z}^{(1)};\theta\big) \\
        & \quad \leq C(1+\norm{\widetilde{X}_t^{\lambda, \mathfrak{m}_{t-1}(\nu_{n,(1)})}(\widetilde{Z}^{(1)})}^2)(1+\norm{\theta}^2 + E_{\mathfrak{m}_s^{\tilde\lambda}}^{(2)})(1+E_{\mathfrak{m}_s^{\tilde\lambda}}^{(4)})\prod_{l=t}^{T-1}(1+\norm{\widetilde{Z}^{(1)}}^2) \\
        & \quad \quad \times \prod_{l=t}^{s-1}(1+E_{\mathfrak{m}_l(\nu_{n,(1)})}^{(8)})\prod_{l=s+1}^{T-1}(1+E_{\mathfrak{m}_l(\nu_n)}^{(8)}) \\
        & \quad \leq C(1+E_{\mathfrak{m}_{t-1}(\nu_{n,(1)})}^{(4)})(1+\norm{\theta}^2+E_{\mathfrak{m}_s^{\tilde\lambda}}^{(2)})(1+E_{\mathfrak{m}_s^{\tilde\lambda}}^{(4)}) P(\widetilde{Z}^{(1)})^2 \\
        & \quad \times \prod_{l=t}^{s-1}(1+E_{\mathfrak{m}_l(\nu_{n,(1)})}^{(8)})\prod_{l=s+1}^{T-1}(1+E_{\mathfrak{m}_l(\nu_n)}^{(8)}),
    \end{align*}
    so that 
    \begin{align}
        & \frac{\delta f_t^\lambda}{\delta m_s}\big(\mathfrak{m}_{t-1:s-1}(\nu_{n,(1)}), \mathfrak{m}_s^{\tilde\lambda}, \mathfrak{m}_{s+1:T-1}(\nu_n); \theta\big) g_{t}(\mathfrak{m}_{t-1}(\nu_n)) \nonumber\\
        & \quad \leq CP(\widetilde{Z}^{(1)})^3\Big(\frac{1}{n}\sum_{i=1}^{n}P(Z^{(i)})^2\Big)(1+E_{\mathfrak{m}_{t-1}(\nu_{n,(1)})}^{(4)})(1+\norm{\theta}^2+E_{\mathfrak{m}_s^{\tilde\lambda}}^{(2)})(1+E_{\mathfrak{m}_s^{\tilde\lambda}}^{(4)}) \nonumber\\
        & \quad \quad \times \prod_{l=t}^{s-1}(1+E_{\mathfrak{m}_l(\nu_{n,(1)})}^{(8)})\prod_{l=s+1}^{T-1}(1+E_{\mathfrak{m}_l(\nu_n)}^{(8)}).\nonumber
    \end{align}
    Anticipating Lemma \ref{cov}, we apply Lemma \ref{tilde gamma lemma} and \eqref{perturbed moment bound} to bound
    \begin{align}
        & \mathbb{E}_{\theta\sim \mathfrak{m}_{s}(\nu_n^{\tilde\lambda_2})}\Bigg[\Bigg(\frac{\delta f_t^\lambda}{\delta m_{s}}\big(\mathfrak{m}_{t-1:s-1}(\nu_{n,(1)}), \mathfrak{m}_s^{\tilde\lambda}, \mathfrak{m}_{s+1:T-1}(\nu_n);\theta\big)g_t(\mathfrak{m}_{t-1}(\nu_n))\Bigg)^2\Bigg]^{\frac{1}{2}} \nonumber \\
        & \leq CP(\widetilde{Z}^{(1)})^3\Big(\frac{1}{n}\sum_{i=1}^{n}P(Z^{(i)})^2\Big)(1+E_{\mathfrak{m}_{t-1}(\nu_{n,(1)})}^{(4)})(1+E_{\mathfrak{m}_s(\nu_n^{\tilde\lambda_2})}^{(4)}+E_{\mathfrak{m}_s^{\tilde\lambda}}^{(4)})(1+E_{\mathfrak{m}_s^{\tilde\lambda}}^{(4)}) \nonumber\\
        & \quad \quad \times \prod_{l=t}^{s-1}(1+E_{\mathfrak{m}_l(\nu_{n,(1)})}^{(8)})\prod_{l=s+1}^{T-1}(1+E_{\mathfrak{m}_l(\nu_n)}^{(8)}) \nonumber \\
        & \leq CP(\widetilde{Z}^{(1)})^3 \Big(\frac{1}{n}\sum_{i=1}^{n}P(Z^{(i)})^2\Big)\Big(\frac{1}{n} \sum_{i=1}^{n} P(Z^{(i)})^2 + P(\widetilde{Z}^{(1)})^2 + P(Z^{(1)})^2\Big)^{T-t+2}. \nonumber
    \end{align}
    Hence applying Lemmas \ref{cov}, \ref{S_t expectation inequality} and bounding, we find that 
    \begin{align}
        & \int_{\Theta}\sum_{t=1}^{T-1}\Bigg\{ \frac{\delta f_t^\lambda}{\delta m_s}\big(\mathfrak{m}_{t-1:s-1}(\nu_{n,(1)}), \mathfrak{m}_s^{\tilde\lambda}, \mathfrak{m}_{s+1:T-1}(\nu_n); \theta\big)g_t(\mathfrak{m}_{t-1}(\nu_n))\frac{\delta \mathfrak{m}_s}{\delta \nu}\Big(\nu_n^{\tilde\lambda_2}; Z\Big)\Bigg\}\Bigg|_{Z=\widetilde{Z}^{(1)}}^{Z=Z^{(1)}}(\mathrm{d}\theta) \nonumber \\
        & \leq \frac{2\beta^2}{\sigma^2} CP(\widetilde{Z}^{(1)})^3\Big(P(Z^{(1)})^2 + P(\widetilde{Z}^{(1)})^2\Big)\Big(\frac{1}{n}\sum_{i=1}^{n}P(Z^{(i)})^2\Big)\nonumber\\& \quad \times \Big(\frac{1}{n} \sum_{i=1}^{n} P(Z^{(i)})^2 + P(\widetilde{Z}^{(1)})^2 + P(Z^{(1)})^2\Big)^{T+1}(1+E_{\mathfrak{m}_s(\nu_n^{\tilde\lambda_2})}^{(2)})\prod_{l=s}^{T-1}(1+E_{\mathfrak{m}_l(\nu_n^{\tilde\lambda_2})}^{(4)}) \nonumber \\
        & \leq \frac{2\beta^2}{\sigma^2} CP(\widetilde{Z}^{(1)})^3\Big(P(Z^{(1)})^2 + P(\widetilde{Z}^{(1)})^2\Big)\Big(\frac{1}{n}\sum_{i=1}^{n}P(Z^{(i)})^2\Big) \nonumber \\
        & \quad \times \Big(\frac{1}{n} \sum_{i=1}^{n} P(Z^{(i)})^2 + P(\widetilde{Z}^{(1)})^2 + P(Z^{(1)})^2\Big)^{2T+1}. 
 \label{final third}
    \end{align}
    For the final term from Theorem \ref{generalisation error}, we first bound the linear functional derivative of $g_{t}$ using Assumption \ref{verifiable assumptions}(v,vi) and Lemma \ref{norm x},
    \begin{align*}
        & \frac{\delta g_{t}}{\delta m_{t-1}}(\mathfrak{m}_{t-1}^{\tilde{\lambda}};\theta) \\ & = \partial_u h_{t-1}(X_{t-1}^{\mathrm{ref}}(\widetilde{Z}^{(1)}), u_{\mathfrak{m}_{t-1}^{\tilde\lambda}}(X_{t-1}^{\mathrm{ref}}(\widetilde{Z}^{(1)})), \widetilde{Z}^{(1)}_t)\big(\phi(X_{t-1}^{\mathrm{ref}}(\widetilde{Z}^{(1)}),\theta) - \mathbb{E}_{\theta\sim\mathfrak{m}_{t-1}^{\tilde\lambda}}[\phi(X_{t-1}^{\mathrm{ref}}(\widetilde{Z}^{(1)}),\theta)]\big) \\
        & \leq CP(\widetilde{Z}^{(1)})(1+\norm{\theta}^2+E_{\mathfrak{m}_{t-1}^{\tilde\lambda}}^{(2)}),
    \end{align*}
    so that 
    \begin{align} 
    & f_t^\lambda(\mathfrak{m}_{t-1:T-1}(\nu_{n,(1)}))\frac{\delta g_{t}}{\delta m_{t-1}}(\mathfrak{m}_{t-1}^{\tilde{\lambda}};\theta)\nonumber\\
    & \quad \leq CP(\widetilde{Z}^{(1)})^2 (1+\norm{\theta}^2+E_{\mathfrak{m}_{t-1}^{\tilde\lambda}}^{(2)})(1+E_{\mathfrak{m}_{t-1}(\nu_{n,(1)})}^{(2)})\prod_{l=t}^{T-1}(1+E_{\mathfrak{m}_l(\nu_{n,(1)})}^{(4)}), \nonumber
    \end{align}
    where we bounded $f_t^\lambda$ using \eqref{f bound}.
    This then allows us to bound 
    \begin{align*}
        & \mathbb{E}_{\theta\sim \mathfrak{m}_{t-1}(\nu_n^{\tilde\lambda_2})}\Bigg[\Bigg(f_t^\lambda\big(\mathfrak{m}_{t-1:T-1}(\nu_{n,(1)})\big)\frac{\delta g_t}{\delta m_{t-1}}(\mathfrak{m}_{t-1}^{\tilde\lambda}; \theta)\Bigg)^2\Bigg]^{\frac{1}{2}} \\
        & \leq CP(\widetilde{Z}^{(1)})^2 (1+E_{\mathfrak{m}_{t-1}(\nu_n^{\tilde\lambda_2})}^{(4)}+E_{\mathfrak{m}_{t-1}^{\tilde\lambda}}^{(4)})(1+E_{\mathfrak{m}_{t-1}(\nu_{n,(1)})}^{(2)})\prod_{l=t}^{T-1}(1+E_{\mathfrak{m}_l(\nu_{n,(1)})}^{(4)}) \\
        & \leq CP(\widetilde{Z}^{(1)})^2 \Big(\frac{1}{n} \sum_{i=1}^{n} P(Z^{(i)})^2 + P(\widetilde{Z}^{(1)})^2 + P(Z^{(1)})^2\Big)^{T-t+2}.
    \end{align*}
    Applying Lemmas \ref{cov}, \ref{S_t expectation inequality}, we find that
    \begin{align}
        & \int_{\Theta}\sum_{t=1}^{T-1} \Big\{f_t^\lambda\big(\mathfrak{m}_{t-1:T-1}(\nu_{n,(1)})\big)\frac{\delta g_t}{\delta m_{t-1}}(\mathfrak{m}_{t-1}^{\tilde\lambda};\theta)\frac{\delta \mathfrak{m}_{t-1}}{\delta \nu}(\nu_n^{\tilde\lambda_2}; Z)\Big\}(\mathrm{d}\theta) \nonumber \\
        & \leq C P(\widetilde{Z}^{(1)})^2 \Big(P(Z^{(1)})^2 + P(\widetilde{Z}^{(1)})^2\Big) \nonumber \\
        & \quad \times \Big(\frac{1}{n} \sum_{i=1}^{n} P(Z^{(i)})^2 + P(\widetilde{Z}^{(1)})^2 + P(Z^{(1)})^2\Big)^{T+1}  \sum_{t=1}^{T-1}(1+E_{\mathfrak{m}_{t-1}(\nu_n^{\tilde\lambda_2})}^{(2)})\prod_{s=t-1}^{T-1}(1+E_{\mathfrak{m}_s(\nu_n^{\tilde\lambda_2})}^{(4)}) \nonumber \\
        & \leq C P(\widetilde{Z}^{(1)})^2 \Big(P(Z^{(1)})^2 + P(\widetilde{Z}^{(1)})^2\Big)\Big(\frac{1}{n} \sum_{i=1}^{n} P(Z^{(i)})^2 + P(\widetilde{Z}^{(1)})^2 + P(Z^{(1)})^2\Big)^{2T+2}. \label{fourth final}
    \end{align}
    Substituting \eqref{first final}, \eqref{second final}, \eqref{final third} and \eqref{fourth final} into the original form of the generalisation error from Theorem \ref{generalisation error}, applying \cite[Lemma D.7]{sam_ge} to simplify the final expectation, and finally using H\"{o}lder's inequality, we recover the claim. In particular, we note that the highest order moment we require of any element of the Gibbs vector $\mathfrak{m}(\nu_n)$ is $8$, hence $p \geq 8$ and $q \geq 4T + 14$ guarantee finiteness of the generalisation error upper bound, and hence the asymptotic convergence rate of $n^{-1}$.
\end{proof}
\begin{remark}
    We make some important comments on Theorem \ref{1/n theoretical}:
    \begin{itemize}
        \item 
        Note that the bound is not sharp -- in this paper we merely aim to illustrate the effects of regularisation, and the very existence of a $n^{-1}$ upper bound on the generalisation error, which itself is sharp (as this is the rate for supervised learning in one dimension).
        \item 
        The high order finite polynomial moments required of the stochastic environment are
        satisfied if the stochastic environment has finite exponential moments. This is not an unreasonable modelling assumption. 
        \item 
        The high value of $p \geq 8$ indicates that $\Gamma$ must regularise very sharply as $\norm{\theta}$ gets very large. This is due to the very weak assumptions we have made on the covariance structure of $Z$, together with the potential interactions of the controls at different times, and can be seen as a worst-case requirement.  This assumption is satisfied by 
        \[ \Gamma(\theta) := \norm{\theta}^2 + \epsilon \exp(\norm{\theta}),\]
        where $\epsilon > 0$ is very small. This example simultaneously addresses the required growth conditions on $\Gamma$ and ensures that the gradients of $\Gamma$ are unlikely to explode, which is helpful for computational purposes. In addition, this ensures regularisation remains close to quadratic regularisation, for which computationally sampling from the Gibbs vector $\mathfrak{m}(\nu_n)$ is well-understood (\cite{full_non_asymptotic}). 
    \end{itemize} 
    \par
    
\end{remark}
\subsection{Balancing Bias and Stability}\label{balancing bias + stability}
The above computations focus on the generalisation error, which explicitly ignores the bias due to regularisation of our learning problem -- it would be useful to understand how to balance the two. 
\par 
Suppose that, at each step $t$ of the minimisation procedure \eqref{entropy_minimisation_problem}, there exists some $m_t^*$ minimising the empirical risk of the (unregularised) approximate dynamic programming problem 
\[ m \mapsto \frac{1}{n}\sum_{i=1}^{n} \widehat{Q}_t(X_t^{\mathrm{ref}}(Z^{(i)}), m, m_{t+1}^*, \ldots, m_{T-1}^*, Z^{(i)}).\]
\par 
Then the measure vector $\mathbf{m}^* := (m_t^*)_{t=0}^{T-1}$ is the solution to the full empirical risk minimisation problem \eqref{ERM problem}, achieving a minimum of 
\[ \mathbb{E}_{Z\sim\nu_n}\big[\ell(\mathbf{m}^*, Z)\big] = \frac{1}{n}\sum_{i=1}^{n} \widehat{Q}_0(x_0 , m_{0:T-1}^*, Z^{(i)}).\]
Recall then that, by the dynamic programming principle, since it solves \eqref{entropy_minimisation_problem}, the Gibbs vector $\mathfrak{m}(\nu_n)$ minimises 
\[ \mathbf{m} = (m_t)_{t=0}^{T-1} \subset \mathcal{P}_2(\Theta) \mapsto \frac{1}{n}\sum_{i=1}^{n} \widehat{Q}_0(x_0, m_{0:T-1}, Z^{(i)}) + \frac{\sigma^2}{2\beta^2}\sum_{t=0}^{T-1}\mathrm{KL}(m_t||\gamma^\sigma). \]
By suboptimality of $\mathbf{m}^*$ to this problem, and nonnegativity of KL-divergence, we see that
\begin{align*}
    \mathbb{E}_{Z\sim\nu_n}[\ell(\mathfrak{m}(\nu_n), Z)] \leq \mathbb{E}_{Z\sim\nu_n}[\ell(\mathbf{m}^*, Z)] + \frac{\sigma^2}{2\beta^2}\sum_{t=0}^{T-1}\mathrm{KL}(m_t^*||\gamma^\sigma).
\end{align*}
Therefore, assuming that $\mathrm{KL}(m_t||\gamma^\sigma) < \infty$ for all $t$, we can bound the expected population risk (a measure of the bias) of the Gibbs vector by writing
\begin{align*}
    \mathbb{E}_{\mathbf{Z}_n}\mathbb{E}_{Z\sim\nu}[\ell(\mathfrak{m}(\nu_n), Z)] & = \mathrm{gen}(\mathfrak{m}(\nu_n), \nu_{\mathrm{pop}}) + \mathbb{E}_{\mathbf{Z}_n}\mathbb{E}_{Z\sim\nu_n}[\ell(\mathfrak{m}(\nu_n), Z)] \\
    & \leq \mathbb{E}_{\mathbf{Z}_n} \mathbb{E}_{Z\sim\nu_n}[\ell(\mathbf{m}^*, Z)]  + \frac{c}{n}\frac{2\beta^2}{\sigma^2}\mathbb{E}_{Z^{(1)}\sim\nu}\big[(1+\norm{Z^{(1)}})^A\big] + \frac{\sigma^2}{2\beta^2}\sum_{t=0}^{T-1}\mathrm{KL}(m_t^*||\gamma^\sigma),
\end{align*}
where we used the result from Theorem \ref{1/n theoretical}.
This demonstrates that scaling $\beta \propto n^{\frac{1}{4}}$ leads to both the generalisation error and expected population risk of the Gibbs vector being of order $n^{-\frac{1}{2}}$ -- a clear balance of the tradeoff between bias and stability. 
\begin{remark}\label{Monte Carlo comparisons}
    We note that, using our notation and bounding the $L_2$ generalisation error \eqref{L2 error} from Remark \ref{L2 generalisation remark}, it is possible to derive such a $n^{-1}$ upper bound and subsequently demonstrate scaling results by computing bounds on the expectation of the squared population risk. That is, our results would match with \cite{sam_ge}, which demonstrates that scaling $\beta \propto n^{\frac{1}{6}}$ gives an upper bound on the expectation of the squared population risk of order $n^{-\frac{2}{3}}$.
    \par
    We have omitted these computations for sake of space, but it is worth noting this possibility, as it guarantees that we may use Markov's inequality to then easily produce probabilistic bounds on the generalisation error. 
\end{remark}
\section{Computational Aspects}\label{computation}
Whilst the generalisation and in-sample properties of the Gibbs vector $\mathfrak{m}(\nu_n)$ are demonstrably desirable, it is not yet clear how one might compute such an object. In this section, we discuss one such approximation procedure, culminating in Algorithm \ref{gibbs vector algo}.
\par 
Recall the $t$-th minimisation problem from \eqref{entropy_minimisation_problem}, 
\begin{align*} \mathrm{minimise} \ m \in \mathcal{P}_p(\Theta) \mapsto \mathbb{E}_{Z\sim\nu_n}\big[\widehat{Q}_t(X_t^{\mathrm{ref}}(Z), m, Z; \mathfrak{m}_{t+1}(\nu_n), \ldots, \mathfrak{m}_{T-1}(\nu_n))\big] + \frac{\sigma^2}{2\beta^2}\mathrm{KL}(m||\gamma^\sigma).
\end{align*}
From Theorem \ref{minimisers}, a unique minimiser $\mathfrak{m}_t(\nu_n)$ exists, and is characterised by the fixed-point equation of its density given by
\begin{align*} \mathfrak{m}_t(\nu_n)(\theta) & = \frac{1}{F_{\beta,\sigma, t}}\exp\left\{-\frac{2\beta^2}{\sigma^2}\left(\mathbb{E}_{Z\sim\nu_n}\Big[\frac{\delta\widehat{Q}_t}{\delta m_t}(X_t^{\mathrm{ref}}(Z), \mathfrak{m}_t(\nu_n), Z;\theta)\Big]+\frac{1}{2\beta^2} \Gamma(\theta)\right)\right\} \\
& = \frac{1}{F_{\beta,\sigma, t}}\exp\left\{-\frac{2\beta^2}{\sigma^2}\left( \frac{\delta R_t}{\delta m_t}(\nu_n, \mathfrak{m}_{t}(\nu_n), \ldots, \mathfrak{m}_{T-1}(\nu_n);\theta)+\frac{1}{2\beta^2} \Gamma(\theta)\right)\right\},
\end{align*}
where we have used the notation $R_t$ as defined in \eqref{risk definition}. The measure $\mathfrak{m}_t(\nu_n)$ is also a stationary solution of the nonlinear Fokker--Planck equation
\[ \partial_\tau m_\tau = \nabla_\theta \Bigg(\Big(D_m R_t(\nu_n, m_\tau, \mathfrak{m}_{t+1}(\nu_n), \ldots, \mathfrak{m}_{T-1}(\nu_n); \theta)+\frac{1}{2\beta^2}\nabla_\theta U(\theta)\Big)m_\tau  + \frac{\sigma^2}{2\beta^2}\nabla_\theta m_\tau\Bigg),\]
with time indexed by $\tau$. In \cite{langevin} it is demonstrated that $m_\tau$ converges in Wasserstein-2 metric to the Gibbs vector element $\mathfrak{m}_t(\nu_n)$. We begin to see potential algorithmic connections once we note that the law of $m_\tau$ is the law of the process $\theta = (\theta_\tau)_\tau$, which is governed by a McKean--Vlasov SDE of the form
\[ \mathrm{d}\theta_\tau = -\Big(D_m R_t(\nu_n, m_\tau, \mathfrak{m}_{t+1}(\nu_n), \ldots, \mathfrak{m}_{T-1}(\nu_n); \theta_\tau)+\frac{1}{2\beta^2}\nabla U(\theta_\tau)\Big)\mathrm{d}\tau + \frac{\sigma}{\beta}\mathrm{d}W_\tau,\]
where $W = (W_\tau)_\tau$ denotes a Brownian motion. This is known as the \textit{mean-field Langevin dynamics} (MFLD). The key issue with such an equation is the explicit dependence on its own law, the very object we wish to approximate. From propagation of chaos, we may approximate $m_\tau$ by $m_\tau^r$ for some integer $r$, which denotes the empirical law of an interacting particle system, given by 
\[ \begin{cases}
    \mathrm{d}\theta_\tau^j = -\Big(D_m R_t(\nu_n, m^r_\tau, \mathfrak{m}_{t+1}(\nu_n), \ldots, \mathfrak{m}_{T-1}(\nu_n); \theta_\tau^j)+\frac{1}{2\beta^2}\nabla U(\theta_\tau^j)\Big)\mathrm{d}\tau + \frac{\sigma}{\beta}\mathrm{d}W_\tau, & j=1,\ldots,r,\\
    m_\tau^r := \frac{1}{r}\sum_{j=1}^{r} \delta_{\theta_\tau^j}.
\end{cases}\]
There has been extensive research demonstrating strong and weak convergence of $m_\tau^r$ to $m_\tau$ as $r\to\infty$ (see \cite{prop_chaos_SGD, topics_chaos}) --- particularly useful are the results where such convergence is uniform in $\tau$ (\cite{uniform-in-time-chaos}). 
\par
Note that, for any $j, \tau$, we may write
\[ \nabla_{\theta^j} R_t(\nu_n, m^r_\tau, \mathfrak{m}_{t+1}(\nu_n), \ldots, \mathfrak{m}_{T-1}(\nu_n)) = \frac{1}{r}D_m R_t(\nu_n, m^r_\tau, \mathfrak{m}_{t+1}(\nu_n), \ldots, \mathfrak{m}_{T-1}(\nu_n); \theta_\tau^j),\]
so the above system becomes 
\[ \begin{cases}
    \mathrm{d}\theta_\tau^j = -\nabla_{\theta^j}\Big(rR_t(\nu_n, m^r_\tau, \mathfrak{m}_{t+1}(\nu_n), \ldots, \mathfrak{m}_{T-1}(\nu_n))+\frac{1}{2\beta^2} U(\theta_\tau^j)\Big)\mathrm{d}\tau + \frac{\sigma}{\beta}\mathrm{d}W_\tau, & j=1,\ldots,r,\\
    m_\tau^r := \frac{1}{r}\sum_{j=1}^{r} \delta_{\theta_\tau^j}.
\end{cases}\]
Upon imposing the final layer of approximation which arises from time-discretisation, we see that this is simply a continuous version of the noisy stochastic gradient descent algorithm, with updates given by
\[ \theta^j_{\tau_{k+1}} = \theta^j_{\tau_k} - \eta\nabla_{\theta^j}\Big(rR_t(\nu_n, m^r_{\tau_k}, \mathfrak{m}_{t+1}(\nu_n), \ldots, \mathfrak{m}_{T-1}(\nu_n))+\frac{1}{2\beta^2} U(\theta_{\tau_k}^j)\Big) + \frac{\sigma}{\beta}\sqrt{\eta}\xi_k^j, \quad j = 1,\ldots, r,\]
where $\eta >0$ denotes the learning rate, and each $\xi_k^j \sim \mathcal{N}(\mathbf{0}, \mathbb{I})$ independently. 
\begin{remark}
    A full non-asymptotic understanding of how these layers of approximation (including particle approximation, and timestepping and convergence of Langevin dynamics) affect the final generalisation error, are beyond the scope of this paper, which focuses on the asymptotic guarantees discussed above. We will demonstrate empirically in Section \ref{numerics} that we can approximate the behaviour shown by the Gibbs vector sufficiently.
    \par
    To our knowledge, the only work thus far to consider the non-asymptotic error in approximating $\mathfrak{m}_t(\nu_n)$ comes from \cite{langevin_monte_carlo, full_non_asymptotic}. Our work contains added complexities through the fact that we learn $\mathfrak{m}(\nu_n)$ by backwards induction, incurring errors at each minimisation problem. Assuming continuity of the approximate Q-functions $\widehat{Q}_t$ with respect to their measure arguments guarantees that these errors propagate smoothly. 
\end{remark}

\begin{algorithm}
    \caption{Gibbs Vector Algorithm}\label{gibbs vector algo}
    \begin{algorithmic}
        \Require training data $\{Z^{(i)}\}_{i=1}^{n}$, learning rate $\eta$, terminal control time $T$, terminal algorithm time $T_\tau$, network width $r$, reference controls $\{r_t\}_{t=0}^{T-1}$, regularisation parameters $\sigma, \beta >0$.
        \For{$t = T-1,\ldots, 0$}
            \For{$k = 0,\ldots, T_\tau -1$}
                \For{$j = 1,\ldots, r$}
                \State{generate $\xi_k^j \sim \mathcal{N}(\mathbf{0},\mathbb{I})$ }
                \State $\theta^j_{k+1} \gets \theta^j_k - \eta\nabla_{\theta^j}\Big(rR_t(\nu_n, m_k^r, \mathfrak{m}_{t+1}(\nu_n), \ldots, \mathfrak{m}_{T-1}(\nu_n))+\frac{1}{2\beta^2}U(\theta^j_k)\Big)+\frac{\sigma}{\beta}\sqrt{\eta}\xi_k^j$
                \EndFor
                \State $m_{k+1}^r \gets \frac{1}{r}\sum_{j=1}^{r}\delta_{\theta_{k+1}^j}$
            \EndFor
            \State $\mathfrak{m}_t(\nu_n) \gets m_{T_\tau}^r $
        \EndFor
    \end{algorithmic}
\end{algorithm}

We explicitly state this procedure in Algorithm \ref{gibbs vector algo}. Using standard deep learning packages such as PyTorch, making computation of such a gradient simple, we see that the algorithm provides a genuinely feasible computational approach to solving \eqref{entropy_minimisation_problem}.

\section{Numerical Experiments}\label{numerics}
In the following section we present two classic control problems. The code for our numerical experiments is available at \url{https://github.com/BorisBaros13/Overlearning}.
\subsection{Portfolio Allocation: The Merton Problem}
We begin by considering a simple portfolio allocation problem in a discrete-time financial market with $d$ assets which we assume to follow Markovian dynamics and whose values are denoted by the $d$-dimensional stock price process $S_t\in\mathbb{R}^d_+, t = 0, \ldots, T.$ It is not unreasonable to assume that stock prices are unaffected by portfolio allocation
of a small investor\footnote{For a large investor with market impact, we would have to address the counterfactual estimation
problem including the effect of the strategy on the environment, as done e.g.\ in \cite{giegrich2024limit} in the context of trade execution in limit order books.}. 
Therefore, we choose to model the returns, given by $Z_{t+1} := (S_{t+1} - S_t)/S_t$, as the stochastic environment, and importantly we assume these to be Markovian -- a natural extension would be to augment $Z$ with estimates of short-run trend and volatility, providing a richer state variable. One is then free to specify a control process $\pi_t = (\pi_t^{(k)})_{k=1}^{d}\in\mathbb{R}^d$, which denotes the holdings of each asset at time $t$ (in terms of dollars invested in each asset), the remainder being invested in a risk-free bond with constant one-period interest rate $r$. 
\par
Starting with some initial wealth $y > 0$, our aim is to optimally control the self-financing wealth dynamics of the portfolio value $(Y_t^\pi)_{t=0}^{T}$, evolving according to
\[ Y_{t+1}^\pi = (1+r)Y_t^\pi + \pi_t\cdot\Big( Z_{t+1} - r\mathbf{1}\Big),\quad Y_0^\pi = y.\]
\par
In our setting, we will evaluate the performance of the control process $\pi$ using the exponential utility functional
\[ J(\pi) := 1 - \exp(-\lambda Y_T^\pi),\]
where $\lambda > 0$ denotes a risk-aversion parameter.
\par
Noting that the augmented vector process $X := (Y^{\pi}, Z)$ is a Markov process, it is enough to consider controls of feedback form, taking $X$ to represent the state for the problem. We will take $T =2$ and pre-specify a uniform initial investment of $\pi_0 = (1/d,\cdots, 1/d)$ of initial wealth $y = 1$, so that the stochastic control problem simplifies to
\[ \mathrm{maximise}\ \pi_1\in\mathcal{C} \mapsto \mathbb{E}_{Z_1,Z_2}\Big[1-\exp(-\lambda Y_2^\pi(Z))\Big].\]
In our simulations we consider an interest-free financial market ($r=0$) with stochastic environment dynamics of the form
\[ Z_1 \sim \mathcal{U}[-1,1]^d, \quad Z_2 = \zeta\eta,\]
where $\eta\in\mathbb{R}^d$ is some fixed unit vector, and $\zeta\sim\mathcal{N}(m,s)$ independently of $Z_1$ for some hyperparameters $m,s$. Since $X_1$ is really a function of $Z_1$, by the dynamic programming principle we need to maximise the Q-function
\[Q_1(z,\pi_1) = \mathbb{E}_{Z_1,Z_2}\Big[1-\exp\big(-\lambda X_2^\pi(Z)\big)\Big| Z_1 = z\Big] = 1-\exp(-\lambda x)\mathbb{E}_{\zeta}\Big[\exp\Big\{-\frac{\lambda}{d}\mathbf{1}\cdot z -\lambda \zeta \pi_1(z)\cdot \eta\Big\}\Big].  \]
Simplifying, we find that this is maximised for constant control
\[ \pi^*(z) := \pi^* = \frac{m}{\lambda s^2}\eta,\]
with resulting expected reward given by 
\[ v^* = 1-\exp\Big\{-\frac{m^2}{2s^2}\Big\}.\]
For our implementation we have chosen hyperparameter values $\lambda = 1, m = 0.18, s = 0.44, d = 10$, so that the optimal value is $-0.9297$. In order to avoid numerical integration with our learned controls, we estimate the expected generalisation error of control $\pi_1$ by the point estimator
\[ \widehat{\mathrm{gen}}(\pi_1) := \mathbb{E}_{\nu_{\mathrm{test}}}[J(\pi_1)] - \mathbb{E}_{\nu_n}[J(\pi_1)],\]
where $\nu_{\mathrm{test}}$ denotes the empirical measure of samples of the stochastic environment drawn from $\nu_{\mathrm{pop}}$, and, as before, $\nu_n$ denotes the training distribution used to construct $\pi_1$. 
\par
In Figure \ref{unreg_merton} we present the point estimates for the unregularised Merton problem. As highlighted in the overlearning result of Section \ref{overlrn}, we indeed see a lack of stability in the algorithm's performance, with explosive behaviour at all levels of network width and sample size. Further, we visually see the transition between underparametrisation and overparametrisation. As discussed in \cite{overlearn}, for sufficiently underparametrised models, we may expect better generalisation, 
however, the generalisation point estimates at a sample size of $1000$ are still unreasonably high. For a positive terminal wealth we would have $J\in (0, 1),$ so that generalisation errors of the order we see in Figure \ref{unreg_merton} are evidently poor. 

\begin{figure}[htbp]
    \centering
    \includegraphics[width=0.9\textwidth]{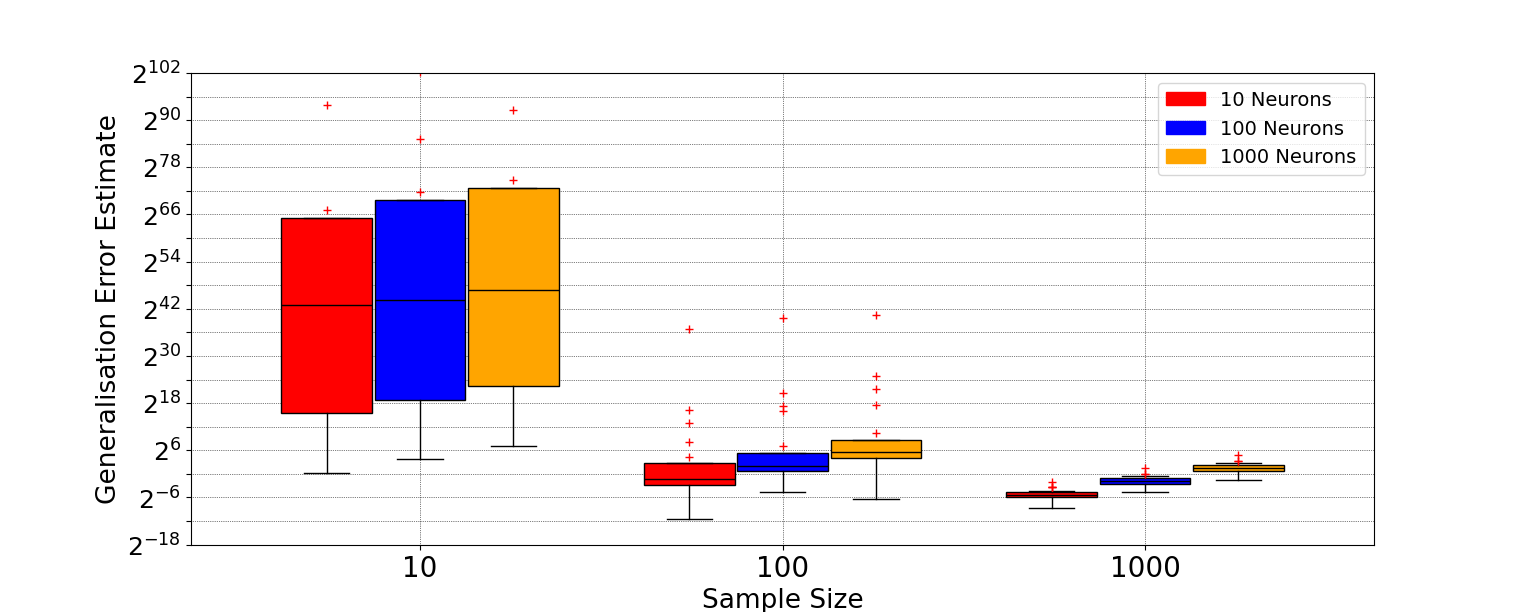}
    \vspace{-0.5cm}
    \caption{Generalisation error point estimates for unregularised Merton problem with 30 trials per setting. We omit multiple outliers for sample size of $10$, of size up to approximately $2^{200}$.}
    \label{unreg_merton}
\end{figure}
In Figure \ref{reg_merton} we present our generalisation error estimates using the Gibbs vector algorithm, where we have used a regularisation strength of $\frac{1}{2}$, coming from $\beta = \sigma = 100$. Training was done over $100,000$ epochs with a cosine annealing learning rate (as recommended in \cite{annealing}), starting from $0.1$ and finishing at $0.00001$. As demonstrated by the reference line, which represents a scale of $n^{-1}$, entropy regularisation induces a high degree of stability, even when trained with only 8 samples. It is also worth noting that similar generalisation errors are realised by neural networks over all network widths. This highlights the nature of the algorithm as a problem of statistically sampling from the Gibbs vector $\mathfrak{m}(\nu_n)$. Even with 10 hidden neurons, the mean-field neural network adequately samples from the Gibbs vector measures.
\begin{figure}[htbp]
    \centering
    \includegraphics[width=0.9\textwidth]{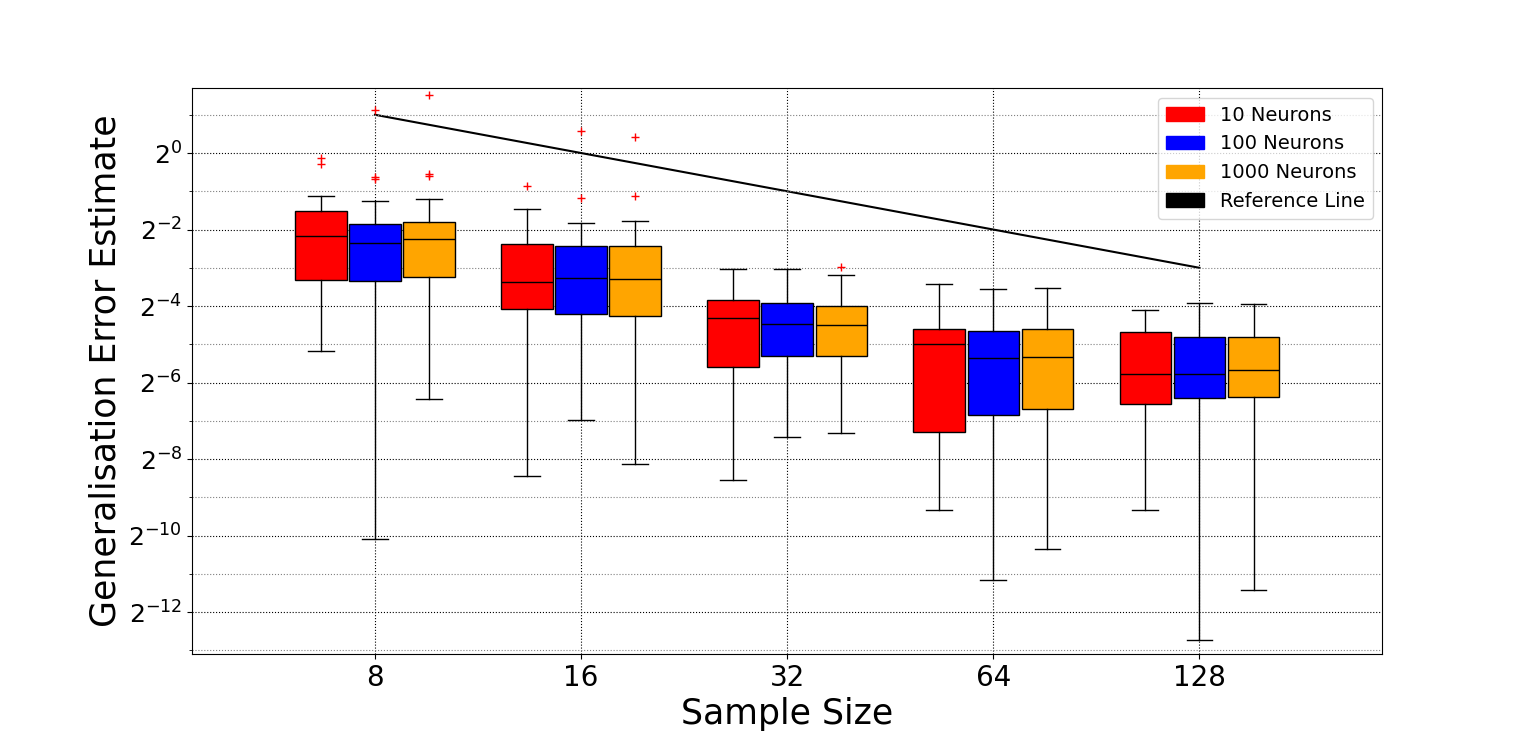}
    \vspace{-0cm}
    \caption{Generalisation error point estimates for regularised Merton problem with 50 trials per setting. The reference line is of scale $n^{-1}$ for comparison with results of Theorem \ref{1/n theoretical}.}
    \label{reg_merton}
\end{figure}
\subsection{Path Navigation: The Zermelo Problem}
We will now study a multi-period problem, to demonstrate that our bounds do not degenerate catastrophically with larger $T$. Specifically, we consider the problem of navigating a boat or plane from a starting position $X_0 = (-20, x_0),$ where $x_0 \sim \mathcal{U}[-1, 1]$, to a terminal point $(20, 0)$, aiming to avoid a circular obstacle, as displayed in Figure \ref{final zermelo}, in 50 time steps. This is a variation of the classical Zermelo navigation problem from \cite{Zermelo_navigation}. The stochastic environment $Z$ manifests in the form of a vertical wind, in this case modelled by an Ornstein--Uhlenbeck process, 
\[ dZ_t = \theta (\alpha - Z_t) \, \mathrm{d}t + \vartheta \, \mathrm{d}W_t, \quad Z_0 \sim \mathcal{U}[-1/2, 1/2],\]
where $W$ denotes a Brownian motion, and we choose hyperparameters $\theta = 1, \alpha = 0, \vartheta = 1.$ Discretising this process with time-step $\tau = 0.04$ generates a discrete-time process $Z = (Z_t)_{t=1}^{50}$.
\par
In this setting, the control process $\pi$ denotes the chosen angle with the positive x-axis, increasing anti-clockwise. Explicitly, the stochastic environment $Z$ and control process $\pi$ contribute to the realised trajectory $\mathbf{X}^{\pi}(Z)$ via the transition function
\begin{align*}
    \mathbf{X}_{t+1}^{\pi}(Z) := \begin{pmatrix}
        X_{t+1}^{\pi}(Z)\\
        Y_{t+1}^{\pi}(Z)
    \end{pmatrix}
    = 
    \mathbf{X}_{t}^{\pi}(Z)
    +
    v_s\begin{pmatrix}
        \sin(\pi_t)\\
        \cos(\pi_t)
    \end{pmatrix}
    + \begin{pmatrix}
        0 \\
        Z_{t+1}
    \end{pmatrix},
\end{align*}
where we choose the speed of the boat to be $v_s = 0.8$. 
The aim of the problem is then to choose some angle sequence $\pi$ to minimise the expectation of the loss function
\[ \ell(\mathbf{X}^\pi(Z), \pi) = \big\Vert \mathbf{X}_{50}^\pi(Z) - 20 \big\Vert^2 + M\sum_{t=0}^{50}\Bigg(1 - \frac{1}{1+\exp\big\{A\big(1-\norm{\mathbf{X}_t^\pi(Z)}^2\big)\big\}}\Bigg), \]
where we choose $M = 10, A = 2$ to indicate a soft (importantly, differentiable) version of forbidding passage through the unit circle. 

\par
Choosing our state to be the augmented vector process $(X^\pi, Y^\pi, Z)$, for our training we take 100 samples of $Z$, a regularisation strength of $1/200$ coming from $\beta = 100$ and $\sigma = \sqrt{0.1}\beta$, 100 hidden neurons, train over 20,000 epochs with a cosine annealing learning starting from 1 and decreasing to 0.00001, and use a reference control of constant heading in the positive x direction. We see in Figure \ref{final zermelo} that the in-sample performance does extremely well, demonstrating that the bias induced by entropy regularisation does not significantly deteriorate performance. It is particularly interesting to see that the control circumvents the circular obstacle by following the wind, rather than ever going against it. In Appendix \ref{zermelo} we display more images of the backwards inductive minimisation, which effectively show the algorithm's progress over the backwards time steps from purely reference-controlled states to learned actions -- in particular, we note the eventual ability to circumvent the obstacle. 
\par
In Figures \ref{red_green}, \ref{hists} we visualise the performance in- and out-of-sample for 1,000 unseen samples of $Z$. Not only does the algorithm perform well in-sample, but we see that areas around the obstacle unvisited in-sample are still well-traversed and eventually directed close to the target. Therefore, we can deduce an effective balance of bias and stability of the Gibbs vector algorithm, and importantly empirically show that the generalisation bounds are likely much tighter than those we demonstrate in Theorem \ref{generalisation error}.
\par
This is in contrast to Figure \ref{red_green_unreg}, where we visualise the in-sample and out-of-sample performance for  unregularised learning, for which we use the same training parameters as above. Although the out-of-sample behaviour is not as markedly catastrophic compared to that for the Merton problem in Figure \ref{unreg_merton}, we still see collision with the obstacle for an out-of-sample wind trajectory. Note that for our training we incorporated early stopping, which acts as an implicit regulariser -- this stopped the unregularised model from overlearning too harshly. In reality, we would train for longer, so we should really expect out-of-sample performance to look much worse for unregularised learning.
\begin{figure}[H]
    \centering
    \includegraphics[width = 0.9\textwidth]{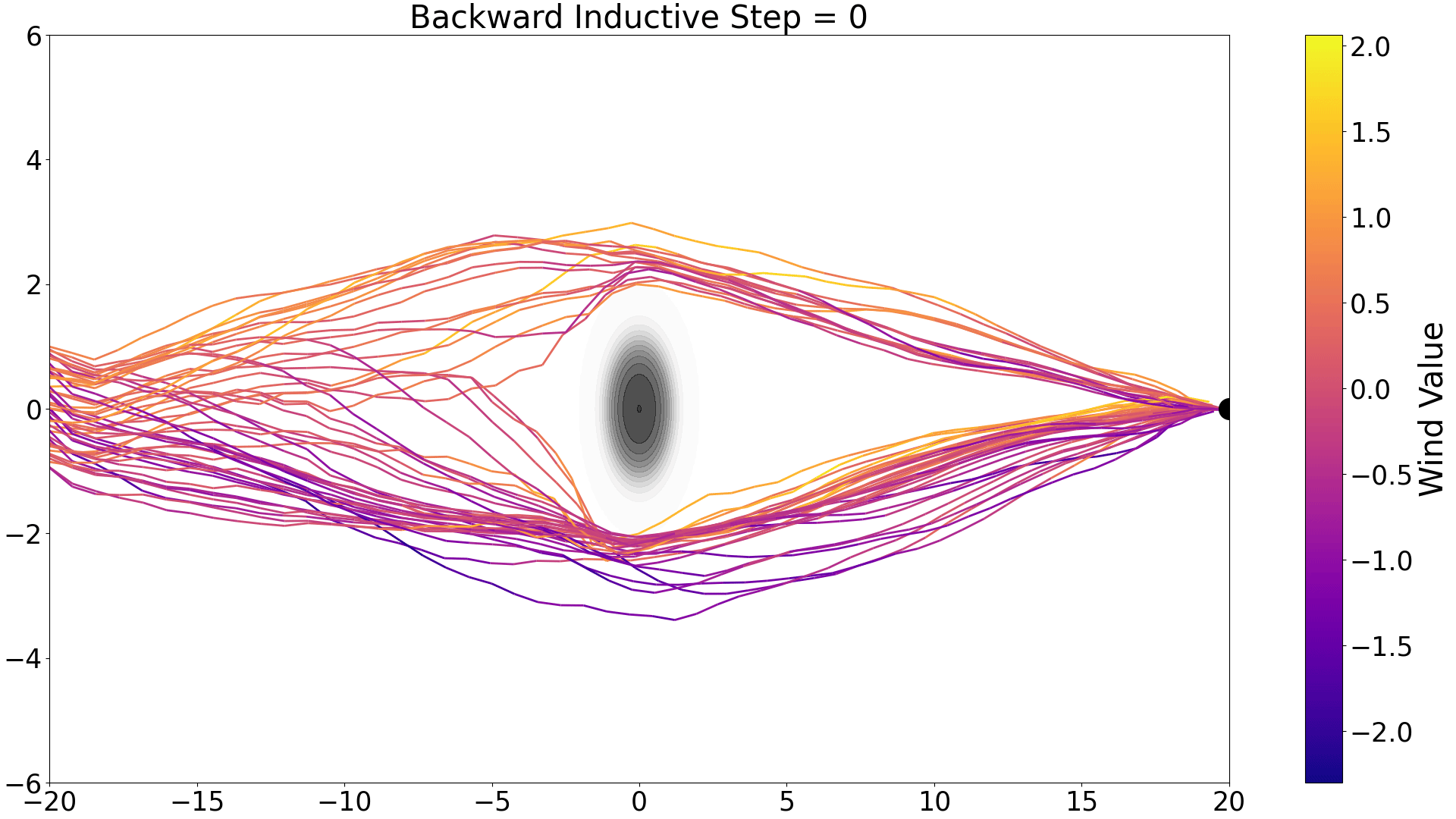}
    \vspace{-0.5cm}
    \caption{Final in-sample performance over first 50 training samples, coloured by wind.}
    \label{final zermelo}
\end{figure}
\begin{figure}[H]
    \centering
    \includegraphics[width=0.92\textwidth, height = 8cm]{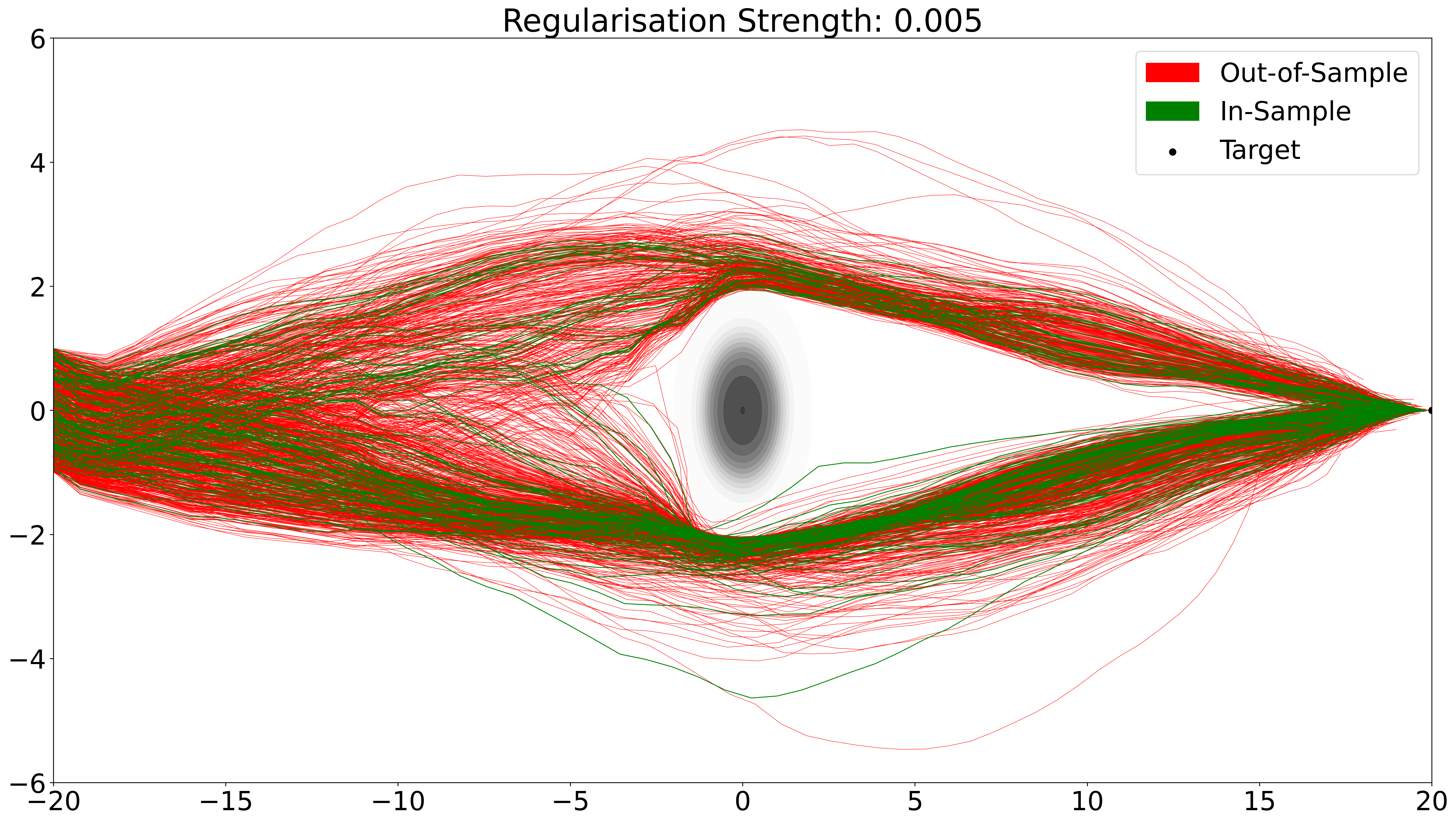}
    \vspace{-0.5cm}
    \caption{In-sample and out-of-sample performance for regularised learning over 100 training samples and 1000 testing samples.}
    \label{red_green}
\end{figure}
\begin{figure}[H]
    \centering
    \includegraphics[width=0.9\textwidth]{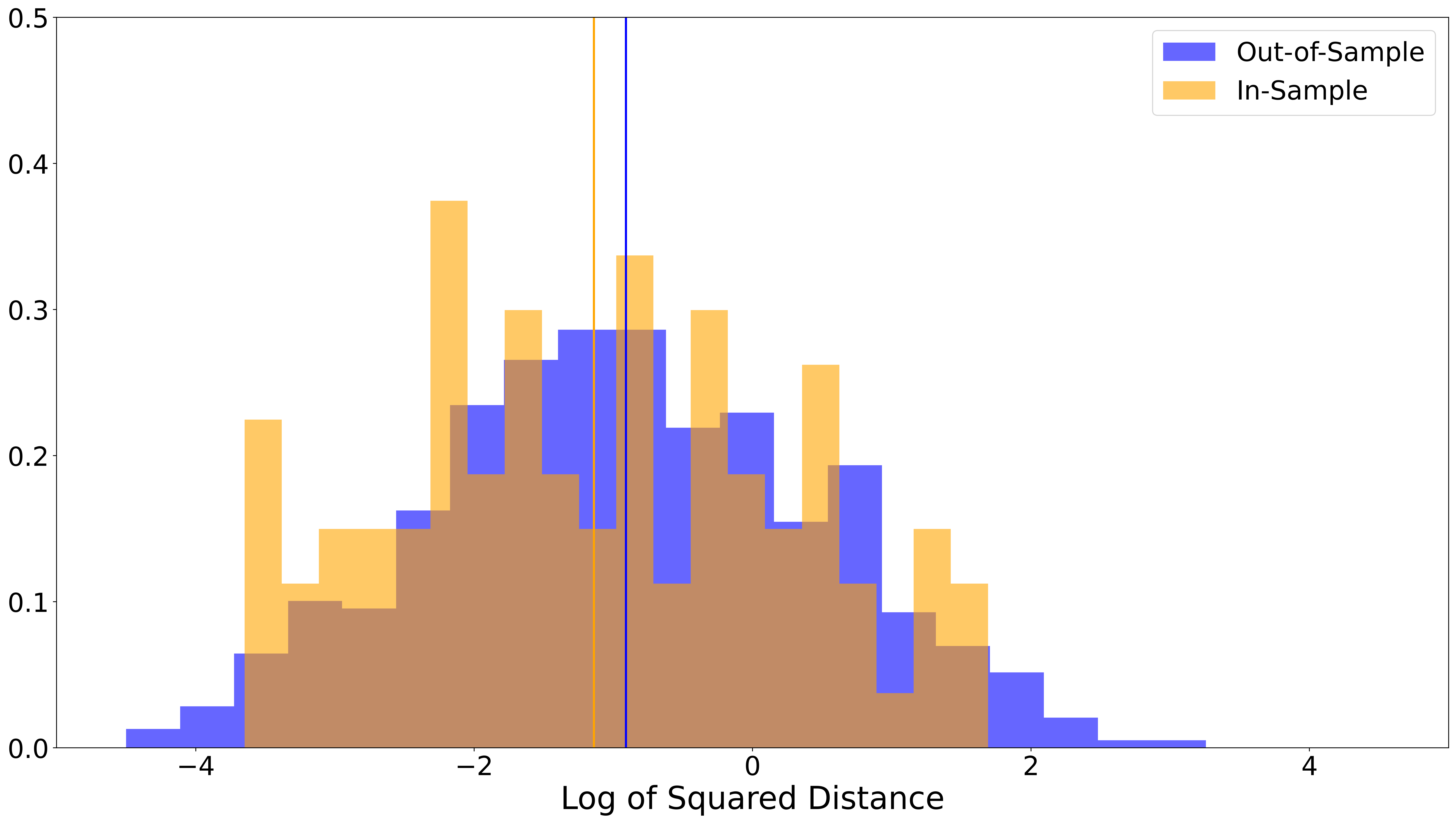}
    \vspace{-0.5cm}
    \caption{In-sample and out-of-sample performance (the logarithm of the terminal loss, which is the squared distance from the target) over 100 training samples and 1000 testing samples. Means denoted by vertical lines.}
    \label{hists}
\end{figure}
\begin{figure}[H]
    \centering
    \includegraphics[width=0.92\textwidth, height = 8cm]{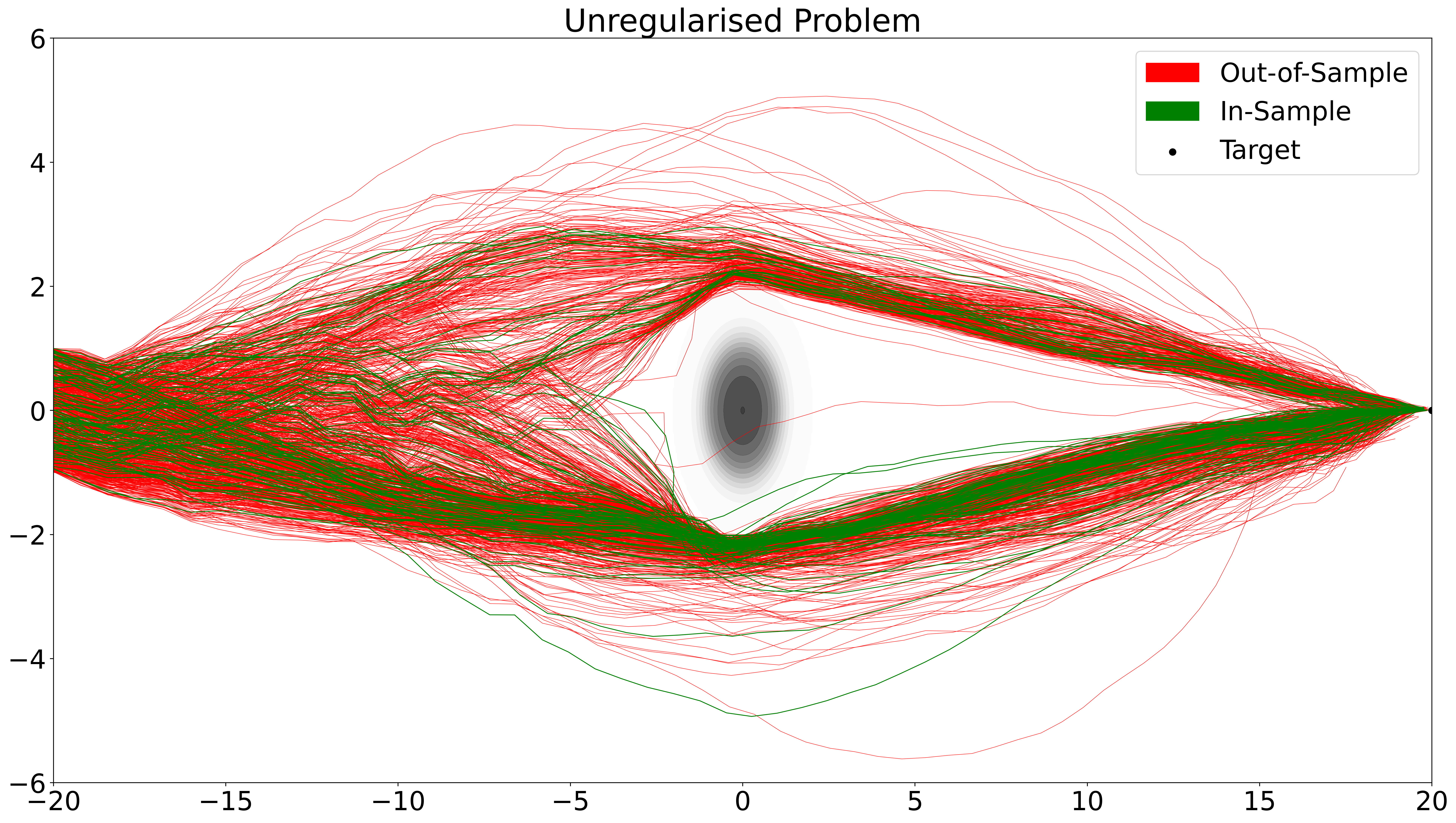}
    \vspace{-0.5cm}
    \caption{In-sample and out-of-sample performance for unregularised learning over 100 training samples and 1000 testing samples.}
    \label{red_green_unreg}
\end{figure}
\bibliography{bibs}
\begin{appendix}
    \section{Some Prerequisites from Calculus on the Space of Probability Measures}\label{prereq}
We briefly present some standard definitions and concepts from calculus on the space of probability measures, which will become crucial in our analysis of the generalisation error. The below definitions may be found in \cite{cardaliaguet_normalisation, total_derivative_fact}.

By $\mathcal{P}(\Theta)$ we denote the space of probability measures on $\Theta$, and by $\mathcal{P}_p(\Theta)$ the subspace in which measures have finite $p$-moments for $p\geq 1$. Taking $\Theta = \mathbb{R}^d$, we denote the \textit{Wasserstein-p metric} on $\mathcal{P}_p(\mathbb{R}^d)$ by
\[ \mathcal{W}_p(\mu,\nu) := \inf\left\{\left(\int_{\mathbb{R}^d\times\mathbb{R}^d}\abs{x-y}^p\pi(\textrm{d}x,\textrm{d}y)\right)^{\frac{1}{p}}:\pi \textrm{ is a coupling of $\mu$ and $\nu$}\right\}\]
for $\mu,\nu\in\mathcal{P}_p(\mathbb{R}^d)$. By a coupling $\pi$ of $\mu$ and $\nu$, we mean a probability measure $\pi \in \mathcal{P}_p(\mathbb{R}^d\times\mathbb{R}^d)$ such that the marginals satisfy $\pi(A\times\mathbb{R}^d) = \mu(A)$ and $\pi(\mathbb{R}^d\times A) = \nu(A)$ for some $A\in\mathcal{B}(\mathbb{R}^d)$. 
Without proof we present the standard results (also stated explicitly in \cite{sam_ge})
\begin{enumerate}
    \item 
    $(\mathcal{P}_p(\mathbb{R}^d),\mathcal{W}_p)$ is a Polish space;
    \item 
    $\mathcal{W}_p(\mu_n,\nu)\to 0$ if and only if $\mu_n$ weakly converges to $\mu$ and $\int_{\mathbb{R}^d}\abs{x}^p\mu_n(\textrm{d}x)\to\int_{\mathbb{R}^d}\abs{x}^p\mu(\textrm{d}x)$;
    \item 
    For all $q>p$ the set $\{\mu\in\mathcal{P}_p(\mathbb{R}^d):\int_{\mathbb{R}^d}\abs{x}^q\mu\textrm{d}x\leq C \}$ is $\mathcal{W}_p$-compact.
\end{enumerate}
In order to analyse minimisation problems in the space of probability measures, it is important to develop some notion of derivative, in this case known as the linear functional derivative. 
\begin{definition}\label{frechet definition}
    For a function $F:\mathcal{P}(\mathbb{R}^d)\times\mathbb{R}^k\to\mathbb{R}$, we say the map $m\mapsto F(m,x)$ is in $C^1$ if there exists a map $\frac{\delta F}{\delta m}:\mathcal{P}(\mathbb{R}^d)\times\mathbb{R}^k\times\mathbb{R}^d \to \mathbb{R}$ such that
    \begin{enumerate}
        \item 
        $\frac{\delta F}{\delta m}$ is measurable with respect to $x,a,$ and continuous with respect to $m$;
        \item 
        For every bounded $B\subset\mathcal{P}_2(\mathbb{R}^d)\times\mathbb{R}^k$, there exists a constant $C>0$ such that $\abs{\frac{\delta F}{\delta m}(m,x,a)}\leq C(1+\abs{a}^2)$ for all $(m,x)\in B$;
        \item 
        For all $m,m'\in\mathcal{P}_2(\mathbb{R}^d)$,
        \[F(m',x)-F(m,x) = \int_0^1\int_{\mathbb{R}^d} \frac{\delta F}{\delta m}(m+\lambda(m'-m),x;a)(m'-m)(\textrm{d}a)\textrm{d}\lambda.\]
    \end{enumerate}
Noting that the functional linear derivative is only defined up to some constant, we impose the normalisation condition $\int \frac{\delta F}{\delta m} (m,x;a)m(\textrm{d}a)=0$. Further to the above, we say $F$ is $\mathcal{C}^2$ if both $F$ and $\frac{\delta F}{\delta m}$ are $\mathcal{C}^1$, and so on for higher derivatives. 
\par
In the case that $F:\mathcal{P}(\mathbb{R}^d)\to \mathbb{R},$ we give the map $m\mapsto F(m)$ all of the definitions above, simply excluding the $x$ variable. 
\par
If, further to the above definition, the map $(m,x,a,)\mapsto \frac{\delta F}{\delta m}(m,x,a)$ is continuously differentiable in $x$, then the intrinsic derivative is given by
\[ D_m F(m,x;a) := \nabla\left(\frac{\delta F}{\delta m}(m,x;a)\right),\]
where the gradient $\nabla$ is taken over $x\in\mathbb{R}^k$. 
\end{definition}

    \section{Supplementary Inequalities}\label{appA}
In the following section we explicitly demonstrate upper bounds which will repeatedly become useful throughout our main proofs. Where products are undefined, we take them to be $1$, and where sums are undefined we take them to be $0$. We also perform computations in one dimension, though the multivariate setting follows equivalently. 

\begin{lemma}\label{norm x}
    Under Assumption \ref{verifiable assumptions}, for $\mathbf{m} = (m_{s})_{s=0}^{T-1}$ and some $t$, there exists $C > 0$ such that
    \[ (1+\norm{X_t^{\mathbf{m}}(Z)}) \leq C(1+\norm{x_0})\prod_{s=0}^{t-1}(1+E_{m_s}^{(2)})(1+\norm{Z_{s+1}}).\]
\end{lemma}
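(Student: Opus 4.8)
The plan is to induct on $t$. For the base case $t = 0$, we have $X_0^{\mathbf{m}}(Z) = x_0$ by \eqref{transition function}, and the product $\prod_{s=0}^{-1}(\cdot)$ is empty and hence equals $1$ by our convention on empty products, so the claimed inequality holds trivially with $C = 1$.

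For the inductive step, suppose the bound holds at time $t$; I will propagate it through one application of the transition function. From $X_{t+1}^{\mathbf{m}}(Z) = h_t(X_t^{\mathbf{m}}(Z), u_{m_t}(X_t^{\mathbf{m}}(Z)), Z_{t+1})$, the linear growth condition on $h_t$ in Assumption \ref{verifiable assumptions}(iv) gives
\[ \norm{X_{t+1}^{\mathbf{m}}(Z)} \leq C\big(1 + \norm{X_t^{\mathbf{m}}(Z)} + \norm{u_{m_t}(X_t^{\mathbf{m}}(Z))} + \norm{Z_{t+1}}\big). \]
The only term that needs work is the control term. Since $u_{m_t}(x) = \mathbb{E}_{\theta \sim m_t}[\phi(\theta, x)]$, Jensen's inequality combined with the growth bound $\norm{\phi(\theta, x)} \leq C(1+\norm{x})(1+\norm{\theta}^2)$ from Assumption \ref{verifiable assumptions}(vi) yields
\[ \norm{u_{m_t}(X_t^{\mathbf{m}}(Z))} \leq C(1+\norm{X_t^{\mathbf{m}}(Z)})\,\mathbb{E}_{\theta\sim m_t}\big[1 + \norm{\theta}^2\big] = C(1+\norm{X_t^{\mathbf{m}}(Z)})(1 + E_{m_t}^{(2)}). \]
This is precisely the step that introduces the second moment $E_{m_t}^{(2)}$: it is forced by the quadratic-in-$\theta$ growth allowed for the activation (which is what permits ReLU, cf.\ Remark \ref{rem:verifiable assumptions discussion}).

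Substituting this back, using $C \geq 1$, and absorbing the additive constants by repeatedly applying $1 + a + b \leq (1+a)(1+b)$ for $a,b\geq 0$ together with $1 + E_{m_t}^{(2)} \geq 1$, $1 + \norm{Z_{t+1}} \geq 1$ and $1 + \norm{X_t^{\mathbf{m}}(Z)} \geq 1$, I obtain a one-step estimate of the form
\[ 1 + \norm{X_{t+1}^{\mathbf{m}}(Z)} \leq C'\,(1+\norm{X_t^{\mathbf{m}}(Z)})(1 + E_{m_t}^{(2)})(1+\norm{Z_{t+1}}), \]
and plugging in the inductive hypothesis for $1 + \norm{X_t^{\mathbf{m}}(Z)}$ and renaming the constant closes the induction. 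There is no genuine obstacle here: the argument is a routine iteration over the finitely many steps $0, \ldots, T-1$, and the only points requiring care are that the constant $C$ stays uniform in $Z$ and in $\mathbf{m}$ (which it does, as every constant comes from Assumption \ref{verifiable assumptions}) and that it is allowed to grow by a bounded multiplicative factor at each step. If some $E_{m_s}^{(2)} = \infty$ the right-hand side is $+\infty$ and the inequality holds vacuously, so no integrability hypothesis on $\mathbf{m}$ is needed for the statement.
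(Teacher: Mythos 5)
Your proof is correct and follows essentially the same route as the paper's: both rest on the one-step estimate $1+\norm{X_{t+1}^{\mathbf{m}}(Z)} \leq C(1+\norm{X_t^{\mathbf{m}}(Z)})(1+E_{m_t}^{(2)})(1+\norm{Z_{t+1}})$ obtained from the linear growth of $h_t$ (Assumption \ref{verifiable assumptions}(iv)) and the bound $\norm{u_{m_t}(x)} \leq C(1+\norm{x})(1+E_{m_t}^{(2)})$ from Assumption \ref{verifiable assumptions}(vi). You merely phrase the iteration as a forward induction with a spelled-out Jensen step, whereas the paper unrolls the same recursion backwards; the content is identical.
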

\begin{proof}
    Begin by noting, from Assumption \ref{verifiable assumptions}(iv), that
    \[ 1+\norm{X_t^{\mathbf{m}}(Z)} \leq C\Big(1+\norm{X_{t-1}^{\mathbf{m}}(Z)} + \norm{u_{m_t-1}(X_{t-1}^{\mathbf{m}}(Z))} + \norm{Z_{t}}\Big).\]
    From Assumption \ref{verifiable assumptions}(vi), $ \norm{u_{m_{t-1}}(X_{t-1}^\mathbf{m}(Z))} \leq C(1+\norm{X_{t-1}^{\mathbf{m}}(Z)})(1+E_{m_{t-1}}^{(2)})$,
    so we can form the recursion
    \begin{align*}
        1+\norm{X_t^{\mathbf{m}}(Z)} & \leq C(1+\norm{X_{t-1}^{\mathbf{m}}(Z)})(1+E_{m_{t-1}}^{(2)})(1+\norm{Z_t}) \\
        & \leq \cdots \\
        & \leq C(1+\norm{x_0})\prod_{s=0}^{t-1}(1+E_{m_s}^{(2)})(1+\norm{Z_{s+1}}).
    \end{align*}
\end{proof}
\begin{lemma}\label{u derivs}
    Under Assumption \ref{verifiable assumptions}, there exists $C>0$ such that 
    \begin{align*}
        \partial_x u_m(x) & \leq C(1+E_m^{(2)}), \\
        \partial_x^2 u_m(x) & \leq C(1+E_m^{(2)})(1+E_m^{(4)}).
    \end{align*}
\end{lemma}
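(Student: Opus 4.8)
The plan is to differentiate $u_m(x)=\int_\Theta\phi(\theta,x)\,m(\mathrm{d}\theta)$ under the integral sign, exploiting the explicit form $\phi(\theta,x)=a\,\sigma(w\cdot x+b)$ with $\theta=(a,w,b)$, and then to bound the resulting integrands pointwise in $\theta$ before integrating against $m$. Concretely, first I would establish that $\partial_x u_m(x)=\int_\Theta a\,w\,\sigma'(w\cdot x+b)\,m(\mathrm{d}\theta)$ and $\partial_x^2 u_m(x)=\int_\Theta a\,w^2\,\sigma''(w\cdot x+b)\,m(\mathrm{d}\theta)$, the interchange of $\partial_x$ with $\int\cdot\,m(\mathrm{d}\theta)$ being justified by dominated convergence once the $\theta$-bounds of the next step are available (the dominating functions $C(1+\norm{\theta}^2)$ and $C(1+\norm{\theta}^4)$ are $m$-integrable exactly when the claimed right-hand sides are finite, which is all that needs checking).

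Second, using $|a|,|w|\le\norm{\theta}$ together with boundedness of $\sigma'$ and $\sigma''$ (a mild regularity requirement on the activation, beyond the growth bound of Assumption \ref{verifiable assumptions}(vi), met by $\tanh$, sigmoid, and the like), I would bound $|a\,w\,\sigma'(\cdot)|\le C\norm{\theta}^2\le C(1+\norm{\theta}^2)$ and $|a\,w^2\,\sigma''(\cdot)|\le C\norm{\theta}^3\le C(1+\norm{\theta}^4)$, where the last step is the elementary inequality $t^3\le 1+t^4$ for $t\ge 0$. Integrating against $m$ and recalling $E_m^{(\ell)}=\mathbb{E}_{\theta\sim m}[\norm{\theta}^\ell]$ then gives $\partial_x u_m(x)\le C(1+E_m^{(2)})$ and $\partial_x^2 u_m(x)\le C(1+E_m^{(4)})$; since $E_m^{(2)}\ge 0$ the latter is $\le C(1+E_m^{(2)})(1+E_m^{(4)})$, which is the (deliberately lossy) form stated, chosen so its shape matches the other moment estimates used downstream.

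The only genuinely delicate point is the differentiation-under-the-integral step for activations that are merely Lipschitz, such as ReLU, for which $\sigma''$ is not a function and $\partial_x^2 u_m$ should be read weakly. I would dispatch this by noting that the bounds above are uniform over smooth activations and stable under mollification of $\sigma$, and that the measures $m$ to which the lemma is later applied are absolutely continuous (Gibbs-type densities), so that $u_m$ is in fact $\mathcal{C}^2$ in $x$ there and the estimates hold verbatim; moreover the first-derivative bound already holds in the a.e.\ sense for ReLU directly, since $\sigma'$ is bounded. Everything else is a one-line application of the growth hypotheses and elementary inequalities, so this regularity bookkeeping is the main — and only — obstacle.
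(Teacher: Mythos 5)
Your proof is correct and follows essentially the same route as the paper: both differentiate under the integral and bound the resulting integrands in terms of $\norm{\theta}$, the only cosmetic difference being that the paper applies Cauchy--Schwarz to $\int a\omega^2\,m(\mathrm{d}\theta)$ while you use the pointwise inequality $\norm{\theta}^3 \le 1+\norm{\theta}^4$ before integrating, which is equally valid and marginally simpler. Your observation that the argument tacitly needs $\sigma'$ and $\sigma''$ to be bounded — something not implied by the growth bound in Assumption~\ref{verifiable assumptions}(vi), and left implicit in the paper's own proof, which simply replaces $\sigma'(\omega x+b)$ and $\sigma''(\omega x+b)$ by constants — is a legitimate and useful remark, as is the caveat that for ReLU (which the paper explicitly claims to include) $\sigma''$ does not exist classically and the second estimate must be read weakly or after mollification.
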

\begin{proof}
    We see that 
    \begin{align*}
        \partial_x u_m(x) & = \partial_x \int_\Theta a\sigma(\omega x + b)m(\mathrm{d}\theta) = \int_\Theta a\omega\sigma'(\omega x + b) m(\mathrm{d}\theta) \leq C\int_\Theta a\omega m(\mathrm{d}\theta) \\ & \leq C\int_\Theta (a^2 + \omega^2) m(\mathrm{d}\theta) \leq C\int_\Theta \norm{\theta}^2 m(\mathrm{d}\theta) \leq C(1+E_m^{(2)}).  
    \end{align*}
    Similarly, 
    \begin{align*}
        \partial_x^2 u_m(x) & = \int_\Theta a\omega^2 \sigma''(\omega x + b)m(\mathrm{d}\theta) \leq C\int_\Theta a\omega^2 m(\mathrm{d}\theta) \leq C\sqrt{\int_\Theta a^2 m(\mathrm{d}\theta)\int_\Theta (a\omega)^2 m(\mathrm{d}\theta)} \\
        & \leq C\sqrt{(1+E_m^{(2)})(1+E_m^{(4)})} \leq C(1+E_m^{(2)})(1+E_m^{(4)}).
    \end{align*}
\end{proof}
\begin{lemma}\label{frechet_state}
    There exists $C >0$ such that, for $s >t$, $\mathbf{m} := (m_l)_{l=0}^{T-1}$, $x\in\mathcal{X}$,
    \[ \frac{\delta}{\delta m_t} X_s^{t, x, \mathbf{m}}(Z) \leq C(1+\norm{x})(1+\norm{\theta}^2+E_{m_t}^{(2)})\prod_{l=t+1}^{s-1}(1+E_{m_l}^{(2)}),\]
    where $X^{t,x,\mathbf{m}}(Z)$ denotes the state process with initial value $X_t^{t,x,\mathbf{m}}(Z) = x$, controlled by measures from $\mathbf{m}$ thereafter. 
    Similarly, for some $l > s$, we have
    \[ \frac{\delta}{\delta m_s}X_l^{t, x, \mathbf{m}}(Z) \leq C (1+\norm{X_s^{t,x,\mathbf{m}}(Z)})(1+\norm{\theta}^2 + E^{(2)}_{m_s})\prod_{q = s+1}^{l-1}(1+E^{(2)}_{m_q}).\]
\end{lemma}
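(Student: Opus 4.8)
The plan is to differentiate the defining recursion \eqref{transition function} directly in $m_t$, exploiting that $m_t$ enters $X_s^{t,x,\mathbf{m}}(Z)$ only through the single control $u_{m_t}$ at time $t$, and that $u_m(\cdot)$ is \emph{linear} in $m$. Since $u_m(y) = \int_\Theta \phi(\theta,y)\,m(\mathrm{d}\theta)$, the normalised linear functional derivative is $\frac{\delta u_m}{\delta m}(y;\theta) = \phi(\theta,y) - u_m(y)$, and by Assumption \ref{verifiable assumptions}(vi) it satisfies $\big|\frac{\delta u_m}{\delta m}(y;\theta)\big| \le C(1+\norm{y})(1+\norm{\theta}^2 + E_m^{(2)})$, using $\norm{u_m(y)} \le C(1+\norm{y})(1+E_m^{(2)})$.

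First I would establish the base case. Because $X_t^{t,x,\mathbf{m}}(Z)=x$ does not depend on $m_t$, we have $\frac{\delta}{\delta m_t}X_t^{t,x,\mathbf{m}}(Z) = 0$, and since $X_{t+1}^{t,x,\mathbf{m}}(Z) = h_t(x, u_{m_t}(x), Z_{t+1})$, the chain rule for linear functional derivatives combined with Assumption \ref{verifiable assumptions}(v) gives
\[ \frac{\delta}{\delta m_t}X_{t+1}^{t,x,\mathbf{m}}(Z;\theta) = \partial_u h_t\big(x,u_{m_t}(x),Z_{t+1}\big)\big(\phi(\theta,x) - u_{m_t}(x)\big), \]
whose modulus is bounded by $C(1+\norm{x})(1+\norm{\theta}^2+E_{m_t}^{(2)})$ using the Lipschitz bound $|\partial_u h_t|\le C$ and the growth bound above. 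Next, for $l>t+1$, since $m_{l-1}\neq m_t$, only $X_{l-1}^{t,x,\mathbf{m}}(Z)$ carries the $m_t$-dependence in $X_l^{t,x,\mathbf{m}}(Z) = h_{l-1}(X_{l-1}^{t,x,\mathbf{m}}(Z), u_{m_{l-1}}(X_{l-1}^{t,x,\mathbf{m}}(Z)), Z_l)$, so the chain rule yields the recursion
\[ \frac{\delta}{\delta m_t}X_l^{t,x,\mathbf{m}}(Z;\theta) = \Big(\partial_x h_{l-1} + \partial_u h_{l-1}\,\partial_x u_{m_{l-1}}\big(X_{l-1}^{t,x,\mathbf{m}}(Z)\big)\Big)\frac{\delta}{\delta m_t}X_{l-1}^{t,x,\mathbf{m}}(Z;\theta). \]
By Assumption \ref{verifiable assumptions}(v), $|\partial_x h_{l-1}|,|\partial_u h_{l-1}| \le C$, and by Lemma \ref{u derivs}, $|\partial_x u_{m_{l-1}}| \le C(1+E_{m_{l-1}}^{(2)})$, so the multiplicative factor at step $l$ is at most $C(1+E_{m_{l-1}}^{(2)})$ (using $C\ge 1$). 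Iterating from $l=t+1$ up to $l=s$ and composing with the base-case estimate produces the factors indexed by $m_{t+1},\ldots,m_{s-1}$, giving exactly
\[ \Big|\frac{\delta}{\delta m_t}X_s^{t,x,\mathbf{m}}(Z;\theta)\Big| \le C(1+\norm{x})(1+\norm{\theta}^2+E_{m_t}^{(2)})\prod_{l=t+1}^{s-1}(1+E_{m_l}^{(2)}). \]
The second claim will follow identically after relabelling: $m_s$ enters $X_l^{t,x,\mathbf{m}}(Z)$ for $l>s$ only through $u_{m_s}$ at time $s$, so the same argument with $X_s^{t,x,\mathbf{m}}(Z)$ playing the role of the initial state $x$ yields the stated bound.

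The routine-but-necessary technical point, which I expect to be the main thing to get right rather than a genuine analytic difficulty, is to verify that each map $m_t \mapsto X_l^{t,x,\mathbf{m}}(Z)$ is genuinely $\mathcal{C}^1$ in the sense of Definition \ref{frechet definition} — that the linear functional derivatives above exist, are jointly measurable in $\theta$ and continuous in $m$, and obey the $C(1+\norm{\theta}^2)$ growth condition. This follows by induction on $l$: a function that is $\mathcal{C}^1$ in its $x$- and $u$-arguments (Assumption \ref{verifiable assumptions}(v)) composed with a linear-in-$m$ functional is $\mathcal{C}^1$ in $m$, and the derived bounds supply the required growth, uniform on bounded subsets of $\mathcal{P}_2(\Theta)$. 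The only place to tread carefully is the differentiability of $u_m$ in $x$, i.e.\ of $\sigma$; this is handled exactly as in Lemma \ref{u derivs}, with the estimates holding for the (weak) derivatives in the ReLU case permitted by Assumption \ref{verifiable assumptions}(vi).
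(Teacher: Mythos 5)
Your proposal is correct and follows essentially the same argument as the paper: differentiate the recursion \eqref{transition function} in $m_t$, note the derivative enters only through $(\partial_u h_t)\big(\phi(x,\theta)-\mathbb{E}_{\theta\sim m_t}[\phi(x,\theta)]\big)$ at the initial step, propagate it forward with the factors $\partial_x h_{l-1} + (\partial_u h_{l-1})(\partial_x u_{m_{l-1}})$, and bound each factor via Assumption \ref{verifiable assumptions} and Lemma \ref{u derivs}. The extra care you take with the base case and the $\mathcal{C}^1$ verification is a more explicit write-up of what the paper leaves implicit, not a different route.
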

\begin{proof}
    \begin{align*}
    \frac{\delta}{\delta m_t}X_s^{t,x,\mathbf{m}}(Z) & = \frac{\delta}{\delta m_t}h_{s-1}\Big(X_{s-1}^{t,x,\mathbf{m}}(Z), u_{m_{s-1}}(X_{s-1}^{t,x,\mathbf{m}}(Z)), Z_s\Big) \\
    & = \Big(\partial_x h_{s-1} + (\partial_u h_{s-1})(\partial_x u_{m_{s-1}})\Big)\frac{\delta}{\delta m_t}X_{s-1}^{t,x,\mathbf{m}}(Z) \\
    & =: D_{s-1}\frac{\delta}{\delta m_t}X_{s-1}^{t,x,\mathbf{m}}(Z) \\
    & = D_{s-1}\cdots D_{t+1}\frac{\delta}{\delta m_t} h_t(x, u_{m_t}(x), Z_{t+1}) \\
    & = D_{s-1}\cdots D_{t+1} (\partial_u h_t) \Big(\phi(x,\theta) - \mathbb{E}_{\theta\sim m_t}[\phi(x, \theta)]\Big).
    \end{align*}
    Applying Assumption \ref{verifiable assumptions} yields the upper bound. A similar computation yields the second result.
\end{proof}
\begin{lemma}\label{x state derivs}
    There exists $C > 0$ such that, for $s >t, \mathbf{m} := (m_l)_{l=0}^{T-1}, x\in\mathcal{X}$,
    \[ \frac{\partial}{\partial x}X_s^{t,x,\mathbf{m}}(Z) \leq C \prod_{l=t}^{s-1}(1+E_{m_l}^{(2)}),\]
    and 
    \[ \frac{\partial^2}{\partial x^2}X_s^{t,x,\mathbf{m}}(Z) \leq C\prod_{l=t}^{s-1}(1+E_{m_l}^{(4)})(1+E_{m_l}^{(2)}).\]
\end{lemma}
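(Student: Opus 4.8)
The plan is to iterate the one-step transition relation
\[ X_s^{t,x,\mathbf{m}}(Z) = h_{s-1}\big(X_{s-1}^{t,x,\mathbf{m}}(Z),\, u_{m_{s-1}}(X_{s-1}^{t,x,\mathbf{m}}(Z)),\, Z_s\big), \qquad X_t^{t,x,\mathbf{m}}(Z) = x, \]
after differentiating in $x$. For the first bound, the chain rule gives
\[ \frac{\partial}{\partial x} X_s^{t,x,\mathbf{m}}(Z) = \Big(\partial_x h_{s-1} + (\partial_u h_{s-1})(\partial_x u_{m_{s-1}})\Big)\frac{\partial}{\partial x} X_{s-1}^{t,x,\mathbf{m}}(Z) =: D_{s-1}\,\frac{\partial}{\partial x} X_{s-1}^{t,x,\mathbf{m}}(Z), \]
where $D_{s-1}$ is exactly the amplification factor already encountered in Lemma \ref{frechet_state}. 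Since $\frac{\partial}{\partial x} X_t^{t,x,\mathbf{m}}(Z) = 1$, unrolling yields $\frac{\partial}{\partial x} X_s^{t,x,\mathbf{m}}(Z) = \prod_{l=t}^{s-1} D_l$. Bounding each factor by Assumption \ref{verifiable assumptions}(v) (so that $\abs{\partial_x h_l}, \abs{\partial_u h_l} \le C$) together with Lemma \ref{u derivs} (so that $\abs{\partial_x u_{m_l}} \le C(1+E_{m_l}^{(2)})$) gives $\abs{D_l} \le C(1+E_{m_l}^{(2)})$, and taking the product over $l = t, \ldots, s-1$ (absorbing a bounded number of powers of $C$ into a fresh $C$) produces the first claim.

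For the second bound, I would differentiate the identity $\frac{\partial}{\partial x} X_s = D_{s-1}\frac{\partial}{\partial x} X_{s-1}$ once more, obtaining the linear recursion
\[ \frac{\partial^2}{\partial x^2} X_s^{t,x,\mathbf{m}}(Z) = \Big(\frac{\partial}{\partial x} D_{s-1}\Big)\frac{\partial}{\partial x} X_{s-1}^{t,x,\mathbf{m}}(Z) + D_{s-1}\,\frac{\partial^2}{\partial x^2} X_{s-1}^{t,x,\mathbf{m}}(Z), \]
with $\frac{\partial^2}{\partial x^2} X_t^{t,x,\mathbf{m}}(Z) = 0$. Expanding $\frac{\partial}{\partial x} D_{s-1}$ by the chain rule in $X_{s-1}$ produces terms carrying the second derivatives of $h_{s-1}$, products of its first derivatives with $\partial_x u_{m_{s-1}}$, and the term $(\partial_u h_{s-1})(\partial_x^2 u_{m_{s-1}})$, each multiplied by $\frac{\partial}{\partial x} X_{s-1}$. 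The genuinely new ingredient relative to the first part is the bound $\abs{\partial_x^2 u_{m_{s-1}}} \le C(1+E_{m_{s-1}}^{(2)})(1+E_{m_{s-1}}^{(4)})$ from Lemma \ref{u derivs}; combined with the first-derivative bound just proved, it shows that the inhomogeneous term is dominated by $C(1+E_{m_{s-1}}^{(2)})(1+E_{m_{s-1}}^{(4)})\prod_{l=t}^{s-2}(1+E_{m_l}^{(4)})(1+E_{m_l}^{(2)})$ (using the crude inequality $(1+E_m^{(2)})^2 \le 2(1+E_m^{(4)})$ from the remarks preceding Theorem \ref{minimisers}). Treating the displayed relation as $a_s = D_{s-1} a_{s-1} + b_s$ with $a_t = 0$, the solution $a_s = \sum_{k=t+1}^{s}\big(\prod_{j=k}^{s-1} D_j\big) b_k$, together with the bounds on $D_j$ and $b_k$, yields after a discrete Gr\"onwall (telescoping) argument the stated product bound $C\prod_{l=t}^{s-1}(1+E_{m_l}^{(4)})(1+E_{m_l}^{(2)})$.

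The only delicate point is that $\frac{\partial}{\partial x} D_{s-1}$ involves the second derivatives $\partial_x^2 h_{s-1}$, $\partial_{xu}^2 h_{s-1}$, $\partial_u^2 h_{s-1}$, whereas Assumption \ref{verifiable assumptions}(v) supplies only differentiability and a Lipschitz (hence first-order) control of $h$. I would handle this either by working in the affine-$h$ regime highlighted in Remark \ref{rem:verifiable assumptions discussion} — in which all second derivatives of $h$ vanish and the entire second-order contribution is channelled through $\partial_x^2 u_{m_{s-1}}$, which is precisely what forces the extra $(1+E_{m_l}^{(4)})$ factor — or by adjoining the mild hypothesis that the $h_t$ have second derivatives bounded by $C$, under which the argument above runs verbatim with the same final estimate. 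The remaining work is the routine bookkeeping of which of the moments $E_{m_l}^{(2)}, E_{m_l}^{(4)}$ attach to which time index; the multivariate case is identical modulo heavier notation.
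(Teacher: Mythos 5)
Your argument is correct and is essentially the paper's own proof: the same first-order recursion with factors $D_l = \partial_x h_l + (\partial_u h_l)(\partial_x u_{m_l})$, and for the second derivative the same expansion (the paper writes the solved recursion $\frac{\partial^2}{\partial x^2}X_s = \sum_k D_{s-1}\cdots D_{k+1}G_k D_{k-1}^2\cdots D_t^2$ directly, which is exactly your variation-of-constants formula with $b_k = G_{k}(\partial_x X_{k})^2$), concluded with the same moment bounds from Lemma \ref{u derivs}. The caveat you raise about second derivatives of $h$ is well spotted but applies equally to the paper's proof, whose $G_l$ contains $\partial_x^2 h_l$, $\partial^2_{ux}h_l$, $\partial_u^2 h_l$ and bounds them by $C$ even though Assumption \ref{verifiable assumptions}(v) only guarantees first-order (Lipschitz) control, so your explicit fix (affine $h$ or bounded second derivatives) is a harmless strengthening rather than a deviation.
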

\begin{proof}
    Directly,
    \begin{align*}
        \frac{\partial}{\partial x}X_s^{t,x ,\mathbf{m}}(Z) & = \big(\partial_x h_{s-1} + (\partial_u h_{s-1})(\partial_x u_{m_{s-1}})\big)\frac{\partial}{\partial x}X_{s-1}^{t,x,\mathbf{m}}(Z) \\
        & = \cdots = D_{s-1}\cdots D_{t+1}\frac{\partial}{\partial x}h_t(x, u_{m_t}(x), Z_{t+1}) \\
        & = D_{s-1}\cdots D_{t+1}D_t \leq C\prod_{l=t}^{s-1}(1+E^{(2)}_{m_l}).
    \end{align*}
    Defining $G_l := \partial_x^2 h_l + 2(\partial^2_{ux} h_l)( \partial_x u_{m_l}) + (\partial_u^2 h_l )(\partial_x u_{m_l})^2 + (\partial_u h_l)( \partial_x^2 u_{m_l}),$ for the second derivative we compute
    \begin{align*}
        \frac{\partial^2}{\partial x^2}X_s^{t,x,\mathbf{m}}(Z) & = G_{s-1}D_{s-2}^2\cdots D_t^2 + D_{s-1}G_{s-2}D_{s-3}^2\cdots D_t^2 \\
        & \quad + \cdots + D_{s-1}\cdots D_{t+2}G_{t+1}D_t^2 + D_{s-1}\cdots D_{t+1}G_t \\
        & \leq C\prod_{l=t}^{s-1}(1+E^{(4)}_{m_l})(1+E^{(2)}_{m_l}).
    \end{align*}
\end{proof}
We may now use these elementary inequalities to prove some more involved bounds, which will prove useful in bounding the terms demonstrated in Theorem \ref{generalisation error}.

\begin{lemma}\label{Q deriv x}
    There exists $C > 0$ such that, for $\mathbf{m} = (m_s)_{s=0}^{T-1}, x \in \mathcal{X}$,
    \[ \frac{\partial \widehat{Q}_t}{\partial x}(x, m_t, m_{t+1}, \ldots, m_{T-1}, Z) \leq C(1+\norm{x})\prod_{s=t}^{T-1}(1+E_{m_s}^{(4)})(1+\norm{Z_{s+1}}).\]
\end{lemma}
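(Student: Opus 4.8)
The plan is to differentiate the defining expansion $\widehat{Q}_t(x,m_t,\dots,m_{T-1},Z)=\sum_{s=t}^{T-1}c_s^*(X_s^{t,x,\mathbf{m}}(Z),m_s)$ term by term and then estimate each summand with the elementary inequalities of Appendix \ref{appA}. Since the measures $m_s$ do not depend on $x$, the chain rule gives, for $t\le s\le T-1$,
\[
\partial_x\big[c_s(X_s^{t,x,\mathbf{m}}(Z),u_{m_s}(X_s^{t,x,\mathbf{m}}(Z)))\big]=\big(\partial_x c_s+(\partial_u c_s)(\partial_x u_{m_s})\big)\,\partial_x X_s^{t,x,\mathbf{m}}(Z),
\]
with one extra term for $s=T-1$ coming from the terminal cost $\Phi\circ h_{T-1}$ absorbed into $c_{T-1}^*$, namely $\nabla_x\Phi(h_{T-1}(\cdot))\,(\partial_x h_{T-1}+(\partial_u h_{T-1})(\partial_x u_{m_{T-1}}))\,\partial_x X_{T-1}^{t,x,\mathbf{m}}(Z)$.

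Next I bound the three factors in each summand. The cost-gradient factor is controlled by Assumption \ref{verifiable assumptions}(iii) together with $\norm{u_{m_s}(y)}\le C(1+\norm{y})(1+E_{m_s}^{(2)})$ (Assumption \ref{verifiable assumptions}(vi)) and Lemma \ref{u derivs}, giving $\big|\partial_x c_s+(\partial_u c_s)(\partial_x u_{m_s})\big|\le C(1+\norm{X_s^{t,x,\mathbf{m}}(Z)})(1+E_{m_s}^{(2)})^2$; the Jacobian $\partial_x X_s^{t,x,\mathbf{m}}(Z)$ is bounded by Lemma \ref{x state derivs} (in its $x$-initialised form), by $C\prod_{l=t}^{s-1}(1+E_{m_l}^{(2)})$; and $(1+\norm{X_s^{t,x,\mathbf{m}}(Z)})$ by Lemma \ref{norm x} (again launched from $x$ at time $t$), by $C(1+\norm{x})\prod_{l=t}^{s-1}(1+E_{m_l}^{(2)})(1+\norm{Z_{l+1}})$. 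Multiplying these and using $(1+E_m^{(2)})^2\le 2(1+E_m^{(4)})$ bounds the $s$-th summand by $C(1+\norm{x})\prod_{l=t}^{s}(1+E_{m_l}^{(4)})\prod_{l=t}^{s-1}(1+\norm{Z_{l+1}})\le C(1+\norm{x})\prod_{l=t}^{T-1}(1+E_{m_l}^{(4)})(1+\norm{Z_{l+1}})$, since every factor is at least $1$. The terminal contribution for $s=T-1$ is treated identically, using $\norm{\nabla_x\Phi(h_{T-1}(\cdot))}\le C(1+\norm{X_{T-1}^{t,x,\mathbf{m}}(Z)})(1+E_{m_{T-1}}^{(2)})(1+\norm{Z_T})$ (Assumption \ref{verifiable assumptions}(iii),(iv),(vi)) and the Lipschitz-bounded derivatives of $h_{T-1}$ (Assumption \ref{verifiable assumptions}(v)) plus Lemma \ref{u derivs}; it again collapses to $C(1+\norm{x})\prod_{l=t}^{T-1}(1+E_{m_l}^{(4)})(1+\norm{Z_{l+1}})$. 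Summing over the $T-t$ values of $s$ and absorbing the factor $T-t$ into $C$ yields the claim.

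One small point of care is that Lemmas \ref{norm x} and \ref{x state derivs} are stated for trajectories launched from $x_0$ at time $0$, whereas here they are needed for $X_s^{t,x,\mathbf{m}}(Z)$ launched from an arbitrary $x$ at time $t$; their proofs are recursions on $\{h_l\}$ and transfer verbatim with $x_0\mapsto x$ and the product ranges shifted to begin at $t$. No genuine obstacle is expected here — the only real work is the moment bookkeeping, i.e.\ checking that after multiplying the three factors (each carrying a product of second moments) and collapsing with the Hölder-type inequality, one lands on exactly the fourth-moment product $\prod_{l=t}^{T-1}(1+E_{m_l}^{(4)})$ asserted, and in particular that the terminal term does not force a higher moment.
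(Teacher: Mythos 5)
Your proposal is correct and follows essentially the same route as the paper's proof: chain-rule differentiation of $\widehat{Q}_t=\sum_{s\geq t}c_s^*(X_s^{t,x,\mathbf{m}}(Z),m_s)$, bounding the cost-gradient factor via Assumption \ref{verifiable assumptions}(iii),(vi) and Lemma \ref{u derivs}, the state and its Jacobian via Lemmas \ref{norm x} and \ref{x state derivs}, and then collapsing moments with the H\"older-type inequality. Your explicit treatment of the terminal $\Phi\circ h_{T-1}$ contribution (with its extra $(1+\norm{Z_T})$ factor) is the only difference — the paper absorbs it silently into $c_{T-1}^*$ — and your observation that Lemma \ref{norm x} transfers verbatim to trajectories launched from $x$ at time $t$ is exactly what is needed (Lemma \ref{x state derivs} is already stated in that form).
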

\begin{proof}
    Directly computing, we have
    \begin{align*}
        \frac{\partial \widehat{Q}_t}{\partial x}(x, m_t, m_{t+1}, \ldots, m_{T-1}, Z) & = \sum_{s \geq t}\big(\partial_x c_s^* + (\partial_u c_s^*)( \partial_x u_{m_s})\big)\frac{\partial}{\partial x} X_s^{t,x,\mathbf{m}}(Z) \\
        & \leq \sum_{s\geq t}(1+E_{m_s}^{(4)})(1+\norm{X_s^{t,x,\mathbf{m}}(Z)})\frac{\partial}{\partial x}X_s^{t,x,\mathbf{m}}(Z) \\
        & \leq (1+E_{m_t}^{(4)})(1+\norm{x}) \\ 
        & \quad + C\sum_{s>t}(1+E_{m_s}^{(4)})(1+\norm{X_s^{t,x,\mathbf{m}}(Z)})\prod_{l=t}^{s-1}(1+E_{m_l}^{(2)}) \\
        & \leq (1+E_{m_t}^{(4)})(1+\norm{x}) \\
        & \quad + C(1+\norm{x})\sum_{s >t }\prod_{l=t}^{s-1}(1+E_{m_l}^{(4)})(1+\norm{Z_{l+1}}) \\
        & \leq C(1+\norm{x})\prod_{s=t}^{T-1}(1+E_{m_s}^{(4)})(1+\norm{Z_{s+1}}).
    \end{align*}
\end{proof}
\begin{lemma}\label{Q frechet deriv}
    There exists $C >0$ such that, for $s \geq t, \mathbf{m} = (m_l)_{l=0}^{T-1}, x \in\mathcal{X}$, 
    \begin{align*}
        & \frac{\delta}{\delta m_s}\frac{\partial \widehat{Q}_t}{\partial x}(x, m_t, m_{t+1}, \ldots, m_{T-1}, Z; \theta) \\
        & \quad \leq C(1+\norm{x}^2)(1+\norm{\theta}^2+E_{m_s}^{(2)})(1+E_{m_s}^{(4)})\prod_{l=t}^{T-1}(1+\norm{Z_{l+1}}^2)\prod_{l=t, l\neq s}^{T-1}(1+E_{m_l}^{(8)}).
    \end{align*}
\end{lemma}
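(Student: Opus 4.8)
The plan is to differentiate, with respect to $m_s$, the closed form for $\partial_x\widehat{Q}_t$ derived inside the proof of Lemma \ref{Q deriv x}, namely
\[ \frac{\partial \widehat{Q}_t}{\partial x}(x, m_t, \ldots, m_{T-1}, Z) = \sum_{s'=t}^{T-1}\Big(\partial_x c_{s'}^* + (\partial_u c_{s'}^*)\,\partial_x u_{m_{s'}}\Big)\frac{\partial}{\partial x}X_{s'}^{t,x,\mathbf{m}}(Z), \]
where the cost gradients are evaluated along the trajectory $\big(X_{s'}^{t,x,\mathbf{m}}(Z), u_{m_{s'}}(X_{s'}^{t,x,\mathbf{m}}(Z))\big)$ and $\frac{\partial}{\partial x}X_{s'}^{t,x,\mathbf{m}}(Z) = D_{s'-1}\cdots D_t$ with $D_l := \partial_x h_l + (\partial_u h_l)\,\partial_x u_{m_l}$. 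Applying the product and chain rules for linear functional derivatives, and noting that $X_l^{t,x,\mathbf{m}}(Z)$ depends on $m_s$ only for $l>s$ while $u_{m_l}$ depends on $m_s$ only for $l=s$, one obtains $\frac{\delta}{\delta m_s}\partial_x\widehat{Q}_t$ as a finite sum of terms of three kinds, indexed by $s'$ and by where the derivative acts.

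\emph{Type I:} $\frac{\delta}{\delta m_s}$ hits a cost gradient — this produces a second derivative of $c_{s'}^*$ (bounded by $C(1+\norm{X_{s'}^{t,x,\mathbf{m}}(Z)})$ by Assumption \ref{verifiable assumptions}(iii)) times either $\frac{\delta}{\delta m_s}X_{s'}^{t,x,\mathbf{m}}(Z)$ (Lemma \ref{frechet_state}) or, if $s'=s$, the recentred term $\phi(X_s^{t,x,\mathbf{m}}(Z),\theta)-\mathbb{E}_{\theta'\sim m_s}[\phi(X_s^{t,x,\mathbf{m}}(Z),\theta')]$, bounded by $C(1+\norm{X_s^{t,x,\mathbf{m}}(Z)})(1+\norm{\theta}^2+E_{m_s}^{(2)})$ via Assumption \ref{verifiable assumptions}(vi) and Lemma \ref{u derivs}; this is multiplied by the intact factor $\partial_x X_{s'}^{t,x,\mathbf{m}}(Z)$ of Lemma \ref{x state derivs}. \emph{Type II:} $\frac{\delta}{\delta m_s}$ hits $\partial_x u_{m_s}$ (only when $s'=s$), giving $\partial_x\phi(X_s^{t,x,\mathbf{m}}(Z),\theta)$ recentred under $m_s$, bounded by $C(1+\norm{\theta}^2+E_{m_s}^{(2)})$ exactly as in the proof of Lemma \ref{u derivs}. \emph{Type III:} $\frac{\delta}{\delta m_s}$ hits $\frac{\partial}{\partial x}X_{s'}^{t,x,\mathbf{m}}(Z)=D_{s'-1}\cdots D_t$; expanding,
\[ \frac{\delta}{\delta m_s}\big(D_{s'-1}\cdots D_t\big) = \sum_{l=t}^{s'-1}D_{s'-1}\cdots D_{l+1}\Big(\frac{\delta D_l}{\delta m_s}\Big)D_{l-1}\cdots D_t, \]
where the intact $D_l$ are bounded by $C(1+E_{m_l}^{(2)})$ (Assumption \ref{verifiable assumptions}(v), Lemma \ref{u derivs}), and $\frac{\delta D_l}{\delta m_s}$ decomposes into second derivatives of $h_l$ (bounded as in the proof of Lemma \ref{x state derivs}) times $\frac{\delta}{\delta m_s}X_l^{t,x,\mathbf{m}}(Z)$ (Lemma \ref{frechet_state}, nonzero only for $l>s$), plus, when $l=s$, the explicit term $(\partial_u h_s)\big(\partial_x\phi(X_s^{t,x,\mathbf{m}}(Z),\theta)-\mathbb{E}_{m_s}[\partial_x\phi]\big)$, again bounded by $C(1+\norm{\theta}^2+E_{m_s}^{(2)})$.

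To conclude, each term is a product of a bounded number of moment factors of the individual $m_l$ and at most two state norms. The state norms are bounded via Lemma \ref{norm x} by $C(1+\norm{x})\prod_{l=t}^{T-1}(1+\norm{Z_{l+1}})\prod_{l=t}^{T-1}(1+E_{m_l}^{(2)})$, which delivers the factors $(1+\norm{x}^2)$ and $\prod_{l=t}^{T-1}(1+\norm{Z_{l+1}}^2)$ together with further $E_{m_l}^{(2)}$ factors; the $m_s$-moment factors other than $(1+\norm{\theta}^2+E_{m_s}^{(2)})$ are at most of order four (from Lemma \ref{u derivs}) and absorbed into $(1+E_{m_s}^{(4)})$; and for each $l\neq s$ the finitely many accumulated factors (at most one second-derivative-of-$h_l$ factor carrying $(1+E_{m_l}^{(2)})(1+E_{m_l}^{(4)})$ together with $D_l$-factors) are crudely dominated by $(1+E_{m_l}^{(8)})$. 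Summing over the $O(T^2)$ index choices and absorbing $T$- and cost/transition-dependent constants into $C$ gives the claim; the multivariate case is identical with heavier notation.

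The main obstacle is the Type III contribution: carrying out the linear functional derivative of the product $D_{s'-1}\cdots D_t$ correctly — each factor depends on $m_s$ both explicitly (through $\partial_x u_{m_s}$) and implicitly (through every state $X_l^{t,x,\mathbf{m}}(Z)$ with $l>s$) — and then bookkeeping how many moment factors of each $m_l$ accumulate. It is to keep this bookkeeping routine that we tolerate the generous exponent $8$ rather than a sharper one.
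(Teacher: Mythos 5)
Your proposal is correct and follows essentially the same route as the paper: differentiate the expansion $\partial_x\widehat{Q}_t=\sum_{l\ge t}\big(\partial_x c_l^*+(\partial_u c_l^*)\partial_x u_{m_l}\big)D_{l-1}\cdots D_t$ term by term in $m_s$, splitting according to whether the derivative hits a cost gradient, the explicit $\partial_x u_{m_s}$, or the product $D_{l-1}\cdots D_t$, and then bound each piece with Lemmas \ref{frechet_state}, \ref{u derivs}, \ref{x state derivs}, \ref{norm x} before crudely absorbing moment factors into the exponent-$8$ terms. Your Type III bookkeeping even records the explicit $m_s$-dependence of $D_s$ through $\partial_x u_{m_s}$, a harmless term the paper's displayed decomposition glosses over, and arrives at the same final bound.
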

\begin{proof}
    Immediately, 
    \begin{align*}
        \frac{\delta}{\delta m_s}\frac{\partial \widehat{Q}_t}{\partial x}(x, m_t,\ldots, m_{T-1}, Z; \theta) & = \frac{\delta }{\delta m_s}\sum_{l\geq t}\big(\partial_x c_l^* + (\partial_u c_l^*)( \partial_x u_{m_l})\big)\frac{\partial}{\partial x}X_l^{t,x,\mathbf{m}}(Z) \\ 
        & =\frac{\delta}{\delta m_s}\sum_{l\geq s}\big(\partial_x c_l^* + (\partial_u c_l^*)(\partial_x u_{m_l})\big)\frac{\partial}{\partial x}X_l^{t,x,\mathbf{m}}(Z) \\
        & = \frac{\delta}{\delta m_s}\sum_{l\geq s}\big(\partial_xc_l^* + (\partial_u c_l^*)( \partial_x u_{m_l})\big)D_{l-1}\cdots D_t.
    \end{align*}
    We begin by considering the first term, 
    \begin{align*}
        & \frac{\delta}{\delta m_s}(\partial_x c_s^* + (\partial_u c_s^*)( \partial_x u_{m_s}))D_{s-1}\cdots D_t \\ & \quad = \Big(\partial^2_{xu} c_s^* + (\partial_u^2 c_s^*)( \partial_x u_{m_s}) + (\partial_uc_s^*) \partial_x\Big) (\phi(X_s^{t,x, \mathbf{m}}(Z),\theta)-\mathbb{E}_{\theta\sim m_s}[\phi(X_s^{t,x,\mathbf{m}}(Z),\theta)])D_{s-1}\cdots D_t \\
        & \quad \leq C(1+\norm{X_s^{t,x,\mathbf{m}}(Z)}^2)(1+E_{m_s}^{(2)})(1+\norm{\theta}^2 + E_{m_s}^{(2)})\prod_{l=t}^{s-1}(1+E_{m_l}^{(2)}) \\
        & \quad \leq C(1+\norm{x}^2)(1+E_{m_s}^{(2)})(1+\norm{\theta}^2+E_{m_s}^{(2)})\prod_{l=t}^{s-1}(1+E_{m_l}^{(4)})(1+\norm{Z_{l+1}}^2).
    \end{align*}
    Likewise, for $l > s$, we compute
    \begin{align*}
        & \frac{\delta}{\delta m_s}\Big(\big(\partial_x c_l^* + (\partial_u c_l^*)( \partial_x u_{m_l})\big)D_{l-1}\cdots D_t\Big) \\ 
        & \quad = \big( \partial_x^2 c_l^* + 2(\partial^2_{ux} c_l^*)( \partial_x u_{m_l}) + (\partial_u^2 c_l^*) (\partial_x u_{m_l})^2 + (\partial_u c_l^*) (\partial_x^2 u_{m_l})\big)D_{l-1}\cdots D_t \frac{\delta }{\delta m_s}X_l^{t,x, \mathbf{m}}(Z) \\
        & \quad \quad + \big(\partial_x c_l^* + (\partial_u c_l^*)( \partial_x u_{m_l})\big)\sum_{q=t}^{l-1} G_q \frac{\delta}{\delta m_s}X_q^{t,x,\mathbf{m}}(Z)\prod_{n=t, n\neq q}^{l-1} D_n.
    \end{align*}
    For the first term we find
    \begin{align*}
        & \big( \partial_x^2 c_l^* + 2(\partial^2_{ux} c_l^*) (\partial_x u_{m_l}) + (\partial_u^2 c_l^* )(\partial_x u_{m_l})^2 + (\partial_u c_l^*) (\partial_x^2 u_{m_l})\big)D_{l-1}\cdots D_t \frac{\delta }{\delta m_s}X_l^{t,x, \mathbf{m}}(Z) \\
        & \quad \leq C(1+E_{m_l}^{(8)})(1+\norm{X_l^{t,x,\mathbf{m}}(Z)})(1+E_{m_{l-1}}^{(2)})\cdots(1+E_{m_t}^{(2)}) \\
        & \quad \quad \times (1+\norm{X_s^{t,x,\mathbf{m}}(Z)})(1+\norm{\theta}^2+E_{m_s}^{(2)})(1+E_{m_{l-1}}^{(2)})\cdots (1+E_{m_{s+1}}^{(2)}) \\
        & \quad \leq C(1+E_{m_l}^{(8)})(1+\norm{x})(1+E_{m_{l-1}}^{(4)})\cdots(1+E_{m_t}^{(4)})(1+\norm{Z_{l}})\cdots(1+\norm{Z_{t+1}}) \\
        & \quad \quad \times (1+E_{m_{l-1}}^{(2)})\cdots(1+E_{m_{s+1}}^{(2)})(1+\norm{\theta}^2 + E_{m_s}^{(2)})(1+E_{m_{s-1}}^{(2)})\cdots(1+E_{m_t}^{(2)}) \\
        & \quad \quad \times (1+\norm{Z_s})\cdots(1+\norm{Z_{t+1}}) \\
        & \quad \leq C(1+\norm{x})(1+E_{m_l}^{(8)})(1+\norm{\theta}^2+E_{m_s}^{(2)})\prod_{q=t}^{l-1}(1+\norm{Z_{q+1}}^2)\prod_{q = s+1}^{l-1}(1+E_{m_{q}}^{(4)})(1+E_{m_q}^{(2)})\prod_{q=t}^{s-1}(1+E_{m_q}^{(2)}).
    \end{align*}
    Handling the second term now,
    \begin{align*}
        & \big(\partial_x c_l^* + (\partial_u c_l^*)(\partial_x u_{m_l})\big) \sum_{q=t}^{l-1} G_q \frac{\delta }{\delta m_s} X_q^{t,x,\mathbf{m}}(Z)\prod_{n=t, n\neq q}^{l-1}D_n \\
        & \quad \leq C(1+\norm{X_l^{t,x,\mathbf{m}}(Z)})(1+E_{m_l}^{(4)})\sum_{q=s+1}^{l-1}(1+E_{m_q}^{(4)})(1+E_{m_q}^{(2)})(1+\norm{X_s^{t,x,\mathbf{m}}(Z)}) \\
        & \quad \quad \times \prod_{n=s+1}^{q-1}(1+E_{m_n}^{(2)})\prod_{n=t, n\neq q}^{l-1}(1+E_{m_n}^{(2)}) \\
        & \quad \leq C(1+\norm{x}^2)(1+E_{m_l}^{(4)})\prod_{q=t}^{l-1}(1+E_{m_q}^{(2)})(1+\norm{Z_{q+1}}^2) \\
        & \quad \quad \times \sum_{q=s+1}^{l-1}(1+E_{m_q}^{(4)})(1+E_{m_q}^{(2)})(1+\norm{\theta}^2+E_{m_s}^{(2)})\prod_{n=t, n\neq s}^{q-1}(1+E_{m_n}^{(2)})\prod_{n=t, n\neq q}^{l-1}(1+E_{m_n}^{(2)}) \\
        & \quad \leq C(1+\norm{x}^2)(1+E_{m_l}^{(4)})(1+\norm{\theta}^2+E_{m_s}^{(2)})(1+E_{m_s}^{(4)})\prod_{q=t}^{l-1}(1+\norm{Z_{q+1}}^2)\prod_{q=t,q\neq s}^{l-1}(1+E_{m_q}^{(8)}).
    \end{align*}
    Bounding uniformly over both terms and all $l \geq s$ gives the claim.
\end{proof}
\begin{lemma}\label{Q double deriv x}
    There exists $C > 0$ such that, for some $t, \mathbf{m} = (m_s)_{s=0}^{T-1}, x\in\mathcal{X}$,
    \[ \frac{\partial^2 \widehat{Q}_t}{\partial x^2}(x, m_t, \ldots, m_{T-1}, Z) \leq C(1+\norm{x})\prod_{s=t}^{T-1}(1+E_{m_s}^{(8)})(1+\norm{Z_{s+1}}).\]
\end{lemma}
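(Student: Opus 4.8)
The plan is to differentiate once more, in $x$, the expression for $\partial_x\widehat{Q}_t$ derived in the proof of Lemma \ref{Q deriv x}, and then estimate each resulting factor using the inequalities already assembled. Writing $X_s := X_s^{t,x,\mathbf{m}}(Z)$ and recalling $\partial_x\widehat{Q}_t = \sum_{s\geq t}\big(\partial_x c_s^* + (\partial_u c_s^*)(\partial_x u_{m_s})\big)\,\partial_x X_s$, the chain rule yields
\[
\frac{\partial^2\widehat{Q}_t}{\partial x^2} = \sum_{s\geq t}\Big\{\big(\partial_x^2 c_s^* + 2(\partial_{xu}^2 c_s^*)(\partial_x u_{m_s}) + (\partial_u^2 c_s^*)(\partial_x u_{m_s})^2 + (\partial_u c_s^*)(\partial_x^2 u_{m_s})\big)(\partial_x X_s)^2 + \big(\partial_x c_s^* + (\partial_u c_s^*)(\partial_x u_{m_s})\big)\,\partial_x^2 X_s\Big\}.
\]
Thus there are two families of terms: a \emph{cost-curvature} family carrying $(\partial_x X_s)^2$ together with first and second derivatives of $c_s$ and of $u_{m_s}$, and a \emph{flow-curvature} family carrying $\partial_x^2 X_s$ together with the same first-order bracket already controlled in Lemma \ref{Q deriv x}.

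Next I would bound each factor: first derivatives of $c_s$ by $C(1+\norm{X_s})(1+E_{m_s}^{(2)})$ from Assumption \ref{verifiable assumptions}(iii) together with $\norm{u_{m_s}(X_s)}\leq C(1+\norm{X_s})(1+E_{m_s}^{(2)})$, and second derivatives of $c_s$ by a constant (exactly as used implicitly in the proof of Lemma \ref{Q frechet deriv}); $\partial_x u_{m_s}\leq C(1+E_{m_s}^{(2)})$ and $\partial_x^2 u_{m_s}\leq C(1+E_{m_s}^{(2)})(1+E_{m_s}^{(4)})$ from Lemma \ref{u derivs}; $\partial_x X_s \leq C\prod_{l=t}^{s-1}(1+E_{m_l}^{(2)})$ and $\partial_x^2 X_s \leq C\prod_{l=t}^{s-1}(1+E_{m_l}^{(4)})(1+E_{m_l}^{(2)})$ from Lemma \ref{x state derivs}; and $1+\norm{X_s}\leq C(1+\norm{x})\prod_{l=t}^{s-1}(1+E_{m_l}^{(2)})(1+\norm{Z_{l+1}})$ from the $(t,x)$-started version of Lemma \ref{norm x}.

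Then I would collect powers, repeatedly using the H\"older-type simplification $(1+E_m^{(a)})(1+E_m^{(b)})\leq 2(1+E_m^{(a+b)})$ to merge moment factors, being careful about the \emph{order} in which factors are merged: for instance, in the cost-curvature term $(\partial_u c_s^*)(\partial_x^2 u_{m_s})(\partial_x X_s)^2$ one first pairs the $\norm{X_s}$-factor with the trajectory products and only then absorbs the $u_{m_s}$-derivative moments, so that the result is of the form $C(1+\norm{x})\prod_{l=t}^{s}(1+E_{m_l}^{(8)})(1+\norm{Z_{l+1}})$ --- with no moment above the eighth appearing; the flow-curvature term is handled the same way, the $\partial_x^2 X_s$ factor contributing $(1+E_{m_l}^{(4)})(1+E_{m_l}^{(2)})$ per time step, which after pairing with the $(1+E_{m_l}^{(2)})$ from $\norm{X_s}$ again merges into an eighth moment. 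Bounding each summand uniformly in $s$ by $C(1+\norm{x})\prod_{s=t}^{T-1}(1+E_{m_s}^{(8)})(1+\norm{Z_{s+1}})$ (inserting the harmless extra factors $(1+\norm{Z_{l+1}})\geq 1$ and $(1+E_{m_l}^{(8)})\geq 1$) and then summing over $s$, with the factor $T$ absorbed into $C$, gives the claim.

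The only genuine obstacle is this exponent bookkeeping: one must verify that no product of derivative factors ever forces a moment strictly above the eighth, since this $(\cdot)^{(8)}$ bound is precisely what propagates (via Lemma \ref{Q frechet deriv} and the $f_t^\lambda$ estimates) into the requirement $p\geq 8$ in Theorem \ref{1/n theoretical}; the individual estimates are otherwise routine applications of Assumption \ref{verifiable assumptions} and Lemmas \ref{norm x}, \ref{u derivs}, \ref{x state derivs}.
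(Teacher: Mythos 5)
Your proposal is correct and follows essentially the same route as the paper: differentiate the expression for $\partial_x\widehat{Q}_t$ once more in $x$, split into the cost-curvature terms carrying $(\partial_x X_s)^2$ and the flow-curvature terms carrying $\partial_x^2 X_s$, bound each factor via Assumption \ref{verifiable assumptions} and Lemmas \ref{norm x}, \ref{u derivs}, \ref{x state derivs}, and merge moments so nothing above the eighth appears before bounding uniformly in $s$ and summing. Your remark that boundedness of the second derivatives of the costs is used only implicitly also matches the paper's treatment, so there is no gap relative to its argument.
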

\begin{proof}
    Beginning directly, 
    \begin{align*}
        & \frac{\partial^2 \widehat{Q}_t}{\partial x^2}(x, m_t,\ldots, m_{T-1}, Z) \\ & \quad = \frac{\partial}{\partial x}\sum_{s\geq t}\big(\partial_x c_s^* + (\partial_u c_s^*)(\partial_x u_{m_s})\big)\frac{\partial}{\partial x}X_s^{t,x,\mathbf{m}}(Z) \\
        & \quad = \sum_{s\geq t}\Big\{\big(\partial_x^2 c_s^* + 2(\partial^2_{ux} c_s^*)( \partial_x u_{m_s}) + (\partial_u^2 c_s^*)(\partial_x u_{m_s})^2 + (\partial_u c_s^* )(\partial_x^2 u_{m_s})\big)\Big(\frac{\partial}{\partial x}X_s^{t,x,\mathbf{m}}(Z)\Big)^2 \\
        & \quad \quad + \big(\partial_x c_s^* + (\partial_u c_s^*)( \partial_x u_{m_s})\big)\frac{\partial^2}{\partial x^2}X_s^{t,x,\mathbf{m}}(Z)\Big\}.
    \end{align*}
    From Lemmas \ref{u derivs} and \ref{x state derivs} we see that 
    \begin{align*}
        & \big(\partial_x^2 c_s^* + 2(\partial^2_{ux} c_s^*)( \partial_x u_{m_s}) + (\partial_u^2 c_s^*)(\partial_x u_{m_s})^2 + (\partial_u c_s^*)( \partial_x^2 u_{m_s})\big)\Big(\frac{\partial}{\partial x}X_s^{t,x,\mathbf{m}}(Z)\Big)^2 \\
        & \quad \leq C(1+\norm{X_s^{t,x,\mathbf{m}}(Z)})(1+E_{m_s}^{(8)})\prod_{l=t}^{s-1}(1+E_{m_l}^{(4)}) \\
        & \quad \leq C(1+\norm{x})(1+E_{m_s}^{(8)})\prod_{l=t}^{s-1}(1+E_{m_l}^{(4)})(1+E_{m_l}^{(2)})(1+\norm{Z_{l+1}}),
    \end{align*}
    and 
    \begin{align*}
        & \big(\partial_x c_s^* + (\partial_u c_s^*)( \partial_x u_{m_s})\big)\frac{\partial^2}{\partial x^2}X_s^{t,x,\mathbf{m}}(Z) \\
        & \quad \leq C(1+\norm{X_s^{t,x,\mathbf{m}}(Z)})(1+E_{m_s}^{(4)})\prod_{l=t}^{s-1}(1+E_{m_l}^{(4)})(1+E_{m_l}^{(2)}) \\
        & \quad \leq C(1+\norm{x})(1+E_{m_s}^{(4)})\prod_{l=t}^{s-1}(1+E_{m_l}^{(8)})(1+\norm{Z_{l+1}}).
    \end{align*}
    Bounding uniformly over both terms and all $s \geq t$ gives
    \begin{align*}
        \frac{\partial^2 \widehat{Q}_t}{\partial x^2}(x, m_t, \ldots, m_{T-1}, Z) \leq C(1+\norm{x})\prod_{s=t}^{T-1}(1+E_{m_s}^{(8)})(1+\norm{Z_{s+1}}).
    \end{align*}
\end{proof}

    \section{Results for Mean-Field Neural Networks}\label{appB}

We here state auxiliary results, adapted slightly from \cite{sam_ge}. The proofs are almost identical, and so are omitted. Where we mention some $\nu$, we take $\nu\in\mathcal{P}_q(\mathcal{Z}^T)$, with $q$ as in Theorem \ref{minimisers}.
\begin{lemma}\label{set-valued map}(\cite[Lemma D.1]{sam_ge})
    For $t= 0, \ldots, T-1,$ define the set-valued map
    \begin{align*}
        B_t(\nu) := & \Big\{m_t\in\mathcal{P}_p(\Theta):\frac{\sigma^2}{2\beta^2}\mathrm{KL}(m_t||\gamma^{\sigma})\leq\mathbb{E}_{Z\sim\nu}[\widehat{Q}_t(X_t^{\mathrm{ref}}(Z),\gamma^{\sigma}, Z)]\\
        & \quad \mathrm{and}\ \int_{\Theta}\norm{\theta}^p m_t(\mathrm{d}\theta) \leq \mathbb{E}_{Z\sim\nu}[\widehat{Q}_t(X_t^{\mathrm{ref}}(Z),\widetilde{\gamma}^{\sigma}_p, Z)] + \int_{\Theta}\norm{\theta}^p \widetilde{\gamma}^{\sigma}_p(\theta)\mathrm{d}\theta \Big\}.
    \end{align*}
    Then, for all $t = 0, \ldots, T-1$, the relevant component of the Gibbs vector satisfies $\mathfrak{m}_t(\nu)\in B_t(\nu).$
\end{lemma}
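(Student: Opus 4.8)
The plan is to read off both inequalities defining $B_t(\nu)$ directly from the optimality of the Gibbs vector, using only the nonnegativity hypotheses of Assumption \ref{verifiable assumptions}(i). The starting point is that, by Theorem \ref{minimisers} applied with a general $\nu\in\mathcal{P}_q(\mathcal{Z}^T)$ in place of $\nu_n$, for each $t$ in the backwards recursion the measure $\mathfrak{m}_t(\nu)$ is the unique minimiser of the map \eqref{entropy_minimisation_problem} over $m_t\in\mathcal{P}_p(\Theta)$, with the later control measures fixed to $\mathfrak{m}_{t+1}(\nu),\ldots,\mathfrak{m}_{T-1}(\nu)$ (these are the measures implicitly carried inside $\widehat{Q}_t(\cdot,m_t,\cdot)$). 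Both desired bounds will come from testing this minimisation against an explicit admissible competitor and discarding the nonnegative terms that remain at the minimiser.

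For the relative-entropy bound, I would test against $\gamma^\sigma$. Since $\mathrm{KL}(\gamma^\sigma||\gamma^\sigma)=0$, admissibility of $\gamma^\sigma$ for \eqref{entropy_minimisation_problem} yields
\[ \mathbb{E}_{Z\sim\nu}\big[\widehat{Q}_t(X_t^{\mathrm{ref}}(Z),\mathfrak{m}_t(\nu),Z)\big] + \frac{\sigma^2}{2\beta^2}\mathrm{KL}(\mathfrak{m}_t(\nu)||\gamma^\sigma) \le \mathbb{E}_{Z\sim\nu}\big[\widehat{Q}_t(X_t^{\mathrm{ref}}(Z),\gamma^\sigma,Z)\big]; \]
dropping the nonnegative first term on the left is exactly the first condition in the definition of $B_t(\nu)$. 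For the $p$-th moment bound, I would first invoke the identity $\mathrm{KL}(m||\gamma^\sigma)=\mathrm{KL}(m||\tilde{\gamma}_p^\sigma)+E_m^{(p)}+\mathrm{const}$ on $\mathcal{P}_p(\Theta)$ (the constant being independent of $m$) to recognise $\mathfrak{m}_t(\nu)$ as a minimiser of the reformulated problem \eqref{tilde_entropy_minimisation_problem}, and then test that problem against $\tilde{\gamma}_p^\sigma$, whose relative-entropy term again vanishes. This gives
\[ \mathbb{E}_{Z\sim\nu}\big[\widehat{Q}_t(X_t^{\mathrm{ref}}(Z),\mathfrak{m}_t(\nu),Z)\big] + E_{\mathfrak{m}_t(\nu)}^{(p)} + \frac{\sigma^2}{2\beta^2}\mathrm{KL}(\mathfrak{m}_t(\nu)||\tilde{\gamma}_p^\sigma) \le \mathbb{E}_{Z\sim\nu}\big[\widehat{Q}_t(X_t^{\mathrm{ref}}(Z),\tilde{\gamma}_p^\sigma,Z)\big] + E_{\tilde{\gamma}_p^\sigma}^{(p)}, \]
and dropping the nonnegative first and third terms on the left leaves $E_{\mathfrak{m}_t(\nu)}^{(p)}\le \mathbb{E}_{Z\sim\nu}[\widehat{Q}_t(X_t^{\mathrm{ref}}(Z),\tilde{\gamma}_p^\sigma,Z)]+\int_\Theta\norm{\theta}^p\tilde{\gamma}_p^\sigma(\theta)\,\mathrm{d}\theta$, which is the second condition.

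The one place needing genuine care — and the main (if mild) obstacle — is verifying that the competitors are admissible, i.e.\ that they lie in $\mathcal{P}_p(\Theta)$ and yield a finite objective, so that the two suboptimality inequalities above are meaningful. Membership in $\mathcal{P}_p(\Theta)$ follows from Assumption \ref{verifiable assumptions}(vii): since $\Gamma(\theta)/\norm{\theta}^p\to\infty$, both $\gamma^\sigma$ and $\tilde{\gamma}_p^\sigma$ have all polynomial moments finite. Finiteness of $\mathbb{E}_{Z\sim\nu}[\widehat{Q}_t(X_t^{\mathrm{ref}}(Z),\gamma^\sigma,Z)]$ and $\mathbb{E}_{Z\sim\nu}[\widehat{Q}_t(X_t^{\mathrm{ref}}(Z),\tilde{\gamma}_p^\sigma,Z)]$ follows from the quadratic and linear growth conditions of Assumption \ref{verifiable assumptions}(ii,iv,vi), the (backwards) induction hypothesis that $\mathfrak{m}_s(\nu)\in\mathcal{P}_p(\Theta)$ for $s>t$, and $\nu\in\mathcal{P}_q(\mathcal{Z}^T)$ with $q\ge T$ — essentially the same computation as the bound on $\widehat{Q}_0(x_0,\tilde{\gamma}_p^\sigma,\ldots,\tilde{\gamma}_p^\sigma,Z)$ carried out in the proof of Lemma \ref{tilde gamma lemma}. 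With admissibility established the argument is immediate, so I would record these facts once and then state the two one-line inequalities above.
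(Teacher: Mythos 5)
Your proposal is correct and matches the intended argument: the paper omits this proof (deferring to \cite[Lemma D.1]{sam_ge}), but your admissibility-and-suboptimality reasoning — testing the time-$t$ problem \eqref{entropy_minimisation_problem} against $\gamma^\sigma$ and the reformulation \eqref{tilde_entropy_minimisation_problem} against $\tilde{\gamma}_p^\sigma$, then dropping the nonnegative $\widehat{Q}_t$ and KL terms — is exactly the device the paper itself uses in the proofs of Theorem \ref{minimisers} and Lemma \ref{tilde gamma lemma}. Your attention to admissibility (finite $p$-moments via Assumption \ref{verifiable assumptions}(vii) and finiteness of the tested objectives via the growth conditions and the backwards-induction hypothesis) covers the only nontrivial point, so nothing is missing.
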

\begin{lemma}(\cite[Lemma D.2]{sam_ge})\label{Hilbert-Schmidt}
    For $t = 0,\ldots, T-1$, the linear maps $\mathcal{C}^t_m : L^2(m,\Theta) \to L^2(m,\Theta)$ defined by
    \[ \mathcal{C}_m^t f(\theta) := \mathbb{E}_{\theta'\sim m}\Big[\int_{\mathcal{Z}^T}\frac{\delta^2 \widehat{Q}_t}{\delta m^2}(X_t^{\mathrm{ref}}(Z), m, Z;\theta, \theta')\nu(\mathrm{d}Z)f(\theta')\Big], \quad m \in B_t(\nu),\]
    are positive in the sense that $\langle f, \mathcal{C}^t_m f\rangle_{L^2(m,\Theta)} \geq 0$. 
    \par
    In particular, each $\mathcal{C}_m^t$ is a Hilbert-Schmidt operator with a discrete spectrum \[\sigma(\mathcal{C}_m^t) = \{\lambda_i^t\}_{i\geq 0} \subset [0,\infty).\]
\end{lemma}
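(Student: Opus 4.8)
The plan is to prove Lemma~\ref{Hilbert-Schmidt} in four steps: (i) self-adjointness of $\mathcal{C}_m^t$; (ii) nonnegativity of its quadratic form, which is the real content; (iii) the Hilbert–Schmidt bound; and (iv) the spectral conclusion, which is then automatic. Throughout, write $K(\theta,\theta') := \int_{\mathcal{Z}^T}\frac{\delta^2 \widehat{Q}_t}{\delta m^2}(X_t^{\mathrm{ref}}(Z),m,Z;\theta,\theta')\,\nu(\mathrm{d}Z)$ for the kernel, so that $\mathcal{C}_m^t f(\theta) = \mathbb{E}_{\theta'\sim m}[K(\theta,\theta')f(\theta')]$. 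For (i), recall that the second linear functional derivative is symmetric in its last two arguments (Definition~\ref{frechet definition} together with the standard Schwarz-type statement for flat derivatives), so $K$ is a symmetric kernel and $\mathcal{C}_m^t$ is formally self-adjoint on $L^2(m,\Theta)$.

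For (ii) I would first exploit the normalisation convention. Differentiating $\int_\Theta \frac{\delta \widehat{Q}_t}{\delta m}(x,m,Z;\theta)\,m(\mathrm{d}\theta)=0$ once more in $m$ gives $\int_\Theta \frac{\delta^2\widehat{Q}_t}{\delta m^2}(x,m,Z;\theta,\theta')\,m(\mathrm{d}\theta)=0$ for every $\theta'$, and hence $\int_\Theta K(\theta,\theta')\,m(\mathrm{d}\theta)=0$ (and, by symmetry, the same in the other variable). Writing $f=\tilde f+\mathbb{E}_{\theta\sim m}[f]$ with $\tilde f:=f-\mathbb{E}_{\theta\sim m}[f]$, all terms containing the constant drop out, so $\langle f,\mathcal{C}_m^t f\rangle_{L^2(m,\Theta)}=\int_{\mathcal{Z}^T}\iint_{\Theta^2}\frac{\delta^2\widehat{Q}_t}{\delta m^2}(X_t^{\mathrm{ref}}(Z),m,Z;\theta,\theta')\,\tilde f(\theta)\tilde f(\theta')\,m(\mathrm{d}\theta)m(\mathrm{d}\theta')\,\nu(\mathrm{d}Z)$. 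Now $\mu:=\tilde f\,m$ is a signed measure of total mass zero; by Assumption~\ref{verifiable assumptions}(i) the map $m\mapsto\widehat{Q}_t(x,m,Z)$ is (linearly) convex for each fixed $x$ and $Z$, and the flat second-order Taylor expansion of a convex functional gives $\iint_{\Theta^2}\frac{\delta^2\widehat{Q}_t}{\delta m^2}(x,m,Z;\theta,\theta')\,\mu(\mathrm{d}\theta)\mu(\mathrm{d}\theta')\ge 0$ for every mass-zero $\mu$ with finite moments, in particular $\mu=\tilde f\,m$ (reducing first to bounded $\tilde f$ so that $m+\varepsilon\mu\in\mathcal{P}_2(\Theta)$, then passing to the limit using the growth bound of step (iii)). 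Integrating over $Z\sim\nu$ yields $\langle f,\mathcal{C}_m^t f\rangle_{L^2(m,\Theta)}\ge 0$.

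For (iii), I would establish a polynomial growth bound on the second flat derivative, in the same spirit as Lemmas~\ref{Q frechet deriv} and~\ref{Q double deriv x}: differentiating the explicit expression for $\frac{\delta\widehat{Q}_t}{\delta m}$ obtained in the proof of Theorem~\ref{minimisers} once more in $m$, and invoking Assumption~\ref{verifiable assumptions} together with Lemmas~\ref{norm x},~\ref{u derivs} and~\ref{frechet_state}, gives a bound of the schematic form $\abs{\tfrac{\delta^2\widehat{Q}_t}{\delta m^2}(x,m,Z;\theta,\theta')}\le C(1+\norm{x}^2)(1+\norm{\theta}^2+E_m^{(2)})(1+\norm{\theta'}^2+E_m^{(2)})\prod_{s=t}^{T-1}(1+\norm{Z_{s+1}}^2)(1+E_{\mathfrak{m}_s(\nu)}^{(8)})$. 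Integrating over $Z\sim\nu$ is finite since $\nu\in\mathcal{P}_q(\mathcal{Z}^T)$ with $q\ge T$ (Hölder), and $E_m^{(2)}$ is controlled for $m\in B_t(\nu)$ by Lemma~\ref{set-valued map} and Lemma~\ref{tilde gamma lemma}, so $\abs{K(\theta,\theta')}\le C(1+\norm{\theta}^2)(1+\norm{\theta'}^2)$. Squaring and integrating, $\iint_{\Theta^2}K(\theta,\theta')^2\,m(\mathrm{d}\theta)m(\mathrm{d}\theta')\le C(1+E_m^{(4)})^2<\infty$ because $m\in\mathcal{P}_p(\Theta)$ with $p\ge 4$; a Cauchy–Schwarz estimate then also shows $\mathcal{C}_m^t$ maps $L^2(m,\Theta)$ into itself. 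Hence $\mathcal{C}_m^t$ is an integral operator with square-integrable kernel, i.e.\ Hilbert–Schmidt, and in particular compact. For (iv), a compact self-adjoint operator has a purely discrete spectrum accumulating only at $0$; combined with the nonnegativity from (ii), all eigenvalues are nonnegative, giving $\sigma(\mathcal{C}_m^t)=\{\lambda_i^t\}_{i\ge 0}\subset[0,\infty)$.

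The main obstacle I anticipate is step (iii): writing out and bounding $\frac{\delta^2\widehat{Q}_t}{\delta m^2}$ requires differentiating a second time through the composition of the running and terminal costs, the transition maps $h_t$, and the mean-field control $u_m$ (together with the chain of state-process derivatives $\frac{\delta}{\delta m}X_s^{t,m,\cdot}$ and their second variations), while tracking all the moment dependencies — exactly the bookkeeping carried out in Appendix~\ref{appA}, which is why the cited reference treats it only by analogy. A secondary technical point is making the convexity reduction in step (ii) rigorous for general $f\in L^2(m,\Theta)$ rather than bounded $f$, which is handled by a routine truncation/density argument relying on the growth bound from (iii).
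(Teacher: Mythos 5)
The paper does not actually spell out a proof of this lemma: Appendix C opens by saying the results are adapted from \cite[Lemma D.2]{sam_ge} with ``almost identical'' proofs omitted, and your argument reconstructs precisely that standard route — symmetry plus the normalisation convention of Definition \ref{frechet definition} reduce the quadratic form to mean-zero test functions, convexity of $m\mapsto\widehat{Q}_t$ from Assumption \ref{verifiable assumptions}(i) gives nonnegativity of the second flat derivative tested against zero-mass perturbations $(1+\varepsilon\tilde f)m$, and the quadratic-in-$\theta$ growth of the kernel together with the moment bounds defining $B_t(\nu)$ gives a square-integrable kernel, hence a compact self-adjoint (Hilbert--Schmidt) operator whose spectrum is nonnegative and discrete. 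This is correct and essentially the intended proof; the only caveat is minor bookkeeping — integrating your kernel bound over $Z\sim\nu$ involves squared factors $(1+\norm{Z_{s+1}}^2)$ and so really uses moments of $Z$ of order roughly $2T$ rather than the $q\ge T$ you quote, a point the paper itself glosses over by deferring to the reference (and which is harmless under the stronger moment assumptions in force for Theorem \ref{1/n theoretical}).
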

\begin{lemma}(\cite[Lemma D.3]{sam_ge})
    We define 
    \[ S_t(\nu,\theta):=\int_{\mathcal{Z}^{T}}\frac{\delta \widehat{Q}_t}{\delta m}(X_t^{\mathrm{ref}}(Z),\mathfrak{m}_t(\nu),Z;\theta)\nu(\mathrm{d}Z).\]
    Under Assumption \ref{verifiable assumptions}, each $S_t$ is differentiable with respect to $\nu$. In particular, explicitly
    \begin{align*}
        \frac{\delta S_t}{\delta \nu}(\nu,\theta; Z) & = \frac{\delta \widehat{Q}_t}{\delta m}(X_t^{\mathrm{ref}}(Z), \mathfrak{m}_t(\nu),Z;\theta) - \int_{\mathcal{Z}^T} \frac{\delta \widehat{Q}_t}{\delta m}(X_t^{\mathrm{ref}}(Z'), \mathfrak{m}_t(\nu),Z';\theta)\nu(\mathrm{d}Z') \\
        & \quad - \frac{2\beta^2}{\sigma^2}\mathbb{C}\mathrm{ov}_{\theta'\sim \mathfrak{m}_t(\nu)}\Big[\int_{\mathcal{Z}^{T}} \frac{\delta^2 \widehat{Q}_t}{\delta m^2}(X_t^{\mathrm{ref}}(Z'), \mathfrak{m}_t(\nu),Z';\theta,\theta')\nu(\mathrm{d}Z'), \frac{\delta S_t}{\delta \nu}(\nu, \theta'; Z)\Big].
    \end{align*}
    Even further, we have the inequality 
    \begin{align*}
    & \int_{\Theta}\Big(\frac{\delta S_t}{\delta \nu}(\nu,\theta; Z\Big)^2 \mathfrak{m}_t(\nu)(\mathrm{d}\theta) \\
    & \quad \leq \int_{\Theta}\Big(\frac{\delta \widehat{Q}_t}{\delta m}(X_t^{\mathrm{ref}}(Z), \mathfrak{m}_t(\nu),Z;\theta) - \int_{\mathcal{Z}^T} \frac{\delta \widehat{Q}_t}{\delta m}(X_t^{\mathrm{ref}}(Z'), \mathfrak{m}_t(\nu),Z';\theta)\nu(\mathrm{d}Z')\Big)^2 \mathfrak{m}_t(\nu)(\mathrm{d}\theta).
    \end{align*}
\end{lemma}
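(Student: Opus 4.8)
The plan is to differentiate the fixed-point characterisation of $\mathfrak{m}_t(\nu)$ from Theorem \ref{minimisers} and read off a linear equation for $\frac{\delta S_t}{\delta\nu}$ whose solvability — and the accompanying bound — follow from the positivity of the Hilbert--Schmidt operator $\mathcal{C}^t_m$ of Lemma \ref{Hilbert-Schmidt}. Throughout, the future entries $\mathfrak{m}_{t+1}(\nu),\dots,\mathfrak{m}_{T-1}(\nu)$ of the Gibbs vector are regarded as fixed parameters, their own $\nu$-dependence being accounted for separately in the decomposition of Theorem \ref{generalisation error}.

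\emph{Step 1 (differentiate the fixed point).} By Theorem \ref{minimisers}, $\mathfrak{m}_t(\nu)$ has density proportional to $\exp\{-\tfrac{2\beta^2}{\sigma^2}S_t(\nu,\theta)-\tfrac1{\sigma^2}\Gamma(\theta)\}$. Taking logarithms and applying $\tfrac{\delta}{\delta\nu}(\,\cdot\,;Z)$ gives $\frac{\delta\mathfrak{m}_t}{\delta\nu}(\nu;Z)(\mathrm{d}\theta)=\big(-\tfrac{2\beta^2}{\sigma^2}\tfrac{\delta S_t}{\delta\nu}(\nu,\theta;Z)+c(Z)\big)\mathfrak{m}_t(\nu)(\mathrm{d}\theta)$, where $c(Z)$ is a $\theta$-independent constant coming from the variation of the normalising constant $F_{\beta,\sigma,t}$. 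Separately, differentiating $S_t(\nu,\theta)=\int_{\mathcal{Z}^T}\frac{\delta\widehat{Q}_t}{\delta m}(X_t^{\mathrm{ref}}(Z'),\mathfrak{m}_t(\nu),Z';\theta)\,\nu(\mathrm{d}Z')$ by the chain and product rules yields the centred explicit term $E(\theta):=\frac{\delta\widehat{Q}_t}{\delta m}(X_t^{\mathrm{ref}}(Z),\mathfrak{m}_t(\nu),Z;\theta)-\int_{\mathcal{Z}^T}\frac{\delta\widehat{Q}_t}{\delta m}(X_t^{\mathrm{ref}}(Z'),\mathfrak{m}_t(\nu),Z';\theta)\,\nu(\mathrm{d}Z')$ plus the implicit term $\int_{\mathcal{Z}^T}\!\int_\Theta\frac{\delta^2\widehat{Q}_t}{\delta m^2}(X_t^{\mathrm{ref}}(Z'),\mathfrak{m}_t(\nu),Z';\theta,\theta')\,\frac{\delta\mathfrak{m}_t}{\delta\nu}(\nu;Z)(\mathrm{d}\theta')\,\nu(\mathrm{d}Z')$.

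\emph{Step 2 (remove the constant, recognise the covariance).} Inserting the Step 1 expression for $\frac{\delta\mathfrak{m}_t}{\delta\nu}$ into the implicit term, the $c(Z)$-contribution is $c(Z)\int_{\mathcal{Z}^T}\!\int_\Theta\frac{\delta^2\widehat{Q}_t}{\delta m^2}(\cdots;\theta,\theta')\mathfrak{m}_t(\nu)(\mathrm{d}\theta')\nu(\mathrm{d}Z')=0$ by the normalisation $\int_\Theta\frac{\delta^2\widehat{Q}_t}{\delta m^2}(\cdots;\theta,\theta')\mathfrak{m}_t(\nu)(\mathrm{d}\theta')=0$ of Definition \ref{frechet definition}; the same normalisation turns the surviving integral into the stated covariance under $\theta'\sim\mathfrak{m}_t(\nu)$. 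This produces precisely the claimed identity for $\frac{\delta S_t}{\delta\nu}$ and, writing $\mathcal{C}:=\mathcal{C}^t_{\mathfrak{m}_t(\nu)}$ for the operator of Lemma \ref{Hilbert-Schmidt}, is equivalent to the linear equation $\big(\mathrm{Id}+\tfrac{2\beta^2}{\sigma^2}\mathcal{C}\big)\tfrac{\delta S_t}{\delta\nu}(\nu,\cdot;Z)=E$ in $L^2(\mathfrak{m}_t(\nu),\Theta)$.

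\emph{Step 3 (well-posedness, differentiability, bound) and main obstacle.} Since $\mathcal{C}$ is positive semi-definite and Hilbert--Schmidt (Lemma \ref{Hilbert-Schmidt}), $\mathrm{Id}+\tfrac{2\beta^2}{\sigma^2}\mathcal{C}$ is boundedly invertible with operator norm at most $1$, giving a unique solution in $L^2(\mathfrak{m}_t(\nu),\Theta)$; membership of $E$ and of the solution in $L^2(\mathfrak{m}_t(\nu))$ is guaranteed by the polynomial $\theta$-growth of $\frac{\delta\widehat{Q}_t}{\delta m}$ and $\frac{\delta^2\widehat{Q}_t}{\delta m^2}$ together with the uniform moment bounds of Lemma \ref{tilde gamma lemma} (equivalently the set-valued bound of Lemma \ref{set-valued map}). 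The inequality then follows by pairing the Step 2 equation with $\tfrac{\delta S_t}{\delta\nu}(\nu,\cdot;Z)$ in $L^2(\mathfrak{m}_t(\nu),\Theta)$ and discarding the nonnegative term $\tfrac{2\beta^2}{\sigma^2}\langle\tfrac{\delta S_t}{\delta\nu},\mathcal{C}\tfrac{\delta S_t}{\delta\nu}\rangle\ge 0$: this gives $\|\tfrac{\delta S_t}{\delta\nu}\|^2\le\langle\tfrac{\delta S_t}{\delta\nu},E\rangle\le\|\tfrac{\delta S_t}{\delta\nu}\|\,\|E\|$, hence $\|\tfrac{\delta S_t}{\delta\nu}\|_{L^2(\mathfrak{m}_t(\nu))}^2\le\|E\|_{L^2(\mathfrak{m}_t(\nu))}^2$, which is the displayed bound. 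The genuinely delicate point — the main obstacle — is justifying that this $L^2$-solution really is the linear functional derivative of $S_t$ in the sense of Definition \ref{frechet definition}, i.e. the rigorous chain rule, the interchange of $\tfrac{\delta}{\delta\nu}$ with the $Z'$- and $\theta'$-integrals, and the differentiability of $F_{\beta,\sigma,t}$ in $\nu$; this is an implicit-function-theorem argument in a Banach space of functions of controlled growth, in which Assumption \ref{verifiable assumptions}(vii) controls the normalising constant, and it follows \cite{sam_ge} almost verbatim, the only new bookkeeping being the harmless polynomial-in-$Z$ prefactors introduced by the reference state $X_t^{\mathrm{ref}}(Z)$.
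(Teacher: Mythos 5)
Your proposal is correct and takes essentially the same route as the paper intends: the paper omits the proof, citing \cite[Lemma D.3]{sam_ge}, and the argument there is precisely yours — differentiate the Gibbs fixed-point characterisation of $\mathfrak{m}_t(\nu)$, kill the normalising-constant term and turn the implicit term into a covariance via the normalisation $\int_\Theta\frac{\delta^2\widehat{Q}_t}{\delta m^2}(\cdot;\theta,\theta')\,\mathfrak{m}_t(\nu)(\mathrm{d}\theta')=0$, identify the result as $(\mathrm{Id}+\tfrac{2\beta^2}{\sigma^2}\mathcal{C}^t_{\mathfrak{m}_t(\nu)})\tfrac{\delta S_t}{\delta\nu}=E$, and invoke positivity of the operator (Lemma \ref{Hilbert-Schmidt}) to invert with norm at most one and obtain the $L^2$ bound. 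You also correctly flag the one genuine subtlety — that the future entries $\mathfrak{m}_{t+1:T-1}(\nu)$ are held fixed when differentiating $S_t$, their $\nu$-dependence being handled separately in the decomposition of Theorem \ref{generalisation error} and in Lemma \ref{cov} — which is exactly the convention the paper's overall argument relies on.
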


\begin{lemma}(\cite[Lemma D.4]{sam_ge})\label{cov}
    Under Assumption \ref{verifiable assumptions}, the densities of the components of the Gibbs vector have derivatives
    \[ \frac{\delta \mathfrak{m}_t}{\delta \nu}(\nu, \theta; Z) = - \frac{2\beta^2}{\sigma^2}\mathfrak{m}_t(\nu)(\theta)\Big(\frac{\delta S_t}{\delta \nu}(\nu,\theta; Z) - \int_{\Theta}\mathfrak{m}_t(\nu)(\theta')\frac{\delta S_t}{\delta \nu}(\nu, \theta'; Z)\mathrm{d}\theta'\Big).\]
    In particular, for any $f \in L^2(\mathrm{d}\theta)$, we have the inner product representation
    \[ \int_{\Theta} f(\theta)\frac{\delta \mathfrak{m}_t}{\delta \nu}(\nu,\theta;Z)\mathrm{d}\theta = - \frac{2\beta^2}{\sigma^2}\mathbb{C}\mathrm{ov}_{\theta\sim\mathfrak{m}_t(\nu)}\Big[f(\theta), \frac{\delta S_t}{\delta \nu}(\nu,\theta;Z)\Big].\]
\end{lemma}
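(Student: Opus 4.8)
The plan is to differentiate the Gibbs fixed-point characterisation of Theorem~\ref{minimisers} with respect to $\nu$, then to pin down the normalisation term using the fact that $\mathfrak{m}_t(\nu)$ is a probability density; the covariance representation is then immediate.

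First, at the fixed point $m=\mathfrak{m}_t(\nu)$ the equation \eqref{Szpruch first order condition} reads $\mathfrak{m}_t(\nu)(\theta) = F_{\beta,\sigma,t}(\nu)^{-1}\exp\{-\tfrac{2\beta^2}{\sigma^2}S_t(\nu,\theta) - \tfrac{1}{\sigma^2}\Gamma(\theta)\}$, where $S_t$ is the quantity introduced in the preceding lemma and $F_{\beta,\sigma,t}(\nu)=\int_\Theta \exp\{-\tfrac{2\beta^2}{\sigma^2}S_t(\nu,\theta')-\tfrac{1}{\sigma^2}\Gamma(\theta')\}\,\mathrm{d}\theta'$. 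Taking logarithms and differentiating in $\nu$ — which is permissible since the preceding lemma already provides differentiability of $\nu\mapsto S_t(\nu,\theta)$, and differentiation of $F_{\beta,\sigma,t}$ under the integral sign is justified by the polynomial growth bound on $\tfrac{\delta\widehat{Q}_t}{\delta m}$ obtained in the proof of Theorem~\ref{minimisers} together with Assumption~\ref{verifiable assumptions}(vii) — the chain rule gives
\[ \frac{1}{\mathfrak{m}_t(\nu)(\theta)}\frac{\delta\mathfrak{m}_t}{\delta\nu}(\nu,\theta;Z) = -\frac{2\beta^2}{\sigma^2}\frac{\delta S_t}{\delta\nu}(\nu,\theta;Z) - \frac{\delta\log F_{\beta,\sigma,t}}{\delta\nu}(\nu;Z), \]
the $\Gamma$-term disappearing as it is independent of $\nu$. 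Next, since $\int_\Theta\mathfrak{m}_t(\nu)(\theta)\,\mathrm{d}\theta=1$ for every admissible $\nu$, we have $\int_\Theta\frac{\delta\mathfrak{m}_t}{\delta\nu}(\nu,\theta;Z)\,\mathrm{d}\theta=0$; multiplying the last display by $\mathfrak{m}_t(\nu)(\theta)$ and integrating over $\Theta$ therefore identifies $\frac{\delta\log F_{\beta,\sigma,t}}{\delta\nu}(\nu;Z) = -\frac{2\beta^2}{\sigma^2}\int_\Theta\frac{\delta S_t}{\delta\nu}(\nu,\theta';Z)\mathfrak{m}_t(\nu)(\theta')\,\mathrm{d}\theta'$. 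Substituting this back and multiplying through by $\mathfrak{m}_t(\nu)(\theta)$ yields exactly the claimed formula for $\frac{\delta\mathfrak{m}_t}{\delta\nu}$.

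The inner-product representation then follows by multiplying that formula by an arbitrary $f\in L^2(\mathrm{d}\theta)$ and integrating: the right-hand side equals $-\frac{2\beta^2}{\sigma^2}\big(\mathbb{E}_{\theta\sim\mathfrak{m}_t(\nu)}[f(\theta)\tfrac{\delta S_t}{\delta\nu}(\nu,\theta;Z)] - \mathbb{E}_{\theta\sim\mathfrak{m}_t(\nu)}[f(\theta)]\,\mathbb{E}_{\theta'\sim\mathfrak{m}_t(\nu)}[\tfrac{\delta S_t}{\delta\nu}(\nu,\theta';Z)]\big)$, which is $-\frac{2\beta^2}{\sigma^2}\mathbb{C}\mathrm{ov}_{\theta\sim\mathfrak{m}_t(\nu)}[f(\theta),\tfrac{\delta S_t}{\delta\nu}(\nu,\theta;Z)]$; the integrals converge because $\mathfrak{m}_t(\nu)$ is a bounded density lying in $\mathcal{P}_p(\Theta)$ while $\tfrac{\delta S_t}{\delta\nu}(\nu,\cdot;Z)$ grows at most polynomially in $\theta$. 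The only delicate point — and the main obstacle — is establishing \emph{a priori} that $\nu\mapsto\mathfrak{m}_t(\nu)$ is differentiable with a derivative satisfying the regularity demanded by Definition~\ref{frechet definition}; this is inherited from the implicit-function-type argument underlying the preceding lemma, which in turn relies on the positivity of the Hilbert--Schmidt operators $\mathcal{C}_m^t$ from Lemma~\ref{Hilbert-Schmidt} ensuring invertibility of the linearised fixed-point map, so I would invoke that result rather than reprove it.
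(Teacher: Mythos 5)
Your proposal is correct and is essentially the argument this paper defers to (it omits the proof, citing \cite[Lemma D.4]{sam_ge}): one differentiates the Gibbs fixed-point representation \eqref{Szpruch first order condition} evaluated at $m=\mathfrak{m}_t(\nu)$, identifies the derivative of the normalising constant from $\int_\Theta \frac{\delta\mathfrak{m}_t}{\delta\nu}(\nu,\theta;Z)\,\mathrm{d}\theta=0$, and reads off the covariance identity by pairing with $f$, with the underlying differentiability of $\nu\mapsto S_t$ (hence of $\mathfrak{m}_t$) supplied by the preceding lemmas exactly as you invoke them. The only cosmetic remark is that integrability in the covariance step is best justified by the Gibbs-form decay of $\mathfrak{m}_t(\nu)$ (Assumption \ref{verifiable assumptions}(vii)) together with the polynomial bound on $\frac{\delta S_t}{\delta\nu}$, rather than membership in $\mathcal{P}_p(\Theta)$ alone.
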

\begin{lemma}\label{S_t expectation inequality}
    For general $t, m$, we have 
    \begin{align*}& \Bigg(\int_\Theta \Big(\frac{\delta S_t}{\delta \nu}\big(\nu, \theta; Z\big)\Big|^{Z=Z^{(1)}}_{Z=\widetilde{Z}^{(1)}}\Big)^2 \mathfrak{m}_t(\nu; \mathrm{d}\theta)\Bigg)^{\frac{1}{2}} \\ & \quad \leq C (1+\norm{x_0}^2) \Big(P(Z^{(1)})^2 + P(\widetilde{Z}^{(1)})^2\Big)(1+E_{\mathfrak{m}_t(\nu)}^{(2)})\prod_{s=t}^{T-1}(1+E_{\mathfrak{m}_s(\nu)}^{(4)})
    \end{align*}
\end{lemma}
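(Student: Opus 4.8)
The plan is to reduce the bound to an estimate on the linear functional derivative $\delta\widehat{Q}_t/\delta m$ alone, exploiting the structure of \cite[Lemma D.3]{sam_ge}, and then to recycle the pointwise estimate already obtained in the proof of Theorem \ref{minimisers}. First I would use the normalisation $\int_\Theta \frac{\delta^2\widehat{Q}_t}{\delta m^2}(\,\cdot\,;\theta,\theta')\mathfrak{m}_t(\nu)(\mathrm{d}\theta')=0$ to recognise the covariance appearing in the formula of \cite[Lemma D.3]{sam_ge} as the operator $\mathcal{C}^t_{\mathfrak{m}_t(\nu)}$ of Lemma \ref{Hilbert-Schmidt} applied to $\delta S_t/\delta\nu$; that identity then reads, in $L^2(\mathfrak{m}_t(\nu))$, $\big(\mathrm{Id}+\tfrac{2\beta^2}{\sigma^2}\mathcal{C}^t_{\mathfrak{m}_t(\nu)}\big)\frac{\delta S_t}{\delta\nu}(\nu,\cdot\,;Z)=g_0(\cdot\,;Z)$, where $g_0(\theta;Z):=\frac{\delta\widehat{Q}_t}{\delta m}(X_t^{\mathrm{ref}}(Z),\mathfrak{m}_t(\nu),Z;\theta)-\int_{\mathcal{Z}^T}\frac{\delta\widehat{Q}_t}{\delta m}(X_t^{\mathrm{ref}}(Z'),\mathfrak{m}_t(\nu),Z';\theta)\nu(\mathrm{d}Z')$. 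Since $\mathcal{C}^t_{\mathfrak{m}_t(\nu)}\succeq 0$ the inverse operator has norm at most $1$; because $Z\mapsto g_0(\cdot;Z)$ is linear and its $\nu$-averaged summand does not depend on $Z$, subtracting this identity at $Z=Z^{(1)}$ and at $Z=\widetilde{Z}^{(1)}$ gives
\[ \Big(\int_\Theta\Big(\tfrac{\delta S_t}{\delta\nu}(\nu,\theta;Z)\big|^{Z=Z^{(1)}}_{Z=\widetilde{Z}^{(1)}}\Big)^2\mathfrak{m}_t(\nu;\mathrm{d}\theta)\Big)^{\frac12}\leq\Big(\int_\Theta\Big(\tfrac{\delta\widehat{Q}_t}{\delta m}\big(X_t^{\mathrm{ref}}(Z),\mathfrak{m}_t(\nu),Z;\theta\big)\big|^{Z=Z^{(1)}}_{Z=\widetilde{Z}^{(1)}}\Big)^2\mathfrak{m}_t(\nu;\mathrm{d}\theta)\Big)^{\frac12}. \]

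Next I would apply the triangle inequality in $L^2(\mathfrak{m}_t(\nu))$ to split the right-hand side into the two single-path terms with $Z\in\{Z^{(1)},\widetilde{Z}^{(1)}\}$ and bound each separately. For this I would invoke the pointwise estimate established in the proof of Theorem \ref{minimisers}, namely $\big|\tfrac{\delta\widehat{Q}_t}{\delta m}(X_t^{\mathrm{ref}}(Z),\mathfrak{m}_t(\nu),Z;\theta)\big|\leq C(1+\norm{X_t^{\mathrm{ref}}(Z)})(1+E^{(2)}_{\mathfrak{m}_t(\nu)})(1+\norm{\theta}^2+E^{(2)}_{\mathfrak{m}_t(\nu)})\prod_{s=t+1}^{T-1}(1+E^{(2)}_{\mathfrak{m}_s(\nu)})$, use Lemma \ref{norm x} to replace $1+\norm{X_t^{\mathrm{ref}}(Z)}$ by $C(1+\norm{x_0})P(Z)$, square, integrate over $\theta\sim\mathfrak{m}_t(\nu)$, and use $\int_\Theta(1+\norm{\theta}^2+E^{(2)}_{\mathfrak{m}_t(\nu)})^2\mathfrak{m}_t(\nu;\mathrm{d}\theta)\leq C(1+E^{(4)}_{\mathfrak{m}_t(\nu)})$ together with the Jensen bound $(E^{(2)}_m)^2\leq E^{(4)}_m$. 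Taking square roots bounds each single-path term by $C(1+\norm{x_0})P(Z)(1+E^{(2)}_{\mathfrak{m}_t(\nu)})(1+E^{(4)}_{\mathfrak{m}_t(\nu)})^{1/2}\prod_{s=t+1}^{T-1}(1+E^{(2)}_{\mathfrak{m}_s(\nu)})$.

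Finally I would clean up the constants: summing the two contributions and using $P\geq1$ turns $P(Z^{(1)})+P(\widetilde{Z}^{(1)})$ into $P(Z^{(1)})^2+P(\widetilde{Z}^{(1)})^2$; $1+\norm{x_0}\leq C(1+\norm{x_0}^2)$; and $(1+E^{(4)}_{\mathfrak{m}_t(\nu)})^{1/2}$ together with the factors $1+E^{(2)}_{\mathfrak{m}_s(\nu)}$ for $s>t$ are absorbed into $\prod_{s=t}^{T-1}(1+E^{(4)}_{\mathfrak{m}_s(\nu)})$, using $1+a^{1/2}\leq C(1+a)$ and $1+E^{(2)}_m\leq C(1+E^{(4)}_m)$ (again Jensen). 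This recovers the asserted inequality. The only genuinely non-routine step is the first one — turning \cite[Lemma D.3]{sam_ge} into its ``$Z^{(1)}$ minus $\widetilde{Z}^{(1)}$'' form — which is immediate once the defining relation for $\delta S_t/\delta\nu$ is seen to be a linear equation whose positive operator does not depend on $Z$; everything after that is moment bookkeeping, whose only pitfall is to keep the $\theta$-growth at the quadratic level $(1+\norm{\theta}^2+E^{(2)}_{\mathfrak{m}_t(\nu)})$ rather than prematurely replacing it by $(1+\norm{\theta}^2+E^{(2)}_{\mathfrak{m}_t(\nu)})^2$, which would force eighth moments of $\mathfrak{m}_t(\nu)$ instead of fourth.
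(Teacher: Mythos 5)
Your proposal is correct and takes essentially the same route as the paper's proof: the paper likewise writes $\frac{\delta S_t}{\delta \nu}(\nu,\cdot;Z)\big|^{Z=Z^{(1)}}_{Z=\widetilde{Z}^{(1)}}$ as $\big(\mathrm{id}+\tfrac{2\beta^2}{\sigma^2}\mathcal{C}^t_{\mathfrak{m}_t(\nu)}\big)^{-1}$ applied to the corresponding difference of $\tfrac{\delta \widehat{Q}_t}{\delta m_t}$ (using positivity of $\mathcal{C}^t_{\mathfrak{m}_t(\nu)}$ so that the operator norm of the inverse is at most one), splits the difference into the two single-path terms, bounds $\tfrac{\delta \widehat{Q}_t}{\delta m_t}$ pointwise via the same decomposition into $\partial_u c_t^*$ and the chain-rule terms with Lemmas \ref{norm x} and \ref{frechet_state}, and finishes with the identical moment bookkeeping. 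The only cosmetic difference is that you import the estimate from the proof of Theorem \ref{minimisers}, which is stated with a factor $(1+\norm{X_t^{\mathrm{ref}}(Z)})$, whereas the careful computation (as redone inside the paper's proof of this lemma) produces $(1+\norm{X_t^{\mathrm{ref}}(Z)}^2)$ and hence $(1+\norm{x_0}^2)P(Z)^2$ directly; since you in any case upgrade $P(Z)$ to $P(Z)^2$ and $1+\norm{x_0}$ to $1+\norm{x_0}^2$ at the end, either version of the intermediate bound lands on the stated inequality.
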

\begin{proof}
    As demonstrated in Lemma \ref{Hilbert-Schmidt}, we may write 
    \begin{align*}
        \frac{\delta S_t}{\delta \nu}\big(\nu, \theta; Z\big)\Big|^{Z=Z^{(1)}}_{Z=\widetilde{Z}^{(1)}} & = \Big(\mathrm{id}+\frac{2\beta^2}{\sigma^2}\mathcal{C}^t_{\mathfrak{m}_t(\nu)}\Big)^{-1}\frac{\delta \widehat{Q}_t}{\delta m_t}\big(X_t^{\mathrm{ref}}(Z), \mathfrak{m}_{t:T-1}(\nu), Z; \theta\big)\Big|_{Z=\widetilde{Z}^{(1)}}^{Z=Z^{(1)}}, 
    \end{align*}
    so then 
    \begin{align*}
    \int_\Theta \Big(\frac{\delta S_t}{\delta \nu}\big(\nu, \theta; Z\big)\Big|^{Z=Z^{(1)}}_{Z=\widetilde{Z}^{(1)}}\Big)^2\mathfrak{m}_t(\nu; \mathrm{d}\theta) & \leq 2\int_\Theta \Bigg(\frac{\delta \widehat{Q}_t}{\delta m_t}\big(X_t^{\mathrm{ref}}(Z^{(1)}), \mathfrak{m}_{t:T-1}(\nu),Z^{(1)};\theta\big)^2 \\
    & \quad \quad + \frac{\delta \widehat{Q}_t}{\delta m_t}\big(X_t^{\mathrm{ref}}(\widetilde{Z}^{(1)}), \mathfrak{m}_{t:T-1}(\nu),\widetilde{Z}^{(1)};\theta\big)^2\Bigg)\mathfrak{m}_t(\nu; \mathrm{d}\theta).
    \end{align*}
    Denoting $\mathbf{m} = (m_s)_{s=0}^{T-1}$, we now bound
    \begin{align*}
        & \frac{\delta \widehat{Q}_t}{\delta m_t}\big(X_t^{\mathrm{ref}}(Z), m_{t:T-1},Z;\theta\big) \\ & = \frac{\delta}{\delta m_t}\sum_{s\geq t} c_s^*(X_s^{t, \mathbf{m}}(Z), m_s) \\
        & = (\partial_u c_t^*) \big(\phi(X_t^{\mathrm{ref}}(Z),\theta)-\mathbb{E}_{\theta\sim m_t}[\phi(X_t^{\mathrm{ref}}(Z), \theta)]\big) \\
        & \quad + \sum_{s=t+1}^{T-1}\big(\partial_x c_s^* + \partial_u c_s^* \partial_x u_{m_s} \big)\frac{\delta}{\delta m_t}X_s^{t,\mathbf{m}}(Z) \\
        & \leq C(1+\norm{X_t^{\mathrm{ref}}(Z)}^2)(1+E_{m_t}^{(2)})(1+\norm{\theta}^2 + E_{m_t}^{(2)}) \\
        & \quad + C\sum_{s=t+1}^{T-1}(1+\norm{X_s^{t,\mathbf{m}}(Z)})(1+E_{m_s}^{(4)})(1+\norm{X_t^{\mathrm{ref}}(Z)})(1+\norm{\theta}^2 + E_{m_t}^{(2)})\prod_{l=t+1}^{s-1} (1+E_{m_l}^{(2)}) \\
        & \leq C(1+\norm{x_0}^2)P(Z)^2(1+E_{m_t}^{(2)})(1+\norm{\theta}^2+E_{m_t}^{(2)})\prod_{s= t+1}^{T-1}(1+E_{m_s}^{(4)}). \\
    \end{align*}
    Substituting this into the above, taking the square root and simplifying gives
    \begin{align*}
    & \Bigg(\int_\Theta \Big(\frac{\delta S_t}{\delta \nu}\big(\nu, \theta; Z\big)\Big|^{Z=Z^{(1)}}_{Z=\widetilde{Z}^{(1)}}\Big)^2\mathfrak{m}_t(\nu; \mathrm{d}\theta)\Bigg)^{\frac{1}{2}} \\
    & \leq C (1+\norm{x_0}^2)\Big(P(Z^{(1)})^2 + P(\widetilde{Z}^{(1)})^2\Big)(1+E_{\mathfrak{m}_t(\nu)}^{(2)})\prod_{s=t}^{T-1}(1+E_{\mathfrak{m}_s(\nu)}^{(4)}).
    \end{align*}
    
\end{proof}

\newpage
\section{Further Zermelo Results}\label{zermelo}
Below we display the training over time for the Zermelo problem discussed in Section \ref{numerics}. Over the 50 time steps of the problem, we display the results for times $t = 50, 44, 39, 34, 29, 24, 19, 14, 9, 4, 0$. 
\par\vspace{1em}
\noindent\begin{minipage}[b]{0.5\textwidth}
    \centering
    \includegraphics[width=\linewidth]{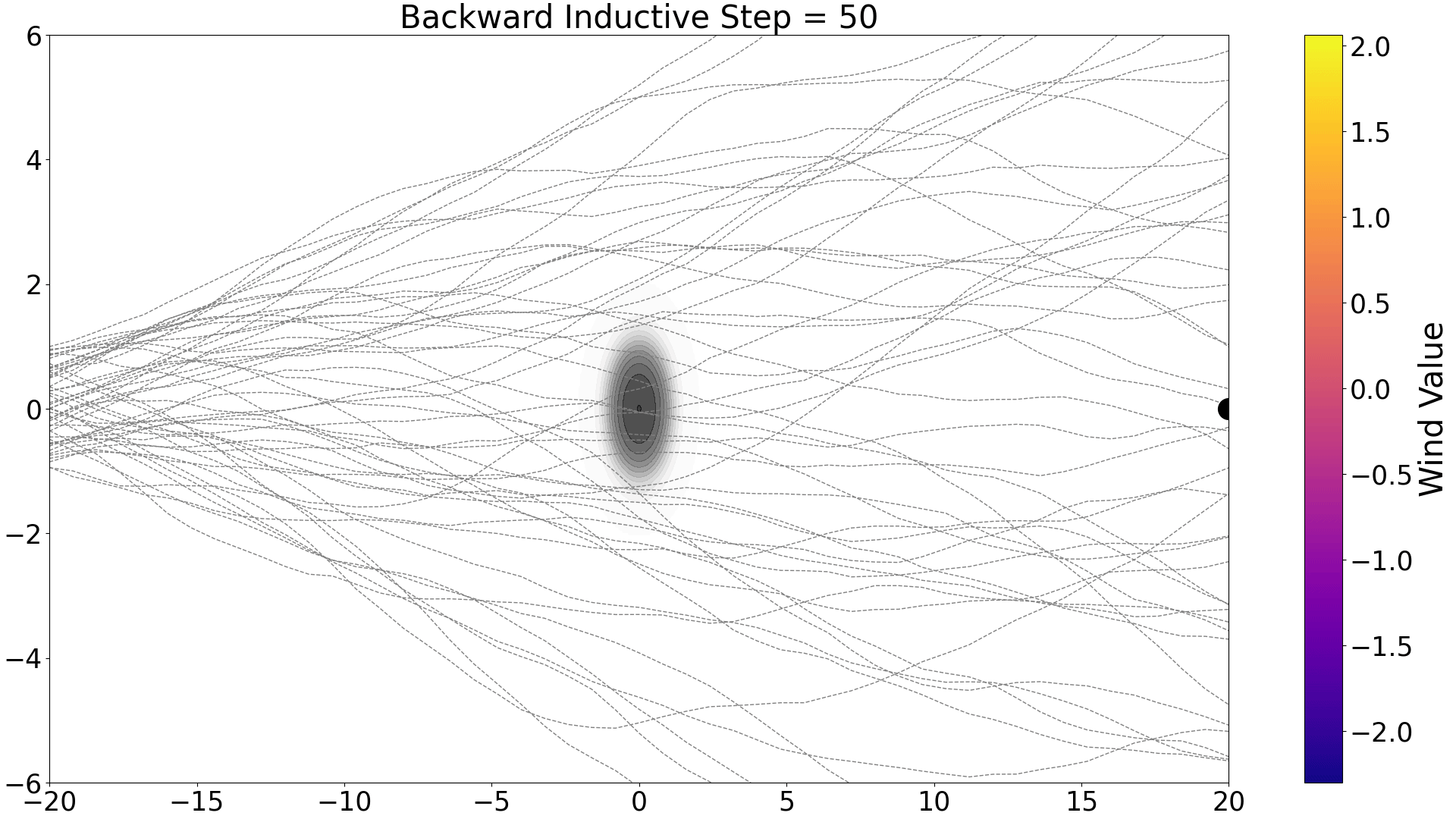}
\end{minipage}%
\begin{minipage}[b]{0.5\textwidth}
    \centering
    \includegraphics[width=\linewidth]{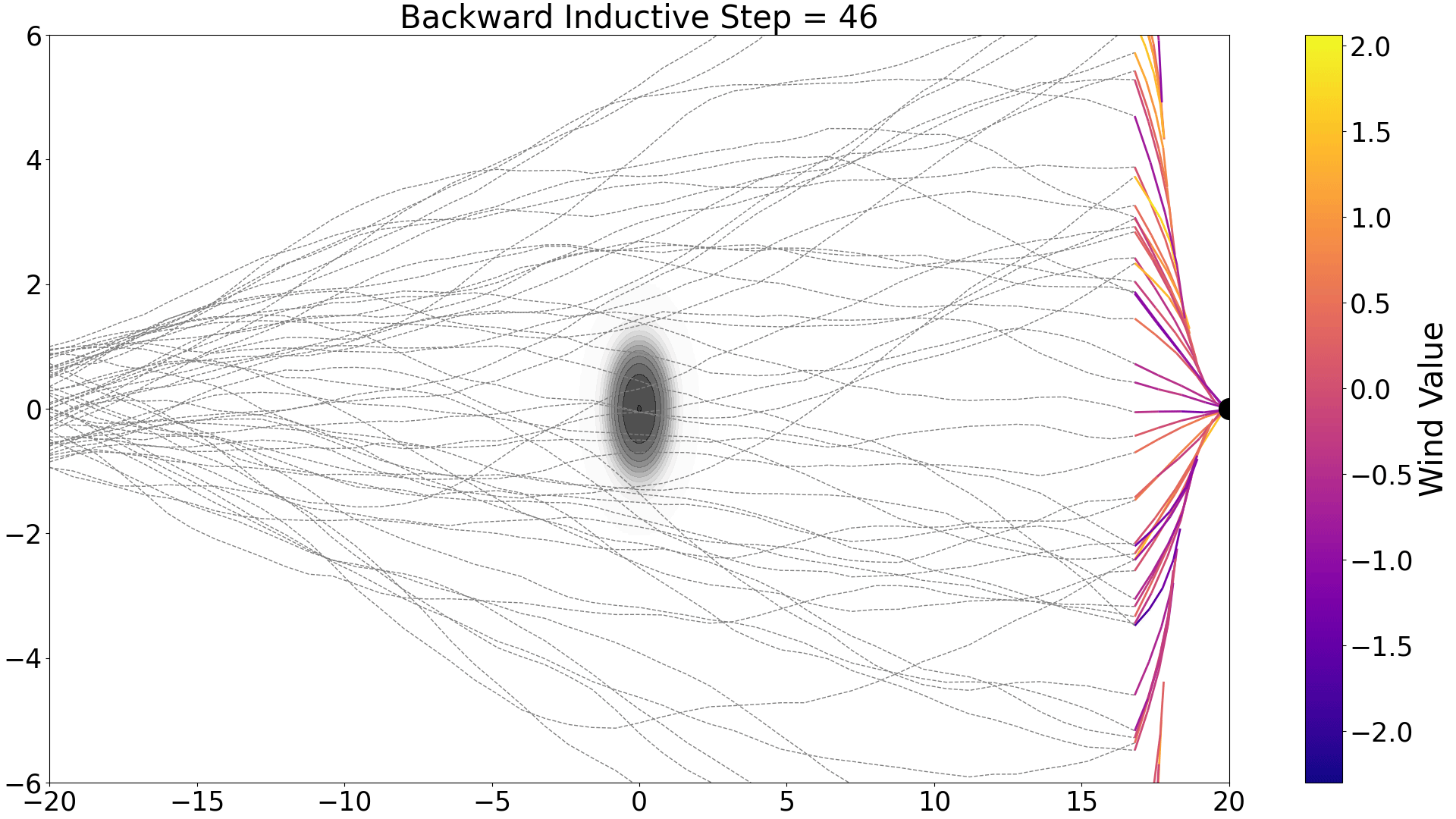}
\end{minipage}

\vspace{0.5em} 

\noindent\begin{minipage}[b]{0.5\textwidth}
    \centering
    \includegraphics[width=\linewidth]{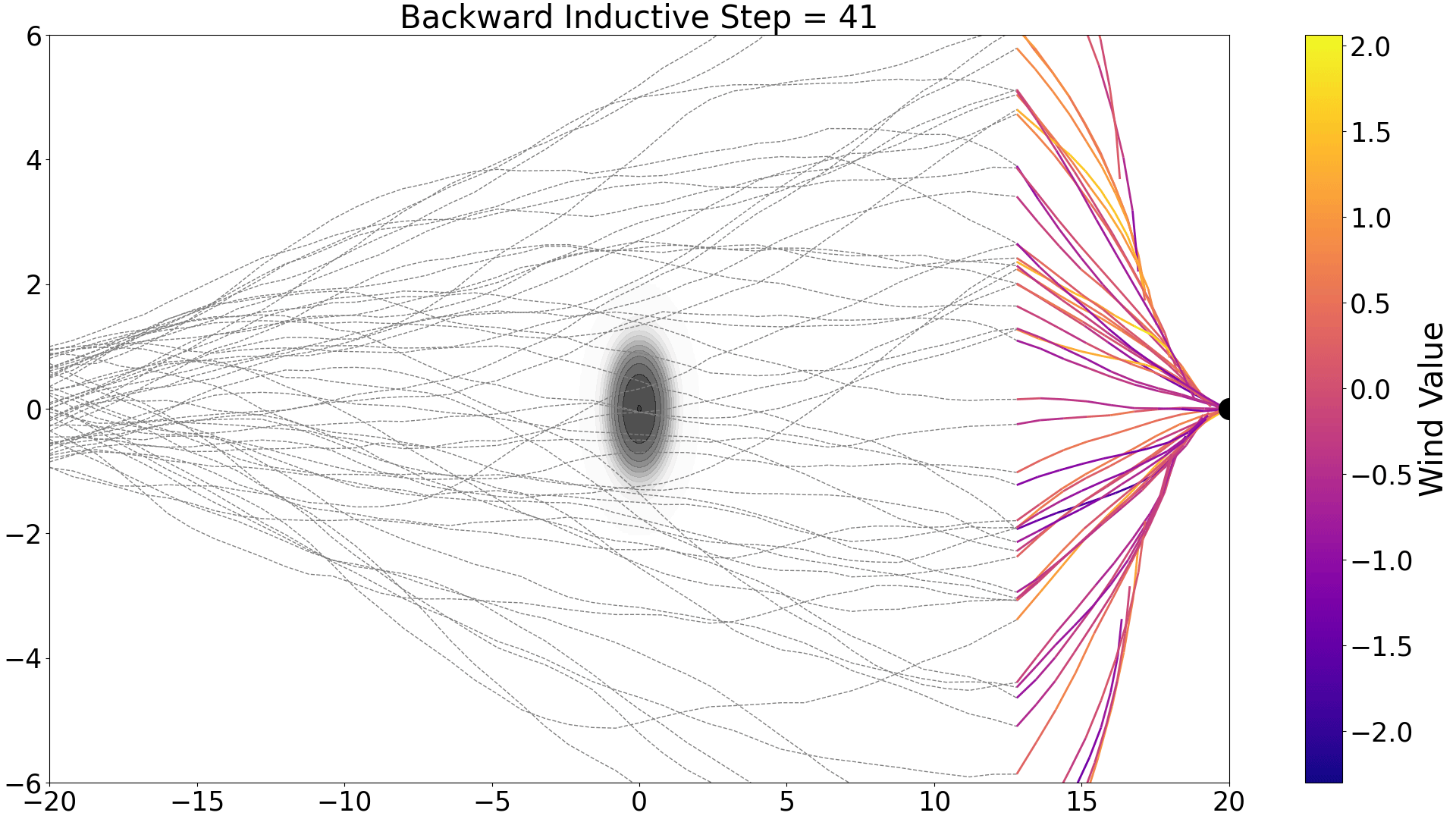}
\end{minipage}%
\begin{minipage}[b]{0.5\textwidth}
    \centering
    \includegraphics[width=\linewidth]{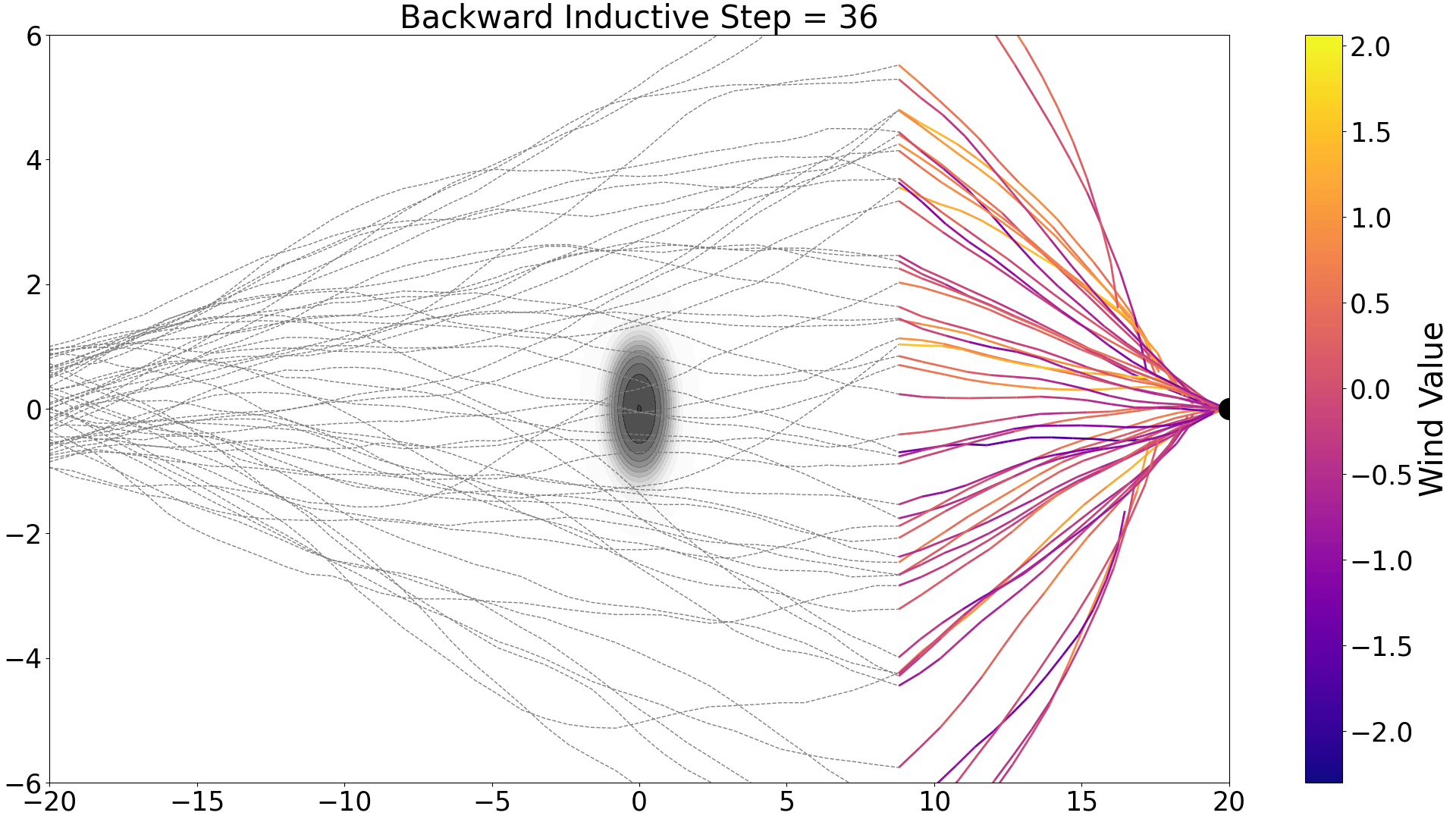}
\end{minipage}

\vspace{0.5em}

\noindent\begin{minipage}[b]{0.5\textwidth}
    \centering
    \includegraphics[width=\linewidth]{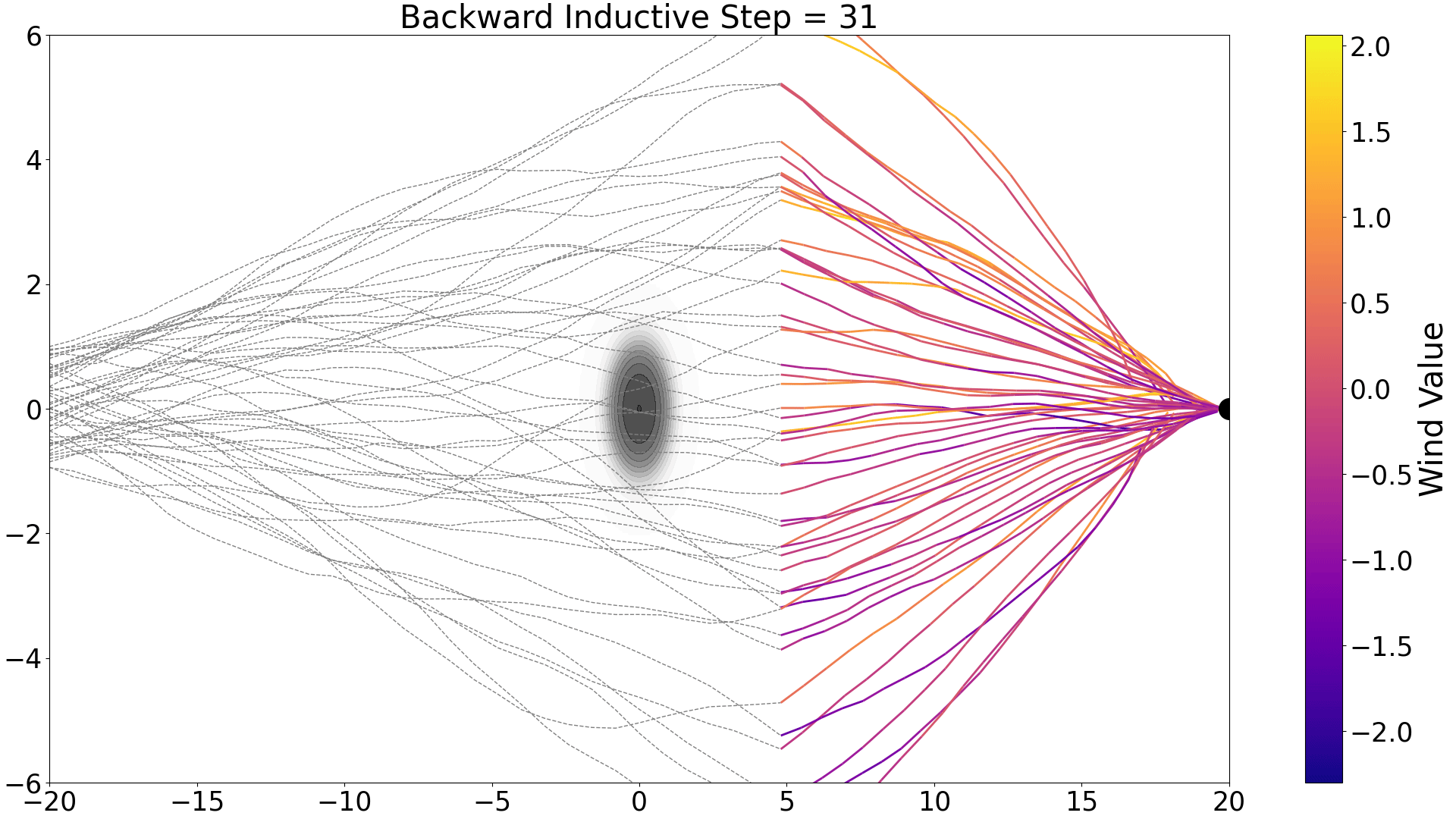}
\end{minipage}%
\begin{minipage}[b]{0.5\textwidth}
    \centering
    \includegraphics[width=\linewidth]{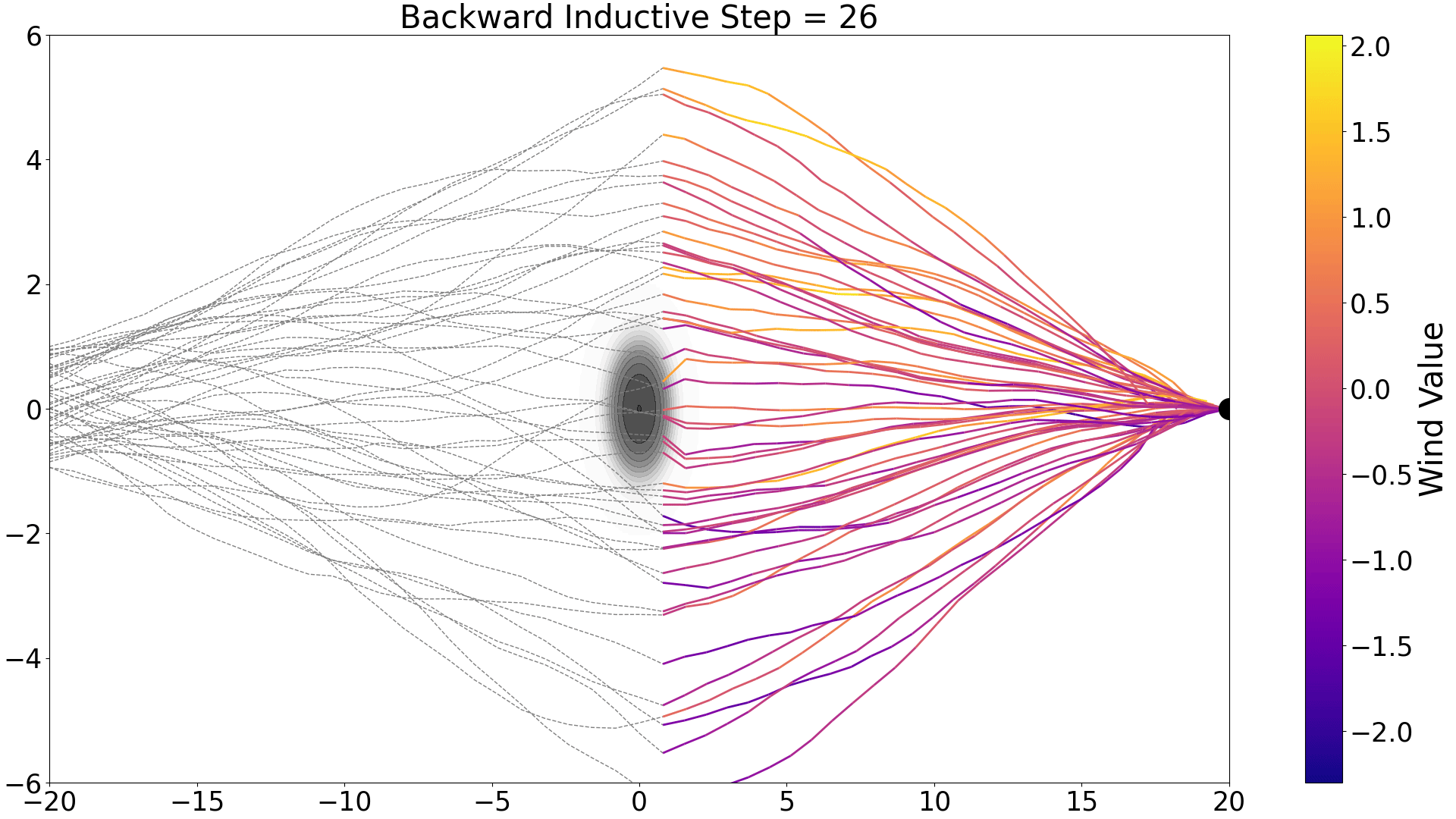}
\end{minipage}

\vspace{0.5em}

\noindent\begin{minipage}[b]{0.5\textwidth}
    \centering
    \includegraphics[width=\linewidth]{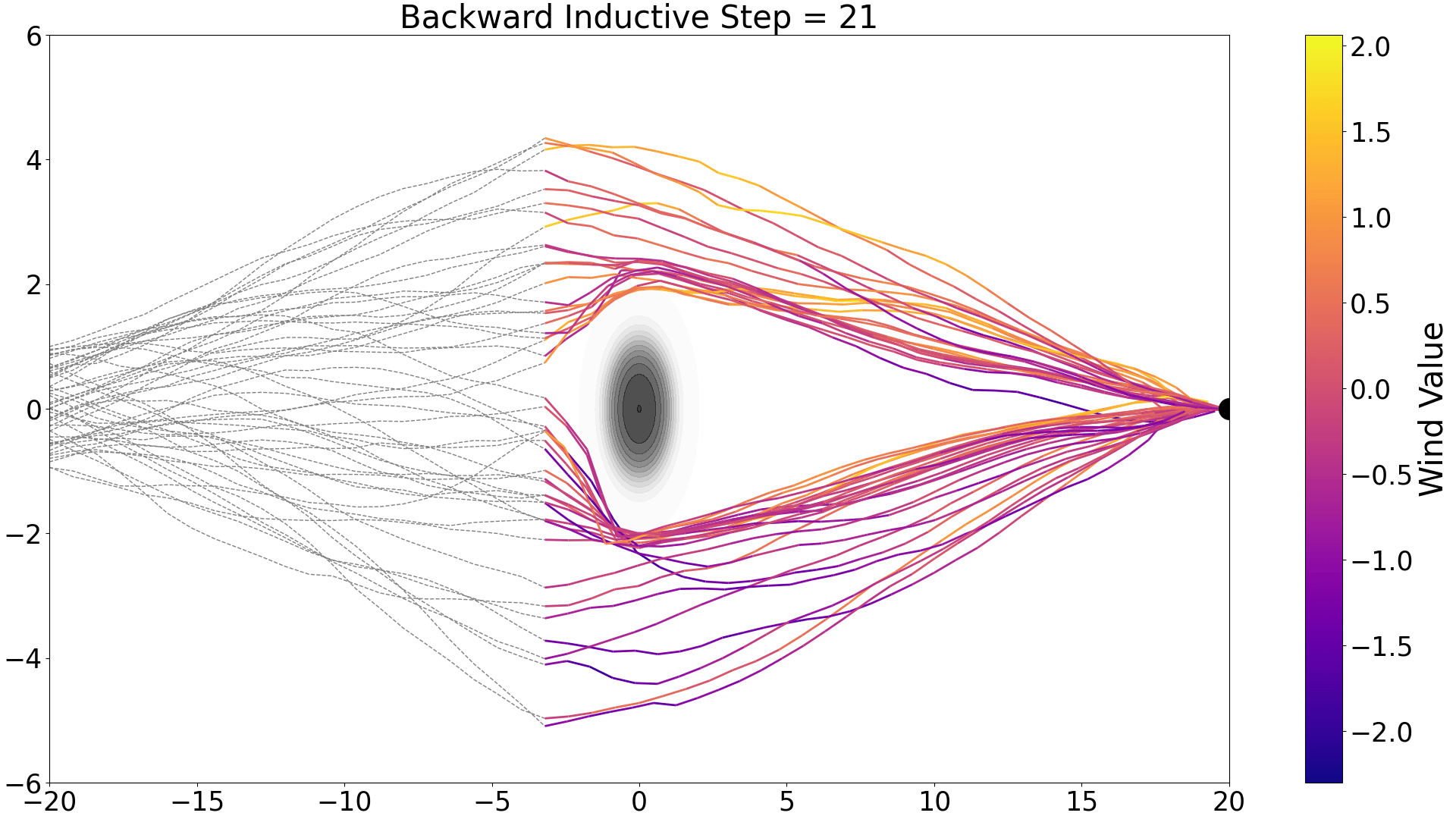}
\end{minipage}%
\begin{minipage}[b]{0.5\textwidth}
    \centering
    \includegraphics[width=\linewidth]{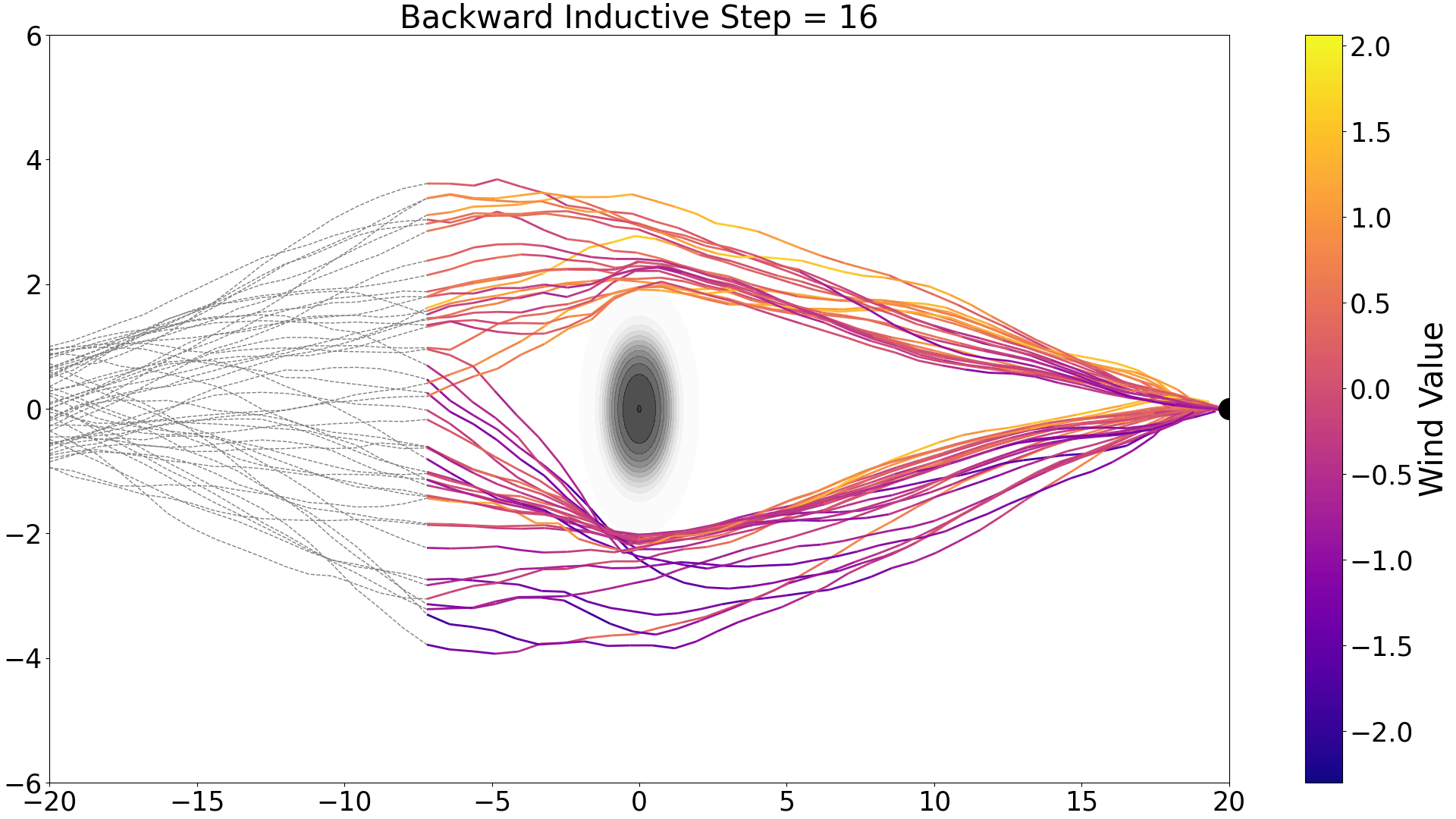}
\end{minipage}

\vspace{0.5em}

\noindent\begin{minipage}[b]{0.5\textwidth}
    \centering
    \includegraphics[width=\linewidth]{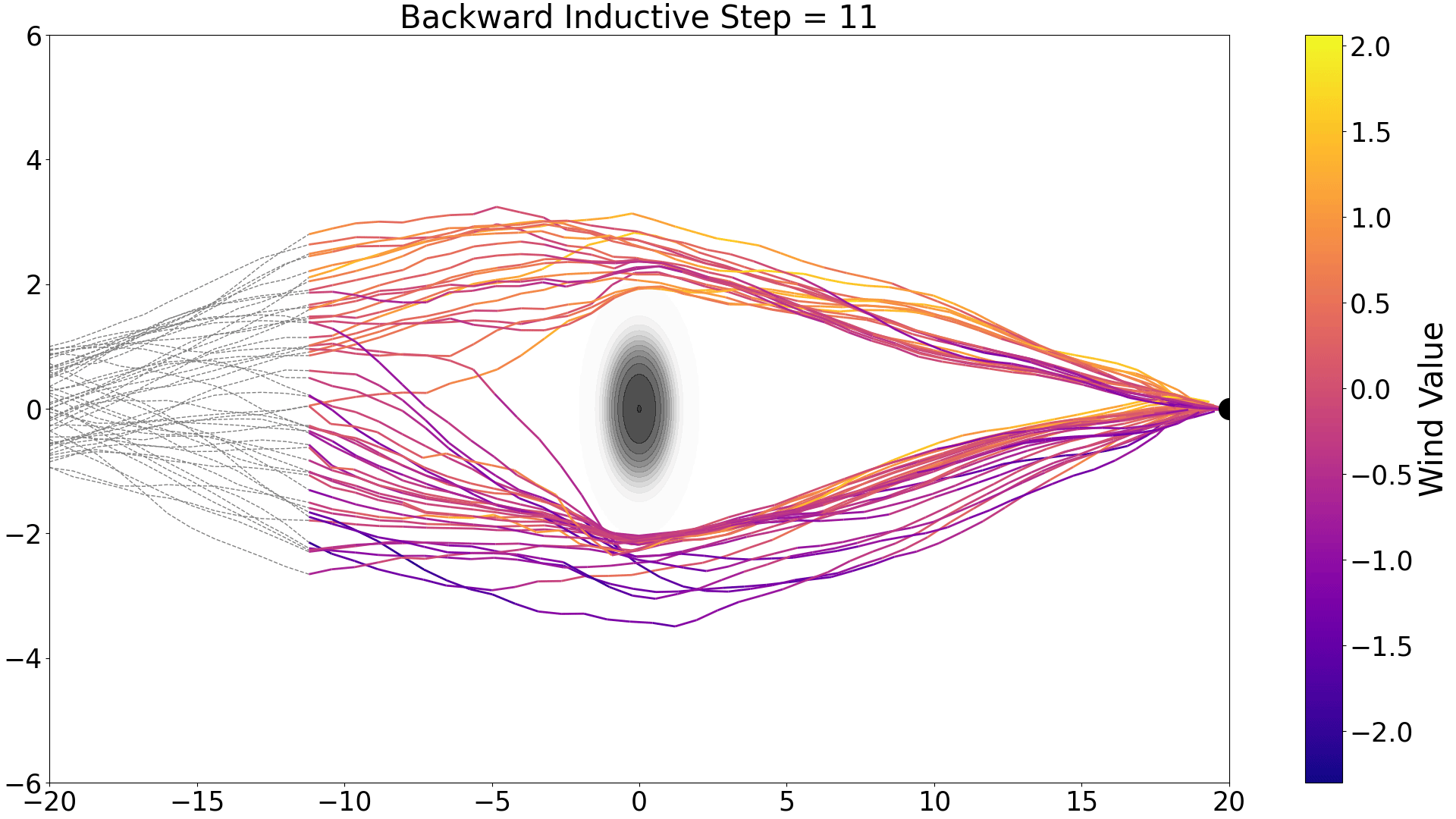}
\end{minipage}%
\begin{minipage}[b]{0.5\textwidth}
    \centering
    \includegraphics[width=\linewidth]{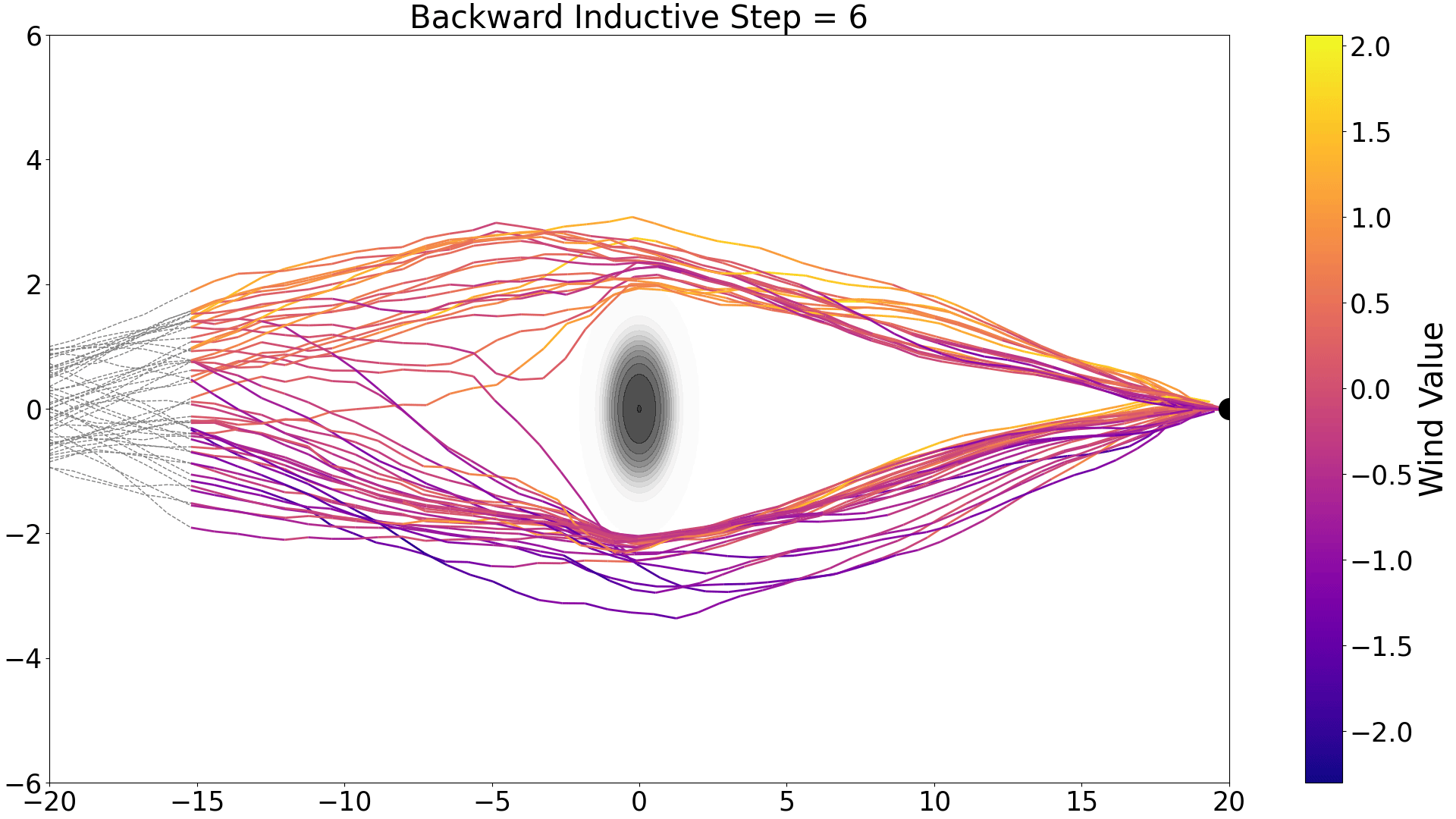}
\end{minipage}
\par\vspace{1em}
\noindent\begin{minipage}[b]{\textwidth}
    \centering
    \includegraphics[width=\linewidth]{images/gif_split/frame_50_delay-0.1s.png}
\end{minipage}%

\begin{center}
  \parbox{0.9\textwidth}{
    Figure 6: Progress of the Gibbs vector algorithm on the Zermelo navigation problem, 
    displayed for the first 50 training samples out of 100.}
\end{center}

\end{appendix}
\end{document}